\DeclareMathOperator*{\argmin}{arg min}
\DeclareMathOperator{\Span}{span}
\date{}
\title{Symplecticity-Preserving Prediction of Hamiltonian Dynamics by Generalized Kernel Interpolation}
\author[1]{Robin Herkert \thanks{\texttt{robin.herkert@ians.uni-stuttgart.de}}}
\author[2]{Tobias Ehring \thanks{\texttt{tobias.ehring@ians.uni-stuttgart.de}}}
\author[3]{Bernard Haasdonk\thanks{\texttt{haasdonk@ians.uni-stuttgart.de}}}
\affil[1,2,3]{Institute of Applied Analysis and Numerical Simulation, University of Stuttgart, Pfaffenwaldring 57,
70569 Stuttgart, Baden-Württemberg, Germany}
\begin{document}

\maketitle
\newcommand{\p}{\mathbf{p}}
\newcommand{\q}{\mathbf{q}}
\renewcommand{\P}{\mathbf{P}}
\newcommand{\Q}{\mathbf{Q}}
\newcommand{\A}{\mathbf{A}}
\newcommand{\B}{\mathbf{B}}
\newcommand{\C}{\mathbf{C}}
\newcommand{\D}{\mathbf{D}}

\newcommand{\Ham}{\mathcal{H}}
\newcommand{\R}{\mathbb{R}}
\newcommand{\N}{\mathbb{N}}
\newcommand{\openQ}[1]{{\color{red} Q: #1}}
\newtheorem{theorem}{Theorem}
\newtheorem{lemma}{Lemma}
\newtheorem{corollary}{Corollary}
\newtheorem{definition}{Definition}
\newtheorem{remark}{Remark}
\newcommand{\Hk}{H_k(\Omega)}
\newcommand{\ip}[2]{\langle #1,#2\rangle_{\Hk}}
\newcommand{\norm}[1]{\left\lVert #1\right\rVert_{\Hk}}

\begin{abstract}
\noindent In this work, a kernel-based surrogate for integrating Hamiltonian dynamics that is symplectic
by construction and tailored to large prediction horizons is proposed. The method learns a scalar
potential whose gradient enters a symplectic-Euler update, yielding a discrete flow map that exactly preserves the
canonical symplectic structure. Training is formulated as a gradient Hermite--Birkhoff
interpolation problem in a reproducing kernel Hilbert space, providing a systematic
framework for existence, uniqueness, and error control.
 Algorithmically, the symplectic kernel
predictor is combined with structure-preserving model order reduction, enabling efficient treatment of high-dimensional discretized PDEs. Numerical
tests for a pendulum, a nonlinear spring--mass chain, and a semi-discrete wave equation
show nearly algebraic greedy convergence and long-time trajectory errors reduce by
two to three orders of magnitude compared to an implicit midpoint baseline at the same
macro time step.
\end{abstract}

\textbf{Keywords:} Kernel methods, Greedy methods, Hamiltonian system, Symplectic integrator

\section{Introduction}
Many conservative physical phenomena, for instance in classical mechanics, theoretical chemistry, or molecular dynamics, can be formulated as Hamiltonian systems, whose mathematical structure encode the conservation of energy. In canonical coordinates $x = (q,p)^\top \in \R^{2n}, n \in \N$ and for $t \in I := [0,T] \subset \R$, the dynamics associated with a Hamiltonian function $\Ham \in C^{1}(\R^{2n},\R)$ are given by
\begin{align}\label{eqn:Hamsys}   
\dot x(t; x_0) = J_{2n} \nabla \Ham\left(x(t; x_0)\right), 
\qquad x(0; x_0) = x_0,
\end{align}
where
\[
J_{2n} := \begin{bmatrix} 0_n & I_n \\ -I_n & 0_n \end{bmatrix}
\]
is the canonical Poisson matrix, and $0_n, I_n \in \R^{n\times n}$ denote the zero and identity matrices of size $n$, respectively. For each initial value $x_0 \in \R^{2n}$, we denote by $x(t; x_0)$ the solution of the Hamiltonian system \eqref{eqn:Hamsys} at time $t \in I$.
As a standing example throughout this work, we consider for $n = 1$ the mathematical pendulum, which is governed by the Hamiltonian
\[
\Ham(q,p)
  = \frac{p^2}{2 m l^2} + m g l \left(1 - \cos q\right),
\]
and the corresponding equations of motion
\[
\dot q(t) = \frac{p(t)}{m l^2},
\qquad
\dot p(t) = - m g l \sin\left(q(t)\right),
\]
where $q$ denotes the angular displacement, $p$ the angular momentum, $m$ the mass, $l$ the pendulum length, and $g$ the gravitational acceleration.

\noindent Several properties follow from the Hamiltonian structure. In particular, the Hamiltonian is conserved along trajectories,
\[
\Ham\left(x(t; x_0)\right) = \Ham(x_0)
\quad\text{for all } t \ge 0,
\]
and the associated flow map $\Phi^{t} : \R^{2n} \to \R^{2n}$, defined by
\[
\Phi^{t}(x_0) := x(t; x_0),
\]
is symplectic. Here a differentiable map $\Psi : \R^{2n} \to \R^{2n}$ with Jacobian $D\Psi$ is called symplectic if
\[
\left(D\Psi(x)\right)^\top J_{2n}  D\Psi(x) = J_{2n}
\quad\text{for all } x \in \R^{2n}.
\]
\noindent
Note that, in the following, we omit the dependence of the solution $x(t; x_0)$ on the initial state $x_0$ whenever no confusion can arise and simply write $x(t)$.

\noindent
Symplectic maps enjoy several fundamental properties; see, e.g., \cite{daSilva2008}. In particular, $\det(D\Psi) = 1$, so that phase–space volume is
preserved and clouds of initial conditions are neither spuriously compressed
nor dilated. Symplectic maps are locally invertible, and the inverse of a
symplectomorphism (a diffeomorphism preserving the symplectic structure) is
again symplectic, which permits forward–backward evolution without loss of
structure. They are also closed under composition, so any finite sequence of symplectic maps
remains symplectic.
\noindent 
In practice, the flow map of a Hamiltonian system typically does not admit an explicit closed-form expression and must be approximated by a time discretization method. To preserve the structural properties of the exact symplectic flow in the discrete setting, one commonly employs symplectic integrators \cite{hairer2006}, which in applications often outperform general-purpose integrators that may lead to spurious energy gain or loss over long time intervals.
\\ 
One example of such an integrator is the symplectic Euler method,
\begin{align}\label{eq:symplEuler}
    x_{i+1} &= x_i + \Delta t  J_{2n} \nabla\Ham(q_i, p_{i+1}),
\end{align}
with $x_i = (q_i, p_i)^\top$, where the update map
\[
\Psi_\mathrm{Euler} : x_i \mapsto x_{i+1}
\]
is symplectic for every fixed $\Delta t > 0$ and every $\Ham \in C^{1}(\R^{2n},\R)$ (provided the implicit equation \eqref{eq:symplEuler} admits a solution), see \cite[Theorem~3.3]{hairer2006}. Furthermore, although symplectic integrators are typically implicit for general Hamiltonians, the symplectic Euler scheme becomes explicit when the Hamiltonian is separable, i.e., $\Ham(q,p) = T(p) + V(q)$.

\noindent Moreover, backward–error analysis shows that a symplectic integrator exactly preserves a modified Hamiltonian that is close to the original one; see \cite{hairer2006}. This explains the frequently observed bounded, typically oscillatory error in the Hamiltonian,
\[
e_\Ham(t) := \left|\Ham(x_0) - \Ham\left(x_\mathrm{approx}(t)\right)\right|, \qquad t \in \mathbb{T}_{\Delta t}:= \{k\Delta t| k \in \N_0\}
\]
over very long time horizons, where $x_\mathrm{approx}(t)$ denotes the approximate solution obtained by numerical time integration.

\noindent
The overall objective of this work is the simulation of Hamiltonian dynamics over long time horizons. However, symplectic time stepping can be computationally expensive, as stability and accuracy constraints often enforce small time steps. Therefore, we seek an approximation of the flow map that enables fast predictions for large time steps $\Delta T = K\Delta t  \gg \Delta t$. Kernel methods are particularly attractive
here because they yield an RKHS-based learning problem with a closed-form
solution (via Hermite--Birkhoff (HB) interpolation), provide direct access to derivatives
through reproducing identities, and come with rigorous approximation and convergence
theory, including greedy sparsification strategies and show good results in practice \cite{carlberg2019recovering,scholkopf2002learning,doeppel2024goal,deisenroth2013gaussian}. In the kernel setting, Hermite--Birkhoff (HB) interpolation, and more generally Hermite-type
kernel interpolation, is widely used across diverse applications, including surface
reconstruction \cite{zhong2019implicit}, PDE discretization \cite{la2008double}, image
reconstruction \cite{de2018image}, and optimal control \cite{ehring2024hermite}. In contrast,
we leverage HB interpolation to construct a prediction map that is symplectic by
construction.

\noindent In addition, the mapping defined by the kernel model should be symplectic in order to reflect the physical structure of the flow map and preserve its qualitative properties.
\noindent 
The central idea of our symplectic predictor is that, for a given $x_0 \in \R^{2n}$, we solve the implicit system
\begin{align}\label{eqn:pred}
    x_{\Delta T,\mathrm{pred}} &= x_0 + \Delta T  J_{2n} \nabla s\left(q_0, p_{\Delta T,\mathrm{pred}}\right),
\end{align}
and obtain a prediction $x_{\Delta T,\mathrm{pred}} = (q_{\Delta T,\mathrm{pred}}, p_{\Delta T,\mathrm{pred}})$, where $s : \R^{2n} \to \R$ is a learned, differentiable, scalar-valued surrogate kernel model. The resulting update map
\[
\Psi_s : x_0 \mapsto x_{\Delta T,\mathrm{pred}}
\]
is symplectic, since it reflects the symplectic Euler scheme applied to a Hamiltonian system with Hamiltonian $s$ and time step size $\Delta T$.

\noindent
To train our kernel model, we fix initial states 
\[
x_{0}^j = (q_{0}^j, p_{0}^j)^\top \in \R^{2n}, \qquad j = 1,\dots,M,
\]
and compute the corresponding time–$\Delta T$ propagated states (i.e., approximed solution after one macro time step, i.e.,  at $\Delta T$ using a symplectic integrator with small time step size $\Delta t$)
\[
x_{\Delta T}^j = (q_{\Delta T}^j, p_{\Delta T}^j)^\top
= \Phi^{\Delta T}\left(x_{0}^j\right) \qquad j = 1,\dots,M.
\]
Based on these data, we define the input–target pairs
\[
\xi_j := (q_{0}^j, p_{\Delta T}^j)^\top,
\qquad
y_j := J_{2n}^{\top}  \frac{x_{\Delta T}^j - x_{0}^j}{\Delta T},
\quad j = 1,\dots,M.
\]

\noindent The mixed argument $\xi_j = (q_{0}^j, p_{\Delta T}^j)^\top$ reflects the symplectic Euler mixed argument $(q_i, p_{i+1})$ in \eqref{eqn:pred}; with this choice, the identity
\[
\nabla s(\xi_j) = J_{2n}^{\top} \frac{x_{\Delta T}^j - x_{0}^j}{\Delta T} = y_j \qquad j = 1,\dots,M
\]
follows directly from \eqref{eqn:pred}. Interpreting $y_j$ as a (discrete-time) approximation of the gradient of a scalar potential $u : \R^{2n} \to \R$ evaluated at $\xi_j$, the learning task can be recast as a Hermite–Birkhoff (HB) interpolation problem:
\begin{align}\label{eqn:training}
    \nabla s(\xi_j) = y_j,
    \qquad j = 1,\dots,M.
\end{align}
Since the symplectic Euler rule only involves the gradient of the function $\Ham$, we only aim at good gradient approximation, while the absolute values of $s$ are of secondary importance, i.e., we do not aim to approximate the target Hamiltonian itself.
For background on HB interpolation for kernel-based models, we refer to \cite[Chapter~16.2]{wendland_2004}.

\noindent
Similar ideas have recently been studied, mainly in the context of neural networks.
Early work on structure-aware learning for Hamiltonian dynamics focused on identifying the
energy function itself. Hamiltonian Neural Networks (HNNs) \cite{greydanus2019hamiltonian}
learn a scalar Hamiltonian $\Ham_\mathrm{NN}$ from data and recover the dynamics via
\begin{equation}\label{eq:HNNs}
\dot x = J_{2n} \nabla \Ham_\mathrm{NN}(x),    
\end{equation}
thereby encoding conservation of $\Ham_\mathrm{NN}$ along the trajectories.
In \cite{DAVID2023112495}, the training procedure of HNNs is analyzed and a symplectic
training scheme is proposed that enforces discrete symplecticity via a
symplectic–integrator–based loss, yielding improved long-horizon stability and accuracy
compared to standard HNN training. A related approach for approximating time-series data by a learned Hamiltonian system is
presented in \cite{bertalan2019learning}, where a Gaussian process is employed for learning the real Hamiltonian and the dynamics are recovered similarly to \eqref{eq:HNNs}. Building on the idea of learning a Hamiltonian from data and coupling it
with symplectic time integration, \cite{Chen2020Symplectic} restricts to separable
Hamiltonians, $\Ham(q,p) = T(p) + V(q)$, and performs time stepping with a symplectic
integrator. This couples a learned, physics-informed model with a geometry-preserving
discretization, improving the long-time behavior. In \cite{zhong2019symplectic}, symplectic ODE-Nets are introduced, which enforce
Hamiltonian dynamics within the network architecture to learn the underlying dynamics.
By explicitly encoding the structure, these models achieve improved generalization with
fewer training samples.\\

\noindent
A number of approaches learn the flow directly as a symplectic map, without explicitly
recovering an underlying Hamiltonian. SympNets \cite{jin2020sympnets} compose simple
symplectic building blocks (rendering the resulting map symplectic by construction) to
approximate the time stepping map. In \cite{burby2020fast}, HénonNets are developed,
which concatenate Hénon-like maps and thus offer greater architectural flexibility than
SympNets. Furthermore, \cite{chen2021data} introduces Generating Function Neural Networks
(GFNNs), which learn a generating function whose associated canonical transformation
implicitly defines a symplectic map. More recently, \cite{Horn2024} unifies and generalizes many of these approaches via
Generalized Hamiltonian Neural Networks (GHNNs), which encompass separable-HNN models
\cite{Chen2020Symplectic}, direct symplectic-map learners
\cite{jin2020sympnets,burby2020fast}, and related architectures within a common framework
of enhanced expressivity.
\noindent
Several of these recent approaches address long–time step prediction by concatenating multiple learned,
separable Hamiltonians and composing their symplectic flows to
span a large step \cite{jin2020sympnets,Horn2024}. This leverages the fact that a concatenation of symplectic maps remains symplectic. Separable Hamiltonians are popular in these NN architectures because they align with many mechanical systems, yield an explicit and symplectic update (e.g., via symplectic Euler), and thus make training both computationally efficient (since no differentiation through an inner nonlinear solver is required) and structurally well-posed.

\noindent In contrast, we learn a single, general (non-separable) Hamiltonian
$s(q,p)$ which does not need to approximate the original Hamiltonian and perform prediction over a large horizon $\Delta T$ in one implicit symplectic–Euler–type step.

\noindent
Our key contributions are as follows:
\begin{enumerate}
    \item We propose a kernel-based scheme for learning and predicting Hamiltonian dynamics that is symplectic by construction and tailored to large time horizons.
    \item We provide a detailed analysis of existence and feasibility conditions, i.e., conditions under which a function satisfying the interpolation constraints exists and is uniquely determined. Moreover, we derive convergence results for the symplectic predictor, including a convergence analysis for first-derivative Hermite–Birkhoff interpolation.
    \item We combine the kernel model with model order reduction (MOR) \cite{benner2017model,benner2020model}, rendering the approach computationally feasible also for high-dimensional problems.
    \item We present numerical experiments that demonstrate the accuracy, long-time structure preservation, and efficiency of our method.
\end{enumerate}
\noindent For our method, one might ask whether an explicit predictor could be obtained by reconstructing a single, separable learned Hamiltonian of the form
\[
s(q,p) = s_p(p) + s_q(q).
\]
In this case, the corresponding Hermite–Birkhoff interpolation problem would formally read
\begin{align*}
    \frac{q_{\Delta T}^j - q_{0}^j}{\Delta T} &= \nabla_p s_{p}\left(p_{\Delta T}^j\right), \\
    -\frac{p_{\Delta T}^j - p_{0}^j}{\Delta T} &= \nabla_q s_{q}\left(q_{0}^j\right),
\end{align*}
with $(q_{\Delta T}^j, p_{\Delta T}^j)^\top = \Phi^{\Delta T}\left(q_0^j, p_0^j\right)^\top$. 
In general, this system is not well-posed: the right-hand side of the second equation can only learn a dependence on $q_0^j$, whereas the left-hand side typically depends on both $q_0^j$ and $p_0^j$ , i.e., the dependence on $p_0^j$ in the second equation cannot be reflected by this approach (and analogously for the first equation). 
\noindent In \Cref{fig:Data} we illustrate this issue by plotting the data points
\[
\left(p_{\Delta T}^j,  \frac{q_{\Delta T}^j - q_{0}^j}{\Delta T}\right)
\quad\text{and}\quad
\left(q_0^j, -\frac{p_{\Delta T}^j - p_{0}^j}{\Delta T}\right),
\]
and observe that these data are not well-suited for an interpolation approach based on a separable Hamiltonian since for each input value there are several output values. 
For comparison, we also present the same type of plot for the data points 
\[
\left((q_0^j, p_{\Delta T}^j),  \frac{q_{\Delta T}^j - q_{0}^j}{\Delta T}\right)
\quad\text{and}\quad
\left((q_0^j,p_{\Delta T}^j), -\frac{p_{\Delta T}^j - p_{0}^j}{\Delta T}\right),
\]
and observe that, in this case, there is only one output for each input.

\begin{figure}[h]
    \centering
     \includegraphics[width=0.6\linewidth]{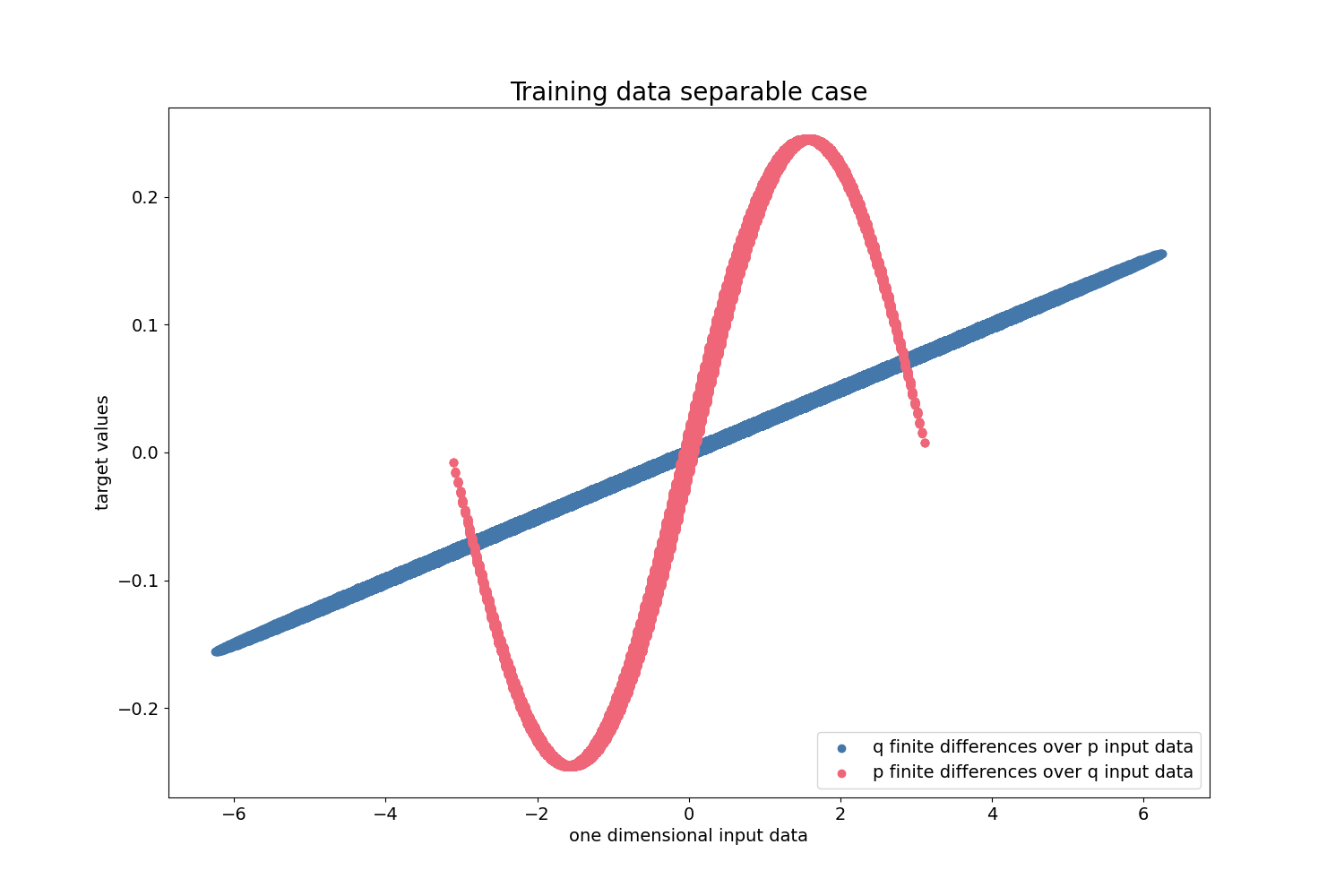}  \includegraphics[width=0.39\linewidth]{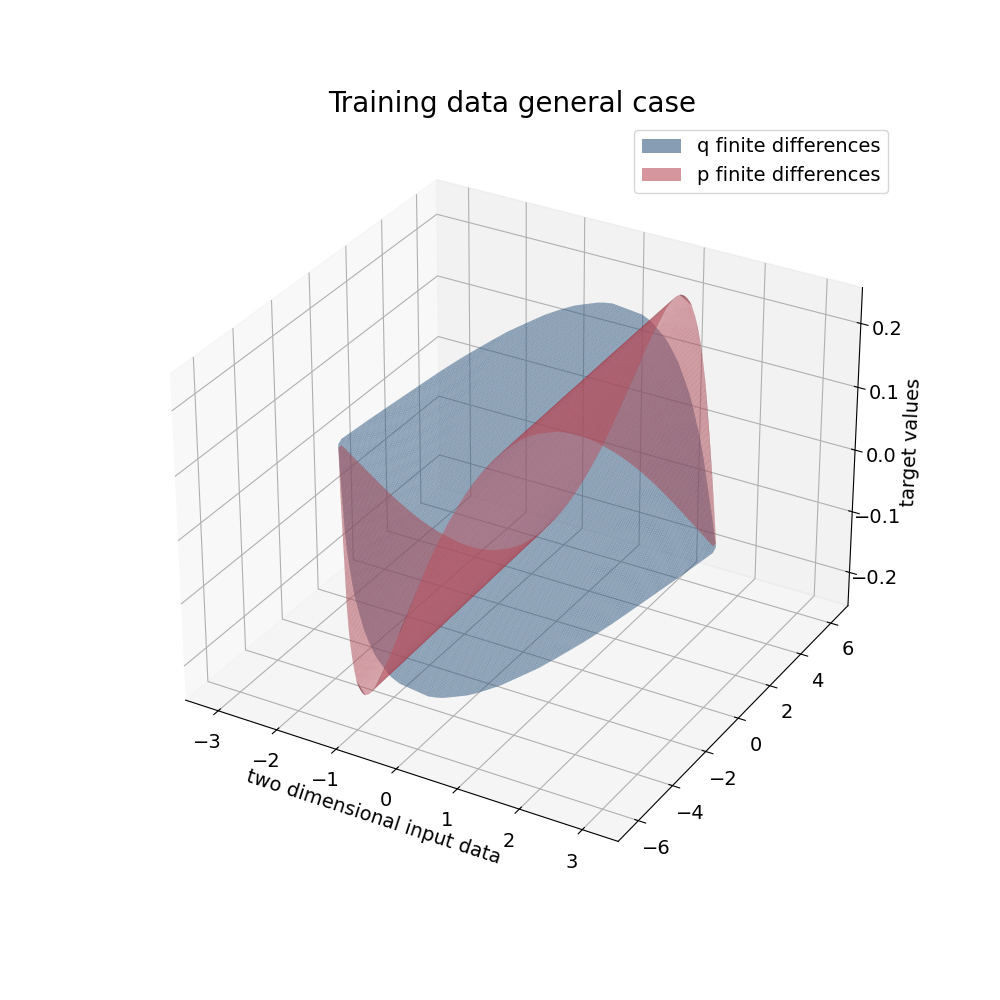}   
    \caption{Comparison of the training data for the explicit and implicit method with training data from the whole domain}
    \label{fig:Data}
\end{figure}

\noindent
The remainder of this work is structured as follows.
Section~\ref{Sec:Background} reviews kernel-based (generalized) interpolation.
Section~\ref{Sec:Analysis} investigates conditions under which an exact solution exists, which we subsequently approximate by our kernel-based scheme.
Section~\ref{Sec:Numerics} reports numerical results, and Section~\ref{Sec:Conclusion} concludes and outlines directions for future work.

\section{Background on Generalized Kernel Interpolation}\label{Sec:Background}

Let $\Omega$ be a nonempty set. A kernel is a symmetric function
$k : \Omega \times \Omega \to \R$. For a finite set of points
$X_M := \{x_1,\dots,x_M\} \subset \Omega$, the matrix
\[
K_{X_M} := \left(k(x_i,x_j)\right)_{i,j=1}^M \in \R^{M\times M}
\]
is called the Gramian matrix of $k$ (with respect to $X_M$).

\noindent We call $k$ positive definite (p.d.) if, for all $M \in \N$, all finite
sets $X_M \subset \Omega$, the Gramian matrix $K_{X_M}$ is symmetric positive semidefinite. 
We call $k$ {strictly positive definite (s.p.d.) if, for all $M \in \N$ and all sets
$X_M \subset \Omega$ consisting of pairwise distinct points, the matrix $K_{X_M}$ is symmetric positive definite.

\noindent
A reproducing kernel Hilbert space (RKHS) $H_k(\Omega)$ over $\Omega$ is a Hilbert space of
functions $f : \Omega \to \R$ in which all point evaluation functionals are continuous. 
For every RKHS there exists a function $k$ — the reproducing kernel — such that $k(\cdot,x) \in H_k(\Omega)$ and
the reproducing property
\[
f(x) = \langle f, k(\cdot,x)\rangle_{H_k(\Omega)}
\quad\text{for all } f \in H_k(\Omega),\ x \in \Omega,
\]
holds.

\noindent Moreover, the reproducing property extends to derivatives. In particular, if
$\Omega \subset \R^{2n}$ and $k \in C^{2}(\Omega \times \Omega)$, then
(for suitable $H_k(\Omega)$) the first-order partial derivative point evaluation functionals are continuous, and
\begin{equation}\label{eqn:rep_deriv}
\partial_\ell f(x)
  = \big\langle f, \partial_\ell^{(2)} k(\cdot,x)\big\rangle_{H_k(\Omega)}
  \quad\text{for all } f \in H_k(\Omega),\ x \in \Omega,\ \ell = 1,\dots,2n,
\end{equation}
where $\partial_\ell^{(2)}$ denotes the $\ell$-th partial derivative of $k$ with respect
to its second argument.

\noindent
Conversely, every p.d.\ kernel $k$ induces a unique RKHS $H_k(\Omega)$, i.e., there is a one-to-one correspondence between p.d.\ kernels and RKHSs.

\noindent In an RKHS $H_k(\Omega)$, we can formulate an abstract generalized interpolation problem of the form
\begin{align}\label{eq:min_norm_problem}
    s_M 
    = \argmin_{s \in H_k(\Omega)}
      \left\{ \| s \|_{H_k(\Omega)}  \big|  \lambda_j(s) = y_j \text{ for } j = 1,\dots,M \right\},
\end{align}
where $\lambda_1,\dots,\lambda_M \in H_k(\Omega)'$ are linearly independent, continuous linear functionals, and
$y_1,\dots,y_M$ are prescribed target values.

\noindent By the Riesz representation theorem in an RKHS, for each $\lambda_i$ there exists a unique
$v_i \in H_k(\Omega)$ such that
\[
\lambda_i(f) = \langle f, v_i\rangle_{H_k(\Omega)}
\quad\text{for all } f \in H_k(\Omega).
\]
If $k(\cdot,\cdot)$ is the reproducing kernel of $H_k(\Omega)$, then $v_i$ can be expressed explicitly as
\[
v_i(x)
  = \left(\lambda_i^{(2)} k\right)(x)
  = \lambda_i\left(k(x,\cdot)\right),
\]
i.e., $\lambda_i$ acts on the second argument of $k$. For a vector of functionals
$\Lambda_M = [\lambda_1,\dots,\lambda_M]$, we define the generalized Gramian matrix
$G_\Lambda \in \R^{M\times M}$ by
\[
(G_\Lambda)_{ij}
  := \langle v_j, v_i\rangle_{H_k(\Omega)}
  = \lambda_i^{(1)} \lambda_j^{(2)} k
  \quad (i,j = 1,\dots,M).
\]

\noindent By \cite[Theorem~16.1]{wendland_2004}, if $\lambda_1,\dots,\lambda_M$ are linearly independent on $H_k(\Omega)$, then for every $y \in \R^M$ there exists a unique minimum-norm interpolant (in the sense of \eqref{eq:min_norm_problem}), which has the form
\[
s(\cdot)
  = \sum_{j=1}^M c_j  v_j(\cdot)
  = \sum_{j=1}^M c_j  \lambda_j^{(2)} k(\cdot,\cdot),
\qquad\text{with}\quad
G_\Lambda c = y.
\]

\noindent A special case of the generalized interpolation problem is the Hermite–Birkhoff (HB) interpolation corresponding to \eqref{eqn:training}. 
For this, let $X_M = \{x_j\}_{j=1}^M \subset \Omega \subset \R^{2n}$ and, for each $j$, let
$\alpha_j \in \{1,\dots,2n\}$ be a coordinate index. We define linear functionals
\[
\lambda_{j}(f) := \lambda_{j,\alpha_j}(f) := \partial_{\alpha_j} f(x_j),
\qquad j = 1,\dots,M.
\]
These functionals are linearly independent on $H_k$ if, for $i \neq j$, either $\alpha_i \neq \alpha_j$ or $x_i \neq x_j$ and
$k$ is a strictly positive definite, translation-invariant kernel of the form
$k(x,x')=\phi(x-x')$ with $\phi\in L^1(\R^d)\cap C^{2}(\R^d)$; see
\cite[Theorem~16.4]{wendland_2004}.

\noindent Moreover, for $k \in C^{2}(\Omega \times \Omega)$, the functionals are continuous on $H_k(\Omega)$ by the derivative reproducing property \eqref{eqn:rep_deriv}.

\noindent In the HB setting, the Riesz representers and the corresponding generalized Gramian matrix entries are given by
\[
v_{j}(x) = \partial^{(2)}_{\alpha_j} k(x,x_j),\qquad
G_{i,j} = \partial^{(1)}_{\alpha_i} \partial^{(2)}_{\alpha_j} k(x_i,x_j).
\]
Hence, the interpolant takes the form
\begin{equation}\label{eqn:surr}
s(x) = \sum_{j=1}^M c_{j}  \partial^{(2)}_{\alpha_j} k(x,x_j),
\end{equation}
where the coefficients $c_{j}$, $j = 1,\dots,M$, are obtained from the linear system
\[
G_\Lambda c = y,
\]
with $c := (c_{1},\dots,c_{M})^\top$, data vector $y := (y_{1},\dots,y_{M})^\top$, and $G_\Lambda$ the generalized Gramian matrix.
If the family $\{\lambda_{j}\}_{j=1}^M$ is linearly independent, this system is uniquely solvable, and the interpolant exists uniquely \cite[Theorem~16.5]{wendland_2004}.

\subsection{Convergence rates  for gradient-Hermite-Birkhoff $f$-greedy}
\noindent
For large $M$, solving the dense linear system and evaluating
\eqref{eqn:surr} become computationally expensive and can suffer from
ill-conditioning. A common strategy is therefore to select a much smaller subset
$\Lambda_{m_{\max}} \subset \Lambda_M$ with $m_{\max} \ll M$.
Greedy algorithms—such as VKOGA~\cite{Santin2021VKOGA,wirtz2013vectorial}—
expand the active set $\Lambda_m$ iteratively according to a selection rule using the current interpolant $s_{f,\Lambda_m}$. A popular choice is the $f$-greedy rule, which starts with $\Lambda_0 = \emptyset$ and $s_{f,\Lambda_0} = 0$ and at iteration $m\geq 0$ selects 
\begin{equation}\label{eq:f-greedy}
  \lambda_{m+1}
  \in \operatorname*{argmax}_{\lambda_j \in \Lambda_M}
  \left|y_j - \lambda_j(s_{f,\Lambda_m})\right|,
  \qquad
  \Lambda_{m+1} := \Lambda_m \cup \{\lambda_{m+1}\},
\end{equation}
where the residuals at already selected functionals vanish, i.e., $y_j - \lambda_j(s_{f,\Lambda_m}) = 0, j = 1,...,m$.
Other sparsification strategies tailored to function interpolation (and not HB interpolation) include $\ell_1$-penalization~\cite{gao2010sparse}
and support vector regression (SVR)~\cite{scholkopf2002learning}.

\noindent In the context of Hermite–Birkhoff interpolation, we now formalize the
$f$-greedy strategy more precisely.

\noindent
For a finite selection 
\[
\{(x_i,\ell_i)\}_{i=1}^m \subset \Omega \times \mathcal J,
\]
with index set $\mathcal J := \{1,\dots,n\}$ (where $n$ is the ambient dimension of $\Omega$), define
\[
V_m := \Span \left\{\partial^{(2)}_{\ell_i} k(\cdot,x_i) : i = 1,\dots,m\right\},
\qquad
\Pi_m : H_k(\Omega) \to V_m
\]
to be the orthogonal projector onto $V_m$.
Given a target function $u \in H_k(\Omega)$, we set $s_m := \Pi_m u$ and $e_m := u - s_m$.

\noindent Then the $f$-greedy selection can be formulated as follows:
for $m \ge 0$, choose
\[
(x_{m+1},\ell_{m+1})
  \in \operatorname*{argmax}_{x \in \Omega,\ \ell \in \mathcal J}
  \left|\partial_\ell e_m(x)\right|,
\]
and set $s_{m+1} := \Pi_{m+1} u$.

\noindent
Greedy schemes produce sparse surrogates and come with rigorous—sometimes even optimal—
convergence guarantees~\cite{wenzel2023analysis,santin2018greedy,santin2024optimality}.
For HB interpolation, greedy convergence has been investigated in
\cite{ehring2024hermite} and, in more general functional settings, in
\cite{schaback2019greedy,albrecht2025convergence}. The application-oriented work
\cite{ehring2024hermite} establishes convergence of a target-dependent greedy Hermite
kernel scheme (based on an $f$-type selection rule) for value-function surrogates in
optimal control. In contrast, \cite{schaback2019greedy} studies well-posed operator
equations in an abstract Hilbert-space framework and relates greedy convergence rates
to Kolmogorov $n$-widths of the underlying representer set, without specializing to
the Hermite case. Complementarily, \cite{albrecht2025convergence} develops a
convergence theory for generalized kernel-based interpolation with totally bounded
sets of sampling functionals, which in particular covers HB
interpolation.

\noindent In the following, we
extend the $f$-greedy convergence rates from \cite{wenzel2023analysis,wenzel2025adaptive}
to our gradient–HB case. We begin with the following theorem, which provides a bound on the maximum derivative error.

% ---------------- Main theorem ----------------
\begin{theorem}[First-order HB bound]\label{thm:HB-mean}
Let $u \in H_k(\Omega)$, $\Omega \subset \R^{n}$, and apply the $f$-greedy algorithm of the previous subsection with index set $\mathcal J := \{1,\dots,2n\}$. Then, for every $m \ge 1$,
\begin{align}\label{thm:hb:bound}
\min_{m+1\le i\le 2m}\ \|\nabla e_i\|_{L^\infty(\Omega)}
\ \le\ \sqrt{n}  m^{-1/2} \|e_{m+1}\|_{H_k(\Omega)}
\left[\ \prod_{i=m+1}^{2m} P_i(x_{i+1},\ell_{i+1})\ \right]^{1/m},
\end{align}
where the (derivative) power function is defined by
\[
P_m(x,\ell) := \big\|(I-\Pi_m) \partial_\ell^{(2)} k(\cdot,x)\big\|_{H_k(\Omega)},
\qquad (x,\ell)\in \Omega\times\mathcal J.
\]
\end{theorem}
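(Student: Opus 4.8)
I would transfer the $f$-greedy rate estimates of \cite{wenzel2023analysis,wenzel2025adaptive} to the first-order functionals $\lambda_{x,\ell}(f)=\partial_\ell f(x)$, whose Riesz representers in $\Hk$ are $v_{x,\ell}:=\partial^{(2)}_\ell k(\cdot,x)$ by the derivative reproducing property~\eqref{eqn:rep_deriv}. Write $\varepsilon_m:=\norm{e_m}$ and, for the pair $(x_{m+1},\ell_{m+1})$ chosen at step $m$, put $P_m:=P_m(x_{m+1},\ell_{m+1})$ and $r_m:=\sup_{x\in\Omega}\|\nabla e_m(x)\|_{\ell^\infty}=\max_{x,\ell}|\partial_\ell e_m(x)|=|\partial_{\ell_{m+1}}e_m(x_{m+1})|$, the last two equalities being the definition of $\nabla$ and the $f$-greedy selection rule. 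If $r_i=0$ for some $i\in\{m+1,\dots,2m\}$ then $\nabla e_i\equiv 0$, the left-hand side of~\eqref{thm:hb:bound} is $0$, and nothing is to be shown; so I may assume $r_i>0$ on the whole window, which forces $v_{x_{i+1},\ell_{i+1}}\notin V_i$ and hence $P_i>0$ (because $r_i=|\ip{e_i}{v_{x_{i+1},\ell_{i+1}}}|$ and $e_i\perp V_i$).

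The argument rests on two identities. First, since $e_m=(I-\Pi_m)u$ is orthogonal to $V_m$, Cauchy--Schwarz gives the pointwise bound $|\partial_\ell e_m(x)|=|\ip{e_m}{(I-\Pi_m)v_{x,\ell}}|\le\varepsilon_m\,P_m(x,\ell)$ for all $(x,\ell)$. Second, because $V_{m+1}=V_m\oplus\Span\{(I-\Pi_m)v_{x_{m+1},\ell_{m+1}}\}$ adjoins to $V_m$ a single orthogonal direction, a short Pythagoras computation --- using $\ip{e_m}{(I-\Pi_m)v_{x_{m+1},\ell_{m+1}}}=\partial_{\ell_{m+1}}e_m(x_{m+1})$ --- yields the energy identity
\[
\varepsilon_m^2-\varepsilon_{m+1}^2=\frac{r_m^2}{P_m^2}.
\]
In particular $(\varepsilon_m)$ is nonincreasing, so each increment $\varepsilon_i^2-\varepsilon_{i+1}^2$ is nonnegative.

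For the conclusion I would pass to the geometric mean of the $m$ numbers $r_{m+1}^2,\dots,r_{2m}^2$, substitute $r_i^2=P_i^2(\varepsilon_i^2-\varepsilon_{i+1}^2)$, split the product, and bound the geometric mean of the increments by their arithmetic mean; a telescoping sum then controls the latter:
\begin{align*}
\min_{m+1\le i\le 2m} r_i^2
&\le\Bigl(\prod_{i=m+1}^{2m}P_i^2\Bigr)^{1/m}\,\frac1m\sum_{i=m+1}^{2m}\bigl(\varepsilon_i^2-\varepsilon_{i+1}^2\bigr)\\
&=\Bigl(\prod_{i=m+1}^{2m}P_i^2\Bigr)^{1/m}\,\frac{\varepsilon_{m+1}^2-\varepsilon_{2m+1}^2}{m}
\ \le\ \frac{\varepsilon_{m+1}^2}{m}\Bigl(\prod_{i=m+1}^{2m}P_i^2\Bigr)^{1/m}.
\end{align*}
Taking square roots gives $\min_{m+1\le i\le 2m} r_i\le m^{-1/2}\varepsilon_{m+1}\bigl(\prod_{i=m+1}^{2m}P_i\bigr)^{1/m}$, and the elementary comparison $\|\nabla e_i\|_{L^\infty(\Omega)}=\sup_x\|\nabla e_i(x)\|_{\ell^2}\le\sqrt n\,\sup_x\|\nabla e_i(x)\|_{\ell^\infty}=\sqrt n\,r_i$, valid for each $i$, supplies the prefactor $\sqrt n$ and finishes~\eqref{thm:hb:bound}.

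Most of this is bookkeeping once the two identities are established. The step I expect to cost the most care is the energy identity together with the degenerate-case analysis: one must verify that the $f$-greedy residual $r_m$, the norm drop $\varepsilon_m^2-\varepsilon_{m+1}^2$, and the power function $P_m$ are coupled exactly by the displayed formula, and that $P_m=0$ cannot occur on the window without trivializing the bound, so that every product and every application of AM--GM is over strictly positive, well-defined quantities. A minor technical point is attainment of the suprema defining $r_m$ and the $f$-greedy selection: since $k\in C^2$ the error $e_m$ lies in $C^1$ with continuous gradient, so these are genuine maxima when $\Omega$ is compact and may simply be kept as suprema otherwise, leaving every inequality above intact.
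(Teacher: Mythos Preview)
Your proposal is correct and follows essentially the same route as the paper: the paper's appendix proves the identical power-function bound and Pythagorean energy identity (its Lemmas~\ref{lem:pf} and~\ref{lem:orth}), applies AM--GM to the increments $a_i^2/b_i^2=\varepsilon_i^2-\varepsilon_{i+1}^2$ over the window $\{m{+}1,\dots,2m\}$ (its Lemma~\ref{lem:amgm}), and then passes from the geometric mean to the minimum and from $\max_\ell|\partial_\ell e_i|$ to $\|\nabla e_i\|_{L^\infty}$ via $\sqrt n$, exactly as you do. Your degenerate-case handling ($r_i=0\Rightarrow$ LHS $=0$; otherwise $P_i>0$) is in fact slightly more explicit than the paper's convention.
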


\begin{proof}

The proof proceeds similarly to the $\beta$-greedy analysis in
\cite{wenzel2023analysis,wenzel2025adaptive} (with $\beta = 1$).
\end{proof}
\noindent For completeness it is given in the appendix under
\hyperlink{proof:HB-mean}{Proof of Theorem~\ref{thm:HB-mean}}. 

\noindent In the remainder of this section, we study the convergence of the symplectic predictor. 
To this end, we introduce a type~II generating function, i.e., a generating function expressed in the mixed variables $(q_0,p_{\Delta T})$.
For background on the different generating-function types, see \cite{cline2017variational}.

\begin{definition}[Type~II generating function]\label{def:typeII}
Let $\Phi^{\Delta T}:D\to\R^{2n}$ and define
\[
\Omega := \left\{ I_1 x_0 + I_2 \Phi^{\Delta T}(x_0) : x_0 \in D \right\},
\qquad
I_1:=\begin{bmatrix} I_n & 0 \\[2pt] 0 & 0 \end{bmatrix},\quad
I_2:=\begin{bmatrix} 0 & 0 \\[2pt] 0 & I_n \end{bmatrix}.
\]
A function $S^{\Delta T}\in H_k(\Omega)\cap C^1(\Omega)$ is called a \emph{type~II generating function}
(for $\Phi^{\Delta T}$ on $\Omega$) if, for all $x_0\in D$,
\begin{align}\label{eqn:mixed-training}
    J_{2n}^{\top} \frac{\Phi^{\Delta T}(x_0)-x_0}{\Delta T}
    = \nabla S^{\Delta T}\!\left(I_1 x_0 + I_2 \Phi^{\Delta T}(x_0)\right).
\end{align}
\end{definition}
\noindent With Definition~\ref{def:typeII} in place, we establish how the convergence rate for the gradient approximation in \Cref{thm:HB-mean} propagates to a corresponding convergence rate for the symplectic predictor.

\begin{theorem}[Convergence rate for the prediction error]\label{thm:residual-to-prediction}
Let $S^{\Delta T}\in C^{2}(\Omega)$ be a type-II generating function of the time--$\Delta T$
Hamiltonian flow $\Phi^{\Delta T}$, defined on an open, convex set $\Omega\subset\mathbb{R}^{2n}$ of
mixed variables $(q_0,p_{\Delta T})$. Assume $S^{\Delta T}\in H_k(\Omega)$ and that
$s_{i_m}\in H_k(\Omega)$ is obtained by the HB $f$-greedy procedure from the previous subsection,
where we choose an index \( i_m \in \{m+1,\dots,2m\} \) such that
\[
\|\nabla e_{i_m}\|_{L^\infty(\Omega)}
=
\min_{m+1\le i\le 2m}\ \|\nabla e_i\|_{L^\infty(\Omega)}.
\]

\noindent Fix a compact set $K\subset\mathbb{R}^{2n}$ and assume:

\begin{enumerate}
\item[(i)] \textbf{Uniform solvability.}
For every $x_0\in K$ and every $m$ sufficiently large, the implicit equations
\[
y = x_0 + \Delta T J_{2n} \nabla S^{\Delta T}(I_1x_0+I_2y),
\qquad
y = x_0 + \Delta T J_{2n} \nabla s_{i_m}(I_1x_0+I_2y),
\]
admit unique solutions $y^\ast=\Phi^{\Delta T}(x_0)$ and
$y_m=x_{\Delta T,i_m}(x_0)$, respectively, and satisfy
\[
I_1x_0+I_2y^\ast\in \Omega,\qquad I_1x_0+I_2y_m\in \Omega.
\]

\item[(ii)] \textbf{Uniform Lipschitz continuity in $y$.}
There exists $L_S>0$ such that
\[
\sup_{\substack{x_0\in K\\ y:\ I_1x_0+I_2y\in \Omega}}
\bigl\|\nabla^2 S^{\Delta T}(I_1 x_0 + I_2 y)\, I_2\bigr\|_2
= L_S < \infty .
\]
\end{enumerate}
Assume that
\begin{equation}\label{eqn:deltaT}
    \Delta T < \frac{1}{L_S}.
\end{equation}

\noindent Then there exists a constant $C>0$, independent of $m$ and $x_0$, such that
\[
\sup_{x_0\in K}\ \|x_{\Delta T,i_m}(x_0)-\Phi^{\Delta T}(x_0)\|_2
\ \le\ C\,\Delta T\,\sqrt{2n}\, m^{-1/2}\, \|e_{m+1}\|_{H_k(\Omega)}
\left[\ \prod_{i=m+1}^{2m} P_i(x_{i+1},\ell_{i+1})\ \right]^{1/m}.
\]
\end{theorem}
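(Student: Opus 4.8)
The plan is to confront the two implicit relations of hypothesis~(i) directly and dispose of the resulting $\Delta T\,L_S$ contribution by a contraction-type absorption. Write $\xi^\ast := I_1 x_0 + I_2 y^\ast$ and $\xi_m := I_1 x_0 + I_2 y_m$, both of which lie in $\Omega$ by~(i), and recall that here the greedy target is $u = S^{\Delta T}$, so $e_{i_m} = S^{\Delta T} - s_{i_m}$. Subtracting the fixed-point equation for $y^\ast$ from that for $y_m$ and inserting the term $\nabla S^{\Delta T}(\xi_m)$ gives
\begin{align*}
y_m - y^\ast
&= \Delta T\, J_{2n}\bigl[\nabla s_{i_m}(\xi_m) - \nabla S^{\Delta T}(\xi^\ast)\bigr]\\
&= \underbrace{-\,\Delta T\, J_{2n}\,\nabla e_{i_m}(\xi_m)}_{\text{consistency}}
  \;+\; \underbrace{\Delta T\, J_{2n}\bigl[\nabla S^{\Delta T}(\xi_m) - \nabla S^{\Delta T}(\xi^\ast)\bigr]}_{\text{stability}} .
\end{align*}

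First I would bound the consistency term. Since $J_{2n}$ is orthogonal, $\|J_{2n}\|_2 = 1$, and since $\xi_m\in\Omega$, its Euclidean norm is at most $\Delta T\,\|\nabla e_{i_m}(\xi_m)\|_2 \le \Delta T\,\sqrt{2n}\,\|\nabla e_{i_m}\|_{L^\infty(\Omega)}$, the last step converting the $\ell^2$ norm of a $2n$-vector into the componentwise supremum used in Theorem~\ref{thm:HB-mean}. Next I would treat the stability term by the fundamental theorem of calculus: using $\xi_m - \xi^\ast = I_2(y_m - y^\ast)$,
\[
\nabla S^{\Delta T}(\xi_m) - \nabla S^{\Delta T}(\xi^\ast)
= \left(\int_0^1 \nabla^2 S^{\Delta T}\!\bigl(\xi^\ast + t(\xi_m - \xi^\ast)\bigr)\,dt\right) I_2\,(y_m - y^\ast).
\]
Convexity of $\Omega$ together with $\xi^\ast,\xi_m\in\Omega$ guarantees that the whole segment $\xi^\ast + t(\xi_m-\xi^\ast)$ stays in $\Omega$, and each such point has the form $I_1 x_0 + I_2 y$; hence hypothesis~(ii) bounds $\|\nabla^2 S^{\Delta T}(\cdot)\,I_2\|_2 \le L_S$ along the segment, so the stability term is at most $\Delta T\,L_S\,\|y_m - y^\ast\|_2$ in Euclidean norm. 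Collecting both estimates and invoking \eqref{eqn:deltaT} to ensure $1-\Delta T L_S>0$, I divide through to get
\[
\sup_{x_0\in K}\|y_m - y^\ast\|_2 \;\le\; \frac{\Delta T\,\sqrt{2n}}{1-\Delta T L_S}\,\|\nabla e_{i_m}\|_{L^\infty(\Omega)},
\]
with the constant independent of $m$ and $x_0$ since $L_S$ is the uniform bound over $K$ from~(ii) and the quantity on the right sees neither.

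Finally, because $i_m\in\{m+1,\dots,2m\}$ is chosen as a minimizer, $\|\nabla e_{i_m}\|_{L^\infty(\Omega)} = \min_{m+1\le i\le 2m}\|\nabla e_i\|_{L^\infty(\Omega)}$, which Theorem~\ref{thm:HB-mean} bounds by the right-hand side of~\eqref{thm:hb:bound}; substituting this and folding the remaining dimensional factor into the constant $C$ yields exactly the asserted estimate. I do not expect a genuine obstacle here: once Theorem~\ref{thm:HB-mean} is in hand, this is a short consistency-plus-stability absorption. The only points that need real care are the bookkeeping that hypotheses~(i)--(ii) are designed to supply — unique solvability of the interpolant's implicit step for \emph{every} $x_0\in K$ and all sufficiently large $m$ (so that $y_m$ is even defined), and the containments $\xi_m,\xi^\ast\in\Omega$ together with the segment between them staying in $\Omega$, which make $\nabla e_{i_m}(\xi_m)$ and the Hessian bound legitimate — and keeping the $\sqrt{2n}$ factors consistent with the normalization of the $L^\infty$-gradient norm used in Theorem~\ref{thm:HB-mean}.
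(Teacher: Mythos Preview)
Your proof is correct and follows essentially the same route as the paper: both subtract the two implicit relations, split the difference into a consistency term $-\Delta T J_{2n}\nabla e_{i_m}(\xi_m)$ and a stability term controlled by $\nabla^2 S^{\Delta T}$ along the segment between $\xi^\ast$ and $\xi_m$ (via the integral mean value theorem and convexity of $\Omega$), and use $\Delta T L_S<1$ to absorb the latter, arriving at the same constant $C=(1-\Delta T L_S)^{-1}$. The only cosmetic difference is that the paper packages the absorption as inverting an averaged Jacobian $A_m=I-\bar B_m$ via a Neumann series rather than your direct triangle-inequality rearrangement, and it places the $\sqrt{2n}$ conversion inside the bound of Theorem~\ref{thm:HB-mean} (interpreting $\|\nabla e_i\|_{L^\infty}$ as the sup of the Euclidean norm) rather than in the consistency estimate as you do.
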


\begin{proof}
Fix $x_0\in K$ and denote by $y^\ast=\Phi^{\Delta T}(x_0)$ and
$y_m=x_{\Delta T,i_m}(x_0)$ the exact and approximate predictors, respectively.
Define
\[
F_m(y;x_0)
:= y-x_0-\Delta T J_{2n} \nabla s_{i_m}(I_1x_0+I_2y),
\qquad
F(y;x_0)
:= y-x_0-\Delta T J_{2n} \nabla S^{\Delta T}(I_1x_0+I_2y).
\]

\noindent By construction,
\[
0 = F_m(y_m;x_0)
= y_m-x_0-\Delta T J_{2n} \nabla s_{i_m}(I_1x_0+I_2y_m).
\]
Subtracting from $F(y_m;x_0)$ yields
\[
F(y_m;x_0)
=
-\Delta T J_{2n}
\bigl(\nabla S^{\Delta T}-\nabla s_{i_m}\bigr)(I_1x_0+I_2y_m)
=
-\Delta T J_{2n}\nabla e_{i_m}(I_1x_0+I_2y_m).
\]
Hence, using $\|J_{2n}\|_2=1$,
\begin{equation}\label{eqn:boundFym}
\|F(y_m;x_0)\|_2
\le
\Delta T\,\|\nabla e_{i_m}\|_{L^\infty(\Omega)}.
\end{equation}

\noindent Since $F(y^\ast;x_0)=0$, the integral form of the multivariate mean value theorem gives
\[
F(y_m;x_0)
=
\int_0^1
D_y F\bigl(y^\ast+\theta (y_m-y^\ast);x_0\bigr)\,d\theta\,(y_m-y^\ast).
\]
\noindent with
\[
D_yF(y;x_0)
:= I_{2n} - \Delta T J_{2n} \nabla^2 S^{\Delta T}(I_1 x_0 + I_2 y) I_2.
\]
Define
\[
A_m(x_0)
:=
\int_0^1
D_y F\bigl(y^\ast+\theta (y_m-y^\ast);x_0\bigr)\,d\theta.
\]

\noindent 
Set
\[
B(y;x_0)
:= \Delta T\,J_{2n}\,\nabla^2 S^{\Delta T}(I_1x_0+I_2y)\,I_2,
\textrm{ such that }
D_yF(y;x_0)=I_{2n}-B(y;x_0).
\]
Define the averaged operator
\[
\bar B_m(x_0)
:=
\int_0^1
B\bigl(y^\ast+\theta (y_m-y^\ast);x_0\bigr)\,d\theta,
\textrm{ such that }
A_m(x_0)=I_{2n}-\bar B_m(x_0).
\]

\noindent Since $\Omega$ is convex and
$I_1x_0+I_2y^\ast,\ I_1x_0+I_2y_m\in\Omega$, the entire segment lies in $\Omega$.
By assumption~(ii),
\[
\|\bar B_m(x_0)\|_2
\le
\Delta T\,L_S < 1.
\]
Therefore $A_m(x_0)$ is invertible and admits the Neumann series
\[
A_m(x_0)^{-1}
=
\sum_{k=0}^\infty \bar B_m(x_0)^k,
\]
\noindent and further
\[
\|A_m(x_0)^{-1}\|_2
\le
\frac{1}{1-\Delta T L_S}
=: C_{\mathrm{inv}}.
\]

\noindent Consequently,
\[
y_m-y^\ast
=
A_m(x_0)^{-1}F(y_m;x_0),
\]
\noindent
and
\[
\|y_m-y^\ast\|_2
\le
C_{\mathrm{inv}}\,\|F(y_m;x_0)\|_2.
\]
Combining with the bound \eqref{eqn:boundFym} and Theorem~\ref{thm:HB-mean} yields
\[
\sup_{x_0\in K}\ \|x_{\Delta T,i_m}(x_0)-\Phi^{\Delta T}(x_0)\|_2
\ \le\ C\,\Delta T\,\sqrt{2n}\, m^{-1/2}\, \|e_{m+1}\|_{H_k(\Omega)}
\left[\ \prod_{i=m+1}^{2m} P_i(x_{i+1},\ell_{i+1})\ \right]^{1/m},
\]
with $C:=C_{\mathrm{inv}}$.
\end{proof}

\noindent In summary, the gradient--HB $f$-greedy strategy yields a quantitative decay bound for the maximal derivative error (\Cref{thm:HB-mean}), and \Cref{thm:residual-to-prediction} shows that this decay translates to a uniform convergence rate for the symplectic predictor on compact subsets.

\section{On the Existence of the Target Function}\label{Sec:Analysis}
\noindent
To obtain provable convergence guarantees, we assume that the generating function belongs to the RKHS $H_k(\Omega)$.
In our setting, however, it is not a priori clear under which conditions there even exists a differentiable
function $S^{\Delta T}$ that satisfies the interpolation constraint
\begin{align}\label{eqn:mixed-training}
    J_{2n}^{\top} \frac{\Phi^{\Delta T}(x_0)-x_0}{\Delta T}
    = \nabla S^{\Delta T} \big(I_1 x_0 + I_2 \Phi^{\Delta T}(x_0)\big).
\end{align}

\noindent When such a function $S^{\Delta T}$ exists and, in addition, $S^{\Delta T} \in H_k(\Omega)$ on the mixed-data domain
\[
\Omega := \left\{ I_1 x_0 + I_2 \Phi^{\Delta T}(x_0) : x_0 \in D \subset \R^{2n} \right\},
\] 
the HB interpolation convergence analysis from the previous section applies.
Existence of such an $S^{\Delta T}$ is not automatic: the discrete flow increment must be representable as
the gradient of a scalar function of the mixed variables $(q_0,p_{\Delta T})$.
We make this precise and provide verifiable conditions in this section. 
First, we provide a general statement with abstract conditions on the flow map
$\Phi^{\Delta T}$ under which a generating function exists. Since the proof of
the following theorem involves locally inverting the flow map, the variables
$q,p$ become functions of the output variables. To indicate whether a symbol
is treated as a function or as a variable, we display functions in bold
throughout this section.

\begin{theorem}[Existence of a generating function]
\label{thm:genInt}
Let $\Ham \in C^2(\Omega)$, $\Omega \subset \R^{2n}$, and let
$\Phi^{\Delta T} : \Omega \to \Phi^{\Delta T}(\Omega)$
be the flow map for a fixed $\Delta T > 0$.
Write \footnote{Note that, for brevity, we suppress the explicit dependence on $\Delta T$ in
$\mathbf Q,\mathbf P$ and their derivatives
whenever no confusion can arise.}
\[
\Phi^{\Delta T}(q,p)
  = \left(\mathbf Q(q,p), \mathbf P(q,p)\right),
\]
and denote the Jacobian of $\Phi^{\Delta T}$ with respect to $(q,p)$ by
\[
D\Phi^{\Delta T}(q,p)
=
\begin{pmatrix}
\frac{\partial}{\partial q} \mathbf Q  & \frac{\partial}{\partial p} \mathbf Q  \\[1pt]
\frac{\partial}{\partial q} \mathbf P  & \frac{\partial}{\partial p} \mathbf P  
\end{pmatrix}
=:
\begin{pmatrix}
\mathbf A(q,p) & \mathbf B(q,p)\\[2pt]
\mathbf C(q,p) & \mathbf D(q,p)
\end{pmatrix},
\]
where $\mathbf A,\mathbf B,\mathbf C,\mathbf D : \Omega \to \R^{n\times n}$.

\noindent Fix an open set $U \subset \Omega$ such that:
\begin{itemize}
\item[(i)] $\mathbf D(q,p)$ is invertible for all $(q,p) \in U$;
\item[(ii)] The map
\[
\boldsymbol\Psi : U \to W,\qquad
\boldsymbol\Psi(q,p) := \left(q,\mathbf P(q,p)\right),
\]
is a diffeomorphism onto a simply connected open set $W \subset \R^{2n}$.
\end{itemize}
Then there exist functions $\mathbf p, \mathbf Q \in C^1(W,\R^n)$, unique up to an additive
constant in $S$, such that, if we view them as functions of $(q,P) \in W$ via
\[
(q,P) = \boldsymbol\Psi(q,p),\qquad
(Q,P) = \Phi^{\Delta T}(q,p),
\]
i.e.\ $(q,p) = \boldsymbol\Psi^{-1}(q,P)$ and $(Q,P) = \Phi^{\Delta T}(q,\mathbf p(q,P))
= (\mathbf Q(q,P),P)$, then
\[
\mathbf p(q,P) = \partial_q S(q,P),\qquad
\mathbf Q(q,P) = \partial_P S(q,P).
\]

\noindent Moreover, 
the type–II generating function 
\[
S^{\Delta T}(q,P) := \frac{1}{\Delta T}\left(S(q,P) - q^\top P\right)
\]
satisfies, for all $x = (q,p) \in U$,
\[
J_{2n}^\top \frac{\Phi^{\Delta T}(x) - x}{\Delta T}
= \nabla_{(q,P)} S^{\Delta T}\left(I_1 x + I_2 \Phi^{\Delta T}(x)\right).
\]

\begin{proof}
Recall $\Psi(q,p) := (q,\P(q,p))$ and
\[
D\Psi(q,p)
= \begin{pmatrix}
\frac{\partial}{\partial q}  q & \frac{\partial}{\partial p} q\\[1pt]
\frac{\partial } {\partial q} \P& \frac{\partial }{\partial p}\P
\end{pmatrix}
=
\begin{pmatrix}
I_n & 0 \\
\C(q,p) & \D(q,p)
\end{pmatrix}.
\]
By assumption (i), $\D(q,p)$ is invertible on $U$, so $D\Psi(q,p)$ is invertible and, by the
Implicit Function Theorem, $\Psi$ is locally invertible on $U$. Hence there exists a $C^1$
function $\p = \p(q,P)$ on $W$ such that
\[
\P\left(q,\p(q,P)\right) = P.
\]
We then define
\[
\Q(q,P) := \Q\left(q,\p(q,P)\right).
\]

\noindent
Since $\Phi^{\Delta T}$ is symplectic, we have
\[
(D\Phi^{\Delta T}(q,p))^\top J_{2n} D\Phi^{\Delta T}(q,p) = J_{2n}.
\]
Writing
\[
D\Phi^{\Delta T}(q,p)
=
\begin{pmatrix}
\A & \B\\[1pt]
\C & \D
\end{pmatrix},
\]
and using the symplecticity condition this implies
\begin{align*}
\begin{pmatrix}0 & I_n\\ -I_n & 0\end{pmatrix} = J_{2n} = (D\Phi^{\Delta T})^\top J_{2n} D\Phi^{\Delta T}
&=
\begin{pmatrix}\A^\top & \C^\top\\ \B^\top & \D^\top\end{pmatrix}
\begin{pmatrix}0 & I_n\\ -I_n & 0\end{pmatrix}
\begin{pmatrix}\A & \B\\ \C & \D\end{pmatrix}  \\
&=
\begin{pmatrix}
\A^\top \C - \C^\top \A & \A^\top \D - \C^\top \B\\[2pt]
\B^\top \C - \D^\top \A & \B^\top \D - \D^\top \B
\end{pmatrix},
\end{align*}
holding pointwise.

\noindent
With $\D$ invertible, these identities imply
\begin{equation}\label{eqn:mateqn2}
\B\D^{-1} = (\B\D^{-1})^\top,\qquad
\A - \B\D^{-1}\C = \D^{-\top},\qquad
\D^{-1}\C = (\D^{-1}\C)^\top.
\end{equation}
Indeed:
\begin{enumerate}
    \item  From $\B^\top \D = \D^\top \B$, multiplying on the right by $\D^{-1}$ and on the left by
  $\D^{-\top}$ gives
  \[
  \D^{-\top} \B^\top = \B\D^{-1},
  \]
  i.e., $\B\D^{-1}$ is symmetric.

\item From $\B^\top \C - \D^\top \A = -I_n$, multiplying on the left by $\D^{-\top}$ yields
  \[
  \D^{-\top}\B^\top \C - \A = -\D^{-\top}.
  \]
  Using $\D^{-\top}\B^\top = (\B\D^{-1})^\top = \B\D^{-1}$ (by symmetry of $\B\D^{-1}$), we obtain
  \[
  \A = \B\D^{-1}\C + \D^{-\top}
  \quad\Longrightarrow\quad
  \A - \B\D^{-1}\C = \D^{-\top}.
  \]

\item Finally, starting from $\A^\top\C = \C^\top\A$ and inserting
  $\A = \D^{-\top} + \B\D^{-1}\C$ gives
  \begin{align*}
  \A^\top \C
  &= (\D^{-\top} + \B\D^{-1}\C)^\top \C
   = \D^{-1}\C + \C^\top \D^{-\top} \B^\top \C,\\
  \C^\top \A
  &= \C^\top(\D^{-\top} + \B\D^{-1}\C)
   = \C^\top \D^{-\top} + \C^\top \B\D^{-1}\C.
  \end{align*}
  Since $\D^{-\top}\B^\top = \B\D^{-1}$, the second terms agree, and we conclude
  \[
  \D^{-1}\C = \C^\top \D^{-\top}
  \quad\Longleftrightarrow\quad
  (\D^{-1}\C)^\top = \D^{-1}\C.
  \]

\end{enumerate}
\noindent
By the local invertibility of $\Psi$, there is a local solution $\p = \p(q,P)$ of
$\P(q,p) = P$, i.e.,
\[
\P\left(q,\p(q,P)\right) = P.
\]
We recall the definition
\[
\Q(q,P) = \Q\left(q,\p(q,P)\right),
\]
and differentiate the identities
\[
\P\left(q,\p(q,P)\right) = P,\qquad
\Q\left(q,\p(q,P)\right) = Q
\]
with respect to $q$ and $P$.

\noindent From $\P(q,\p(q,P)) = P$ we obtain
\begin{align*}
\partial_q \P + \partial_p \P \partial_q \p &= 0 
\quad\Longrightarrow\quad \C + \D \frac{\partial \p}{\partial q} = 0
\quad\Longrightarrow\quad \frac{\partial \p}{\partial q} = -\D^{-1}\C,\\[2pt]
\partial_p \P \partial_P \p &= I_n 
\quad\Longrightarrow\quad \D \frac{\partial \p}{\partial P} = I_n
\quad\Longrightarrow\quad \frac{\partial \p}{\partial P} = \D^{-1}.
\end{align*}

\noindent From $\Q(q,\p(q,P)) = Q$ we get
\begin{align*}
\partial_q \Q + \partial_p \Q \partial_q \p
&= \A + \B\left(-\D^{-1}\C\right)
= \A - \B\D^{-1}\C
= \frac{\partial \Q}{\partial q},\\[2pt]
\partial_p \Q \partial_P \p
&= \B\D^{-1}
= \frac{\partial \Q}{\partial P}.
\end{align*}
Collecting these four blocks yields
\[
\frac{\partial(\p,\Q)}{\partial(q,P)} =
\begin{pmatrix}
-\D^{-1}\C & \D^{-1}\\[2pt]
\A-\B\D^{-1}\C & \B\D^{-1}
\end{pmatrix}.
\]

\noindent
Using the identities in \eqref{eqn:mateqn2}, this Jacobian satisfies
\[
\frac{\partial \p}{\partial q}
= \left(\frac{\partial \p}{\partial q}\right)^\top,\qquad
\frac{\partial \Q}{\partial P}
= \left(\frac{\partial \Q}{\partial P}\right)^\top,\qquad
\frac{\partial \p}{\partial P}
= \left(\frac{\partial \Q}{\partial q}\right)^\top
\quad\left(\text{since }(\A-\B\D^{-1}\C)^\top = \D^{-1}\right).
\]
These are precisely the symmetry conditions that are necessary for the existence of a scalar
potential $S$ with $\p = \partial_q S$ and $\Q = \partial_P S$.

\noindent
We now construct such a potential. Fix a reference point $(q_0,P_0) \in W$ and define
\[
S(q,P) := \mathcal I_1(q) + \mathcal I_2(q,P),
\]
with
\[
\mathcal I_1(q) := \int_0^1 \p\left(q_0+\tau(q-q_0),P_0\right)^\top (q-q_0) d\tau,
\quad
\mathcal I_2(q,P) := \int_0^1 \Q\left(q,P_0+\tau(P-P_0)\right)^\top (P-P_0) d\tau.
\]
Clearly $S(q_0,P_0) = 0$.

\noindent
For $\mathcal I_1$, define
\[
g(\tau) := \tau \p\left(q_0+\tau(q-q_0),P_0\right).
\]
Then
\[
g'(\tau)
= \p\left(q_0+\tau(q-q_0),P_0\right)
  + \tau (\partial_q \p)\left(q_0+\tau(q-q_0),P_0\right)^\top (q-q_0),
\]
and thus
\begin{align*}
\partial_q \mathcal I_1(q)
&= \int_0^1 \left[\p\left(q_0+\tau(q-q_0),P_0\right)
 + \tau (\partial_q \p)\left(q_0+\tau(q-q_0),P_0\right)^\top (q-q_0)\right] d\tau \\
&= \int_0^1 g'(\tau) d\tau
= g(1)-g(0)
= \p(q,P_0).
\end{align*}

\noindent
For $\mathcal I_2$, we note that
\begin{align*}
\partial_q\left(\Q\left(q,P_0+\tau(P-P_0)\right)^\top (P-P_0)\right)
&= \left[(\partial_q \Q)\left(q,P_0+\tau(P-P_0)\right)\right]^\top (P-P_0) \\
&= (\partial_P \p)\left(q,P_0+\tau(P-P_0)\right)(P-P_0),
\end{align*}
where we used the relation $(\partial_q \Q)^\top = \partial_P \p$.
Furthermore,
\[
\frac{d}{d\tau} \p\left(q,P_0+\tau(P-P_0)\right)
= (\partial_P \p)\left(q,P_0+\tau(P-P_0)\right)(P-P_0),
\]
so
\begin{align*}
\partial_q \mathcal I_2(q,P) 
&= \int_0^1 (\partial_P \p)\left(q,P_0+\tau(P-P_0)\right)(P-P_0) d\tau \\
&= \int_0^1 \frac{d}{d\tau} \p\left(q,P_0+\tau(P-P_0)\right) d\tau \\
&= \p(q,P)-\p(q,P_0).
\end{align*}

\noindent
Combining the two terms, we obtain
\[
\partial_q S(q,P)
= \partial_q \mathcal I_1(q) + \partial_q \mathcal I_2(q,P)
= \p(q,P_0)+\left(\p(q,P)-\p(q,P_0)\right)
= \p(q,P).
\]

\noindent
Since $\mathcal I_1$ does not depend on $P$, we have $\partial_P \mathcal I_1 = 0$.
For $\mathcal I_2$, define
\[
h(\tau) := \tau \Q\left(q,P_0+\tau(P-P_0)\right).
\]
Then
\[
h'(\tau)
= \Q\left(q,P_0+\tau(P-P_0)\right)
  + \tau (\partial_P \Q)\left(q,P_0+\tau(P-P_0)\right)(P-P_0),
\]
and hence
\begin{align*}
\partial_P \mathcal I_2(q,P)
&= \int_0^1 \left[(\partial_P \Q)\left(q,P_0+\tau(P-P_0)\right) \tau(P-P_0)
       + \Q\left(q,P_0+\tau(P-P_0)\right)\right] d\tau \\
&= \int_0^1 h'(\tau) d\tau
 = h(1)-h(0)
 = \Q(q,P).
\end{align*}
Thus
\[
\partial_P S(q,P)
= \partial_P \mathcal I_1(q) + \partial_P \mathcal I_2(q,P)
= \Q(q,P).
\]
Consequently,
\[
\partial_q S(q,P) = \p(q,P),\qquad
\partial_P S(q,P) = \Q(q,P).
\]

\noindent
Finally, define
\[
S^{\Delta T}(q,P) := \frac{1}{\Delta T}\left(S(q,P) - q^\top P\right),
\]
which yields
\[
\nabla_q S^{\Delta T}(q,P)
= \frac{\p(q,P) - P}{\Delta T},\qquad
\nabla_P S^{\Delta T}(q,P)
= \frac{\Q(q,P) - q}{\Delta T}.
\]
Evaluating at
\[
(q,P) = I_1 x + I_2 \Phi^{\Delta T}(x), \quad x = (q,p),
\]
we obtain
\[
\nabla_{(q,P)} S^{\Delta T}\left(I_1 x + I_2 \Phi^{\Delta T}(x)\right)
=
\begin{pmatrix}
-(P-p)/\Delta T\\[2pt]
(Q-q)/\Delta T
\end{pmatrix}
= J_{2n}^\top \frac{\Phi^{\Delta T}(x) - x}{\Delta T},
\]
which is the desired identity.
\end{proof}

\end{theorem}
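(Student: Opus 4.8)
The plan is to exploit the symplecticity of $\Phi^{\Delta T}$ to show that, after passing to the mixed variables $(q,P)=(q,\P(q,p))$, the flow increment is a conservative vector field on $W$, and then to read off $S^{\Delta T}$ from a primitive. First I would invoke hypothesis~(ii): since the Jacobian of $\boldsymbol\Psi$ is block lower triangular with diagonal blocks $I_n$ and $\D$, it is invertible on $U$ by~(i), so $\boldsymbol\Psi$ is a local diffeomorphism, and~(ii) upgrades this to a global diffeomorphism $U\to W$. Inverting it produces a $C^1$ map $\p=\p(q,P)$ on $W$ with $\P(q,\p(q,P))=P$, and I set $\Q(q,P):=\Q(q,\p(q,P))$; these will be the two candidate partial derivatives of the sought scalar potential.

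Next I would extract the algebraic content of symplecticity. Expanding $(D\Phi^{\Delta T})^\top J_{2n}D\Phi^{\Delta T}=J_{2n}$ blockwise yields $\A^\top\C=\C^\top\A$, $\A^\top\D-\C^\top\B=I_n$, and $\B^\top\D=\D^\top\B$; using invertibility of $\D$ these rearrange to: $\B\D^{-1}$ symmetric, $\D^{-1}\C$ symmetric, and $\A-\B\D^{-1}\C=\D^{-\top}$. Differentiating the defining identities $\P(q,\p(q,P))=P$ and $\Q(q,\p(q,P))=\Q(q,P)$ in $q$ and in $P$ then gives the Jacobian of $(\p,\Q)$ with blocks $-\D^{-1}\C$, $\D^{-1}$, $\A-\B\D^{-1}\C$, $\B\D^{-1}$; the three symplectic identities make the two diagonal blocks symmetric and the off-diagonal blocks mutual transposes, i.e.\ $\partial_q\p=(\partial_q\p)^\top$, $\partial_P\Q=(\partial_P\Q)^\top$, and $\partial_P\p=(\partial_q\Q)^\top$. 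These are exactly the closedness (Poincaré) conditions for $(\p,\Q)$ to be a gradient field on $W$.

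Because $W$ is simply connected, the closed $1$-form $\p^\top\,dq+\Q^\top\,dP$ is exact, and I would exhibit a primitive $S$ explicitly by integrating first in $q$ at fixed $P=P_0$ and then in $P$, namely
\[
S(q,P):=\int_0^1 \p\bigl(q_0+\tau(q-q_0),P_0\bigr)^\top(q-q_0)\,d\tau
+\int_0^1 \Q\bigl(q,P_0+\tau(P-P_0)\bigr)^\top(P-P_0)\,d\tau.
\]
Differentiating under the integral sign and using the cross-symmetry $(\partial_q\Q)^\top=\partial_P\p$ to handle the mixed term, one checks $\partial_q S=\p$ and $\partial_P S=\Q$, with uniqueness up to an additive constant automatic on the connected set $W$. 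Finally, setting $S^{\Delta T}(q,P):=\bigl(S(q,P)-q^\top P\bigr)/\Delta T$ gives $\nabla_q S^{\Delta T}=(\p(q,P)-P)/\Delta T$ and $\nabla_P S^{\Delta T}=(\Q(q,P)-q)/\Delta T$; evaluating at $(q,P)=I_1x+I_2\Phi^{\Delta T}(x)$ with $x=(q,p)$, so that $P=\P(q,p)$ forces $\p(q,P)=p$ and $\Q(q,P)=Q$, this collapses to $\nabla S^{\Delta T}=\bigl(-(P-p)/\Delta T,\ (Q-q)/\Delta T\bigr)^\top=J_{2n}^\top(\Phi^{\Delta T}(x)-x)/\Delta T$, which is the asserted identity.

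I expect the main obstacle to be the local-to-global bookkeeping: the implicit function theorem only delivers $\p$ locally, so hypothesis~(ii) must be invoked precisely to glue these local inverses into a single $C^1$ map on $W$, and simple connectedness of $W$ is exactly what converts the pointwise symmetry relations into a genuinely global potential. One must also ensure the straight segments used in the explicit construction of $S$ stay inside $W$ — cleanest when $W$ is star-shaped with respect to the base point $(q_0,P_0)$; in general one falls back on the abstract Poincaré lemma on the simply connected $W$, obtaining the primitive by patching local pieces and invoking uniqueness up to constants. The regularity claim $\p,\Q\in C^1$ is a routine byproduct of applying the implicit function theorem to the $C^1$ map $\boldsymbol\Psi$ (using $\Ham\in C^2$).
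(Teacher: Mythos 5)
Your proposal follows essentially the same route as the paper: invert $\boldsymbol\Psi$ to define $\mathbf p$ and $\mathbf Q$ on $W$, extract the three symplecticity identities for the blocks $\A,\B,\C,\D$, differentiate the defining relations to compute the Jacobian of $(\mathbf p,\mathbf Q)$ in $(q,P)$, observe that it satisfies the Poincaré closedness conditions, and then build the potential $S$ by the same two-segment line integral before passing to $S^{\Delta T}$. One small point you handle more carefully than the paper: you correctly flag that the explicit line-integral formula tacitly requires the axis-parallel segments from $(q_0,P_0)$ to stay inside $W$ (i.e.\ something like star-shapedness), and that on a merely simply connected $W$ one should instead invoke the abstract Poincaré lemma (or integrate along paths within $W$), with uniqueness up to an additive constant following from connectedness.
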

\noindent Building on the preceding result, we show that the time--$\Delta T$ flow map of a Hamiltonian system admits a generating function on any forward-invariant set, provided that $\Delta T$ satisfies an explicit step-size restriction. In particular, for $C^2$ Hamiltonians, choosing $\Delta T$ sufficiently small guarantees the existence of a generating function on the considered domain.

\begin{theorem}[Uniform invertibility on compact forward-invariant sets]\label{thm:forward-compact}
Let $\Ham \in C^2(\R^{2n})$ and consider the Hamiltonian system
\[
\dot x = J_{2n}\nabla \Ham(x),
\]
with flow $\Phi^t$. Let $K \subset \R^{2n}$ be nonempty, compact, and forward invariant
for times in an interval $[0,T]$ (with $T \in (0,\infty]$), i.e.\ $\Phi^t(K) \subset K$ for all
$t \in [0,T]$. Define
\[
L_K := \sup_{y \in K}\big\| \nabla^2 \Ham(y) \big\|_2 < \infty.
\]
Then, for every $x \in K$ and every fixed $\Delta T$ satisfying \footnote{
interpret $\tfrac{\log 2}{L_K} = +\infty \text{ if } L_K = 0$}
\[
0 \le \Delta T < \Delta T_K^\star := \min \left\{ T,\ \frac{\log 2}{L_K}\right\},
\]
on any $(q,P)$-chart $(U,\psi)$ with $U\cap K\neq\emptyset$ such that the coordinate image $\psi(U)\subset\mathbb{R}^{2n}$ is simply connected,
the assumptions of Theorem~\ref{thm:genInt} are satisfied, and there exists a generating
function $S(q,P)$ such that, viewing $p$ and $Q$ as functions of $(q,P)$,
\[
\mathbf p(q,P) = \partial_q S(q,P),\qquad
\mathbf Q(q,P) = \partial_P S(q,P),
\]
and the generating-function identity from Theorem~\ref{thm:genInt} holds uniformly for all
$x \in K$ and all $0 \le \Delta T < \Delta T_K^\star$.
\end{theorem}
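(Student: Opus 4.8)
The plan is to show that the step-size restriction forces $D\Phi^{\Delta T}$ to be a small perturbation of the identity on $K$, from which invertibility of the block $\D$ and the diffeomorphism property of $\boldsymbol\Psi$ follow, so that Theorem~\ref{thm:genInt} can be applied on a suitable $(q,P)$-chart.

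First I would use the variational equation: for $x\in K$ the matrix $Y(t):=D\Phi^t(x)$ solves $\dot Y(t)=J_{2n}\,\nabla^2\Ham(\Phi^t(x))\,Y(t)$ with $Y(0)=I_{2n}$, which is legitimate since $\Ham\in C^2(\R^{2n})$ and, by forward invariance, $\Phi^t(x)\in K$ for all $t\in[0,T]\supseteq[0,\Delta T]$. In integral form $Y(t)=I_{2n}+\int_0^t J_{2n}\nabla^2\Ham(\Phi^s(x))\,Y(s)\,ds$, and with $\|J_{2n}\|_2=1$ and $\|\nabla^2\Ham(\Phi^s(x))\|_2\le L_K$, Grönwall's inequality gives $\|Y(t)\|_2\le e^{L_K t}$, hence $\|Y(t)-I_{2n}\|_2\le\int_0^t L_K e^{L_K s}\,ds=e^{L_K t}-1$. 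Evaluating at $t=\Delta T$ and using $\Delta T<\log 2/L_K$ (with the footnote convention when $L_K=0$) yields
\[
\|D\Phi^{\Delta T}(x)-I_{2n}\|_2\ \le\ e^{L_K\Delta T}-1\ =:\ \rho\ <\ 1
\]
uniformly in $x\in K$, and, since $e^{L_K\Delta T}-1$ is increasing in $\Delta T$, uniformly over $0\le\Delta T<\Delta T_K^\star$.

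Next I would localise and extract the relevant block. Since the operator norm of any diagonal $n\times n$ block of a $2n\times2n$ matrix is dominated by the norm of the full matrix (test on block vectors of the form $(0,v)^\top$), the lower-right block satisfies $\|\D(q,p)-I_n\|_2\le\rho<1$ for $(q,p)\in K$, so $\D(q,p)$ is invertible with $\D(q,p)^{-1}=\sum_{k\ge0}(I_n-\D(q,p))^k$; this is hypothesis~(i) of Theorem~\ref{thm:genInt}. Because $x\mapsto\|D\Phi^{\Delta T}(x)-I_{2n}\|_2$ is continuous and $K$ compact, the sublevel set $\mathcal N:=\{x:\|D\Phi^{\Delta T}(x)-I_{2n}\|_2<1\}$ is open and contains $K$, so any admissible $(q,P)$-chart $(U,\psi)$ may be taken with $U\subseteq\mathcal N$ and (i) then holds on all of $U$. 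Consequently $D\boldsymbol\Psi$ is block lower-triangular with diagonal blocks $I_n$ and $\D$, hence invertible on $U$, so $\boldsymbol\Psi$ is a local diffeomorphism; global injectivity on $U$ follows from the estimate $\|\P(q,p_2)-\P(q,p_1)\|_2\ge(1-\rho)\|p_2-p_1\|_2$ along any line segment lying in $U$ (taking $U$ convex, or passing to a convex subchart). Thus $\boldsymbol\Psi:U\to W:=\boldsymbol\Psi(U)$ is a diffeomorphism, and identifying $W$ with the simply connected coordinate image $\psi(U)$ gives hypothesis~(ii).

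With (i) and (ii) verified I would invoke Theorem~\ref{thm:genInt} directly: it produces the scalar potential $S$ with $\p(q,P)=\partial_q S(q,P)$ and $\Q(q,P)=\partial_P S(q,P)$, together with the type-II identity $J_{2n}^\top(\Phi^{\Delta T}(x)-x)/\Delta T=\nabla_{(q,P)}S^{\Delta T}(I_1x+I_2\Phi^{\Delta T}(x))$ for all $x\in U$; since the controlling quantities $\rho$ and $\Delta T_K^\star$ depend only on $T$ and $L_K$, this holds uniformly for $x\in K$ and $0\le\Delta T<\Delta T_K^\star$. The Grönwall and Neumann-series estimates are routine; the one place that needs care is the domain bookkeeping — the perturbation bound is guaranteed only on $K$ via forward invariance, so one must first pass to the open neighbourhood $\mathcal N$ (and, for injectivity of $\boldsymbol\Psi$, to a convex subchart) before the hypotheses of Theorem~\ref{thm:genInt}, which are stated for an open set $U$, can legitimately be applied.
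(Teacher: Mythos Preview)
Your proposal is correct and follows essentially the same route as the paper: variational equation $\dot Y=J_{2n}\nabla^2\Ham(\Phi^t(x))Y$, Gr\"onwall to obtain $\|D\Phi^{\Delta T}(x)-I_{2n}\|_2\le e^{L_K\Delta T}-1<1$, extraction of the lower-right block $\D$ via submatrix norm domination, Neumann-series invertibility of $\D$, local diffeomorphism of $\boldsymbol\Psi$, and invocation of Theorem~\ref{thm:genInt}. Your additional care with the open neighbourhood $\mathcal N\supset K$ and the explicit injectivity estimate $\|\P(q,p_2)-\P(q,p_1)\|_2\ge(1-\rho)\|p_2-p_1\|_2$ on a convex subchart actually goes slightly beyond what the paper writes out (it passes directly from ``local diffeomorphism'' to ``assumptions of Theorem~\ref{thm:genInt} are satisfied''), so if anything you have filled in a step the paper leaves implicit.
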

\begin{proof}
Fix $x \in K$ and $\Delta T \in [0,T)$. By forward invariance, the trajectory segment
$\{\Phi^t(x) : t \in [0,\Delta T]\}$ lies in $K$. Let
\[
\mathbf Y(t;x) := D\Phi^t(x).
\]
Then $\mathbf Y$ solves the differential equation
\[
\dot{\mathbf Y}(t;x) = D F(\Phi^t(x)) \mathbf Y(t;x),\qquad \mathbf Y(0;x) = I_{2n},
\]
with $F(z) = J_{2n} \nabla \Ham(z)$ and $D F(z) = J_{2n} \nabla^2 \Ham(z)$.
Since $\nabla^2 \Ham$ is continuous and the trajectory remains in $K$, we have
\[
\|D F(\Phi^t(x))\|_2
= \left\|J_{2n}\nabla^2 \Ham(\Phi^t(x))\right\|_2
\le \|\nabla^2 \Ham(\Phi^t(x))\|_2
\le L_K
\]
for all $t \in [0,\Delta T]$ (using $\|J_{2n}\|_2 = 1$). Grönwall’s inequality then yields
\[
\|\mathbf Y(\Delta T;x) - I_{2n}\|_2 \le e^{L_K \Delta T} - 1.
\]
If $\Delta T < \log(2)/L_K$ (or for all $\Delta T$ if $L_K = 0$), then $e^{L_K\Delta T} - 1 < 1$.

\noindent Write
\[
\mathbf Y(\Delta T;x)
=
\begin{pmatrix}
\mathbf A & \mathbf B\\
\mathbf C & \mathbf D
\end{pmatrix},
\]
and define $E := \begin{pmatrix}0\\ I_n\end{pmatrix} \in \R^{2n \times n}$. Then
\[
\mathbf D - I_n
= E^\top\left(\mathbf Y(\Delta T;x) - I_{2n}\right)E,
\]
so
\[
\|\mathbf D - I_n\|_2
= \left\|E^\top\left(\mathbf Y(\Delta T;x) - I_{2n}\right)E\right\|_2
\le \|E^\top\|_2 \|\mathbf Y(\Delta T;x) - I_{2n}\|_2 \|E\|_2
\le \|\mathbf Y(\Delta T;x) - I_{2n}\|_2 < 1,
\]
since $\|E\|_2 = \|E^\top\|_2 = 1$.
Hence $\mathbf D = I_n + (\mathbf D - I_n)$ is invertible by the Neumann series.

\noindent Invertibility of the lower-right block $\mathbf D(\Delta T;x)$ implies that, near $x$, the map
\[
(q,p) \mapsto \left(q,\mathbf P(q,p;\Delta T)\right)
\]
is a local diffeomorphism (its Jacobian is block upper-triangular with diagonal blocks $I_n$ and $\mathbf D$).
Thus, on any simply connected $(q,P)$-chart intersecting $K$, the assumptions of
Theorem~\ref{thm:genInt} are satisfied. Consequently, there exists a generating function
$S(q,P)$ such that, viewing $p$ and $Q$ as functions of $(q,P)$,
\[
\mathbf p(q,P) = \partial_q S(q,P),\qquad
\mathbf Q(q,P) = \partial_P S(q,P),
\]
and the generating-function identity from Theorem~\ref{thm:genInt} holds for all $x \in K$ and
all $0 \le \Delta T < \Delta T_K^\star$. The bounds are uniform in $x \in K$ because $L_K$
was defined as a supremum over the compact set $K$.
\end{proof}

\begin{remark}\label{rem:fwd-invariance}
For a Hamiltonian system $\dot x = J_{2n}\nabla \Ham(x)$ the Hamiltonian is conserved, i.e.,
$\Ham(\Phi^t(x_0)) = \Ham(x_0)$ for all $t$ in the interval of existence. Thus any sublevel set
\[
K_E := \{ x \in \R^{2n} : \Ham(x) \le E \}
\]
is forward (and backward) invariant: if $x_0 \in K_E$, then
$\Ham(\Phi^t(x_0)) = \Ham(x_0) \le E$ for all $t$, hence $\Phi^t(x_0) \in K_E$.
Theorem~\ref{thm:forward-compact} is applicable when $K_E$ is also compact,
which for example holds under the following common conditions:
\begin{enumerate}
\item[(a)] \textbf{Radially unbounded Hamiltonian.}
If $\Ham(x) \to \infty$ as $\|x\|_2 \to \infty$, then every sublevel set $K_E$ is compact.
Hence $K_E$ is a compact invariant set and the theorem applies.
This situation is typical for mechanical systems of the form
\[
\Ham(q,p) = \tfrac12  p^\top M(q)^{-1} p + V(q),
\]
where $M(q)$ is uniformly positive definite and bounded, i.e., there exist
$0 < m_\ast \le m^\ast < \infty$ such that
$M(q) -
m_\ast I_n$ and $m^\ast I_n - M(q) $ are positive definite for all $q$,
and $V(q) \to \infty$ as $\|q\|_2 \to \infty$.

\item[(b)] \textbf{Bounded configuration space.}
If the system can only move inside a bounded region $\mathcal Q \subset \R^n$
(e.g., due to constraints, periodicity, or walls) and $M(q)$ is uniformly positive
definite and bounded on $\mathcal Q$, then for any energy level $E$ the set
\[
K_E \cap (\mathcal Q \times \R^n)
\]
is compact and invariant under the flow.
\end{enumerate}
\end{remark}

\noindent
Lastly, we demonstrate that, in the quadratic case, a generating function exists for almost every $\Delta T > 0$, thereby removing the step-size restriction.

\begin{theorem}[Existence of a global generating function for quadratic $\Ham$]
\label{thm:quadratic}
Let $\Ham(x) = \tfrac12 x^\top H x$ with a symmetric matrix $H = H^\top \in \R^{2n\times 2n}$ and
\[
\Phi^{\Delta T}(x) = M(\Delta T) x,
\qquad
M(\Delta T) := e^{\Delta T J_{2n} H}
= \begin{pmatrix} A(\Delta T) & B(\Delta T)\\ C(\Delta T) & D(\Delta T) \end{pmatrix}.
\]
Define the resonance set
\[
\mathcal R := \left\{ \Delta T \in \R : \det D(\Delta T) = 0 \right\}.
\]
Then $\mathcal R$ is discrete (it has no finite accumulation point). For any
$\Delta T \notin \mathcal R$, the hypotheses of Theorem~\ref{thm:genInt} hold globally with
$U = \Omega = \R^{2n}$ and $W = \R^{2n}$, and there exists a generating function
$S \in C^1(\R^{2n})$ such that, viewing $p$ and $Q$ as functions of $(q,P)$,
\[
\mathbf p(q,P) = \partial_q S(q,P),\qquad
\mathbf Q(q,P) = \partial_P S(q,P),
\]
and the generating-function identity from Theorem~\ref{thm:genInt} holds on all of $\R^{2n}$.
\end{theorem}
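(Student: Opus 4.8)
The plan is to exploit that, for a quadratic Hamiltonian, the flow is \emph{linear}: since $\nabla\Ham(x)=Hx$, the system $\dot x=J_{2n}Hx$ has flow $\Phi^{\Delta T}(x)=e^{\Delta T J_{2n}H}x=M(\Delta T)x$, so its Jacobian is the \emph{constant} matrix $D\Phi^{\Delta T}(q,p)\equiv M(\Delta T)$ with blocks $A(\Delta T),B(\Delta T),C(\Delta T),D(\Delta T)$. Consequently the two geometric hypotheses of Theorem~\ref{thm:genInt} collapse to a single algebraic condition — invertibility of the block $D(\Delta T)$ — and once that is in place the conclusion follows by quoting Theorem~\ref{thm:genInt} with $U=\Omega=W=\R^{2n}$. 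So the proof has two parts: a soft analyticity argument for discreteness of $\mathcal R$, and a bookkeeping check that the hypotheses of Theorem~\ref{thm:genInt} indeed hold globally off $\mathcal R$.

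First I would establish discreteness of $\mathcal R$. The map $\Delta T\mapsto M(\Delta T)=e^{\Delta T J_{2n}H}$ is the restriction to $\R$ of an entire matrix-valued function, so each of its blocks, and hence the scalar function $\Delta T\mapsto\det D(\Delta T)$ (a polynomial in the entries of $D(\Delta T)$), is entire. Since $M(0)=I_{2n}$ we have $D(0)=I_n$ and $\det D(0)=1\neq0$, so $\det D$ is not identically zero; by the identity theorem a nonzero entire function has isolated zeros, hence $\mathcal R=\{\Delta T\in\R:\det D(\Delta T)=0\}$ has no finite accumulation point.

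Next, for a fixed $\Delta T\notin\mathcal R$ I would verify the hypotheses of Theorem~\ref{thm:genInt} on $U=\R^{2n}$. Hypothesis~(i) is immediate, since $\D(q,p)\equiv D(\Delta T)$ is invertible everywhere. For hypothesis~(ii), $\P(q,p)=C(\Delta T)q+D(\Delta T)p$, so
\[
\boldsymbol\Psi(q,p)=\bigl(q,\ C(\Delta T)q+D(\Delta T)p\bigr)
\]
is the linear map with matrix $\begin{pmatrix}I_n&0\\ C(\Delta T)&D(\Delta T)\end{pmatrix}$, whose determinant equals $\det D(\Delta T)\neq0$; hence $\boldsymbol\Psi$ is a linear isomorphism of $\R^{2n}$ onto $W=\R^{2n}$, which is open and simply connected. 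Theorem~\ref{thm:genInt} then yields $\mathbf p,\mathbf Q\in C^1(\R^{2n},\R^n)$ and a potential $S\in C^1(\R^{2n})$, unique up to an additive constant, with $\mathbf p=\partial_q S$, $\mathbf Q=\partial_P S$, and the type-II generating function $S^{\Delta T}(q,P)=\tfrac1{\Delta T}\bigl(S(q,P)-q^\top P\bigr)$ satisfying the stated identity on all of $\R^{2n}$. I would additionally record that, since $\boldsymbol\Psi^{-1}$ is linear and $M(\Delta T)$ is symplectic, $\mathbf p$ and $\mathbf Q$ are linear in $(q,P)$ with symmetric block Jacobian $\begin{pmatrix}-D^{-1}C&D^{-1}\\ D^{-\top}&BD^{-1}\end{pmatrix}$ (using the block identities derived in the proof of Theorem~\ref{thm:genInt}), so $S$ may be taken to be the corresponding explicit quadratic form, whence $S,S^{\Delta T}\in C^\infty(\R^{2n})$.

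The only step with genuine content is the discreteness of $\mathcal R$, and it reduces to the identity theorem together with the normalization $\det D(0)=1$; everything else is constant-coefficient linear algebra plus a citation of Theorem~\ref{thm:genInt}. In particular no step-size restriction enters, which is exactly the improvement over Theorem~\ref{thm:forward-compact} that the statement advertises.
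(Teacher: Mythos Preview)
Your proof is correct and follows essentially the same route as the paper: analyticity of $\Delta T\mapsto\det D(\Delta T)$ together with $\det D(0)=1$ gives discreteness of $\mathcal R$, and for $\Delta T\notin\mathcal R$ the linear map $\boldsymbol\Psi$ is a global diffeomorphism onto the simply connected $\R^{2n}$, so Theorem~\ref{thm:genInt} applies with $U=\Omega=W=\R^{2n}$. Your closing remark that $S$ can be taken as an explicit quadratic form (hence $C^\infty$) is a nice bonus that the paper does not spell out.
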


\begin{proof}
The map $\Delta T \mapsto M(\Delta T)$ is real-analytic, being a matrix exponential of
$\Delta T J_{2n}H$. Hence each block $A(\Delta T),B(\Delta T),C(\Delta T),D(\Delta T)$ is
real-analytic in $\Delta T$, and so is
\[
f(\Delta T) := \det D(\Delta T).
\]
We have $M(0) = I_{2n}$, so $D(0) = I_n$ and therefore $f(0) = 1 \neq 0$. Thus $f$ is not
identically zero, and by the identity theorem for real-analytic functions its zero set is
discrete in $\R$. This proves that $\mathcal R = \{ \Delta T : f(\Delta T) = 0 \}$ is discrete.

\noindent Now fix $\Delta T \notin \mathcal R$. Then $D(\Delta T)$ is invertible. Since
\[
\Phi^{\Delta T}(q,p)
= \left(Q,P\right)
= \left(A(\Delta T) q + B(\Delta T) p,\ C(\Delta T) q + D(\Delta T) p\right),
\]
we have
\[
\mathbf P(q,p;\Delta T) = C(\Delta T) q + D(\Delta T) p.
\]
The map
\[
\boldsymbol\Psi : \R^{2n} \to \R^{2n},\qquad
\boldsymbol\Psi(q,p) := \left(q,\mathbf P(q,p;\Delta T)\right)
= \left(q, C(\Delta T) q + D(\Delta T) p\right),
\]
is linear with Jacobian
\[
D\boldsymbol\Psi(q,p)
=
\begin{pmatrix}
I_n & 0\\
C(\Delta T) & D(\Delta T)
\end{pmatrix},
\]
which is invertible because $D(\Delta T)$ is invertible. Consequently, $\boldsymbol\Psi$ is a
global diffeomorphism $\R^{2n} \to \R^{2n}$, and $W := \boldsymbol\Psi(\R^{2n}) = \R^{2n}$ is
simply connected. Thus both conditions (i) and (ii) of Theorem~\ref{thm:genInt} are satisfied
with $U = \Omega = \R^{2n}$ and $W = \R^{2n}$.

\noindent By Theorem~\ref{thm:genInt} there exists $S \in C^1(\R^{2n})$, unique up to an additive constant,
such that, when we express $p$ and $Q$ as functions of $(q,P)$ via
\[
(q,P) = \boldsymbol\Psi(q,p),\qquad (Q,P) = \Phi^{\Delta T}(q,p),
\]
we have
\[
\mathbf p(q,P) = \partial_q S(q,P),\qquad
\mathbf Q(q,P) = \partial_P S(q,P),
\]
and the associated generating function $S^{\Delta T}$ satisfies the generating-function identity
from Theorem~\ref{thm:genInt} on all of $\R^{2n}$.
\end{proof}
\noindent In summary, this section derives verifiable conditions ensuring that the mixed flow increment
\eqref{eqn:mixed-training} admits a type--II generating function $S^{\Delta T}$ on the mixed-data domain
$\Omega$, thereby justifying the RKHS target assumption and enabling the convergence analysis from the
previous section to apply.

\section{Numerical Experiments}\label{Sec:Numerics}

We evaluate the proposed symplectic kernel predictor on benchmark Hamiltonian systems and compare it against a structure-preserving baseline. As a high-fidelity reference we use the implicit midpoint rule with a sufficiently small micro time step $\Delta t = 10^{-3}$. We report two error measures:

\paragraph{(i) maximum residual error versus number of centers.}
Let $s_m$ denote the surrogate after $m$ steps of the greedy procedure, and let
$e_m := u - s_m$ be the corresponding interpolation error of the target potential $u$.
For the training and validation input sets $X_{\mathrm{train}}$ and $X_{\mathrm{val}}$, we report, for
$X \in \{X_{\mathrm{train}},X_{\mathrm{val}}\}$, the discrete uniform norm of the gradient residual
\[
E_X(m)
:=
\max_{\ell\in\mathcal J}\ \max_{x\in X}
\left|\partial_\ell e_m(x)\right|,
\]
i.e., the worst-case gradient mismatch over $X$ as a function of the number of selected centers $m$.

\paragraph{(ii) Relative error over time.}
At multiples of the macro time step $t_k = k\Delta T \in \mathbb{T}_{\Delta T}$ we quantify the deviation from the reference trajectory by
\[
e_{\mathrm{rel}}(t_k)
:=
\frac{\left\|x_{\mathrm{pred}}(t_k) - x_{\mathrm{ref}}(t_k)\right\|_2}
     {\left\|x_{\mathrm{ref}}(t_k)\right\|_2} ,
\]
where $x_{\mathrm{ref}}(t_k)$ denotes the reference solution obtained with the micro-step
implicit midpoint scheme, and $x_{\mathrm{pred}}(t_k)$ is the state obtained from the
symplectic kernel predictor at time $t_k$.

\noindent
Both the kernel and its shape parameter are selected by minimizing the error on the validation set: 
among a grid of shape parameters $\varepsilon > 0$ and the kernels listed below, we choose the pair 
(kernel, $\varepsilon$) that minimizes the validation objective (here, $E_{X_{\mathrm{val}}}(m^\star)$ 
for a fixed $m^\star$). We consider radial kernels of the form $k(x,x';\varepsilon) = \kappa(\varepsilon r)$ 
with $r = \|x - x'\|_2$, including the inverse multiquadric (IMQ) kernel, Gaussian, linear Matérn, 
and quadratic Matérn kernels:
\begin{align*}
\text{IMQ:}\quad
&\kappa_{\mathrm{IMQ}}(r;\varepsilon) = \frac{1}{\sqrt{1 + (\varepsilon r)^2}},\\[0.5ex]
\text{Gaussian:}\quad
&\kappa_{\mathrm{G}}(r;\varepsilon) = \exp \left(-(\varepsilon r)^2\right),\\[0.5ex]
\text{Matérn-$\tfrac{3}{2}$ (``linear'' Matérn):}\quad
&\kappa_{\mathrm{M1}}(r;\varepsilon) = \left(1 + \varepsilon r\right)\exp \left(-\varepsilon r\right),\\[0.5ex]
\text{Matérn-$\tfrac{5}{2}$ (``quadratic'' Matérn):}\quad
&\kappa_{\mathrm{M2}}(r;\varepsilon) = \left(1 + \varepsilon r + \tfrac{1}{3}(\varepsilon r)^2\right)\exp \left(-\varepsilon r\right).
\end{align*}

\subsection{Pendulum}\label{sec:pendulum}
We first consider the mathematical pendulum with Hamiltonian
\[
  \mathcal H(q,p)
  = \frac{p^2}{2 m l^{2}} + m g l \left(1-\cos q\right),
  \qquad m = l = 1,\; g = 9.81,
\]
so that $\mathcal H(q,p) = \tfrac12 p^2 + g \left(1-\cos q\right)$ and choose a final time $T = 6.0$. We define the box
\[
\hat \Omega := [-\pi,\pi]\times\left[-2\sqrt{g}, 2\sqrt{g}\right],
\]
and the energy-bounded domain
\[
  \Omega := \left\{(q,p)\in\hat \Omega:\ \mathcal H(q,p) < 2g\right\}.
\]

\noindent
For each initial state $x_0 = (q_0,p_0)$ we compute the reference solution
$x(\cdot;x_0)$ with the implicit midpoint rule using the micro time step $\Delta t = 10^{-3}$.
As a structure-preserving baseline for large steps we apply the implicit midpoint rule directly
with the macro step $\Delta T$.

\noindent We consider two cases that differ in the sampling scenario. In both cases, we first create
a uniform tensor-product grid $X$ on $\hat \Omega$ with $200\times 200$ points and retain only
those inside the energy-bounded set $\Omega$. We then define the training sets
\[
    \mathcal M_{\mathrm{A}}^{\Delta T}
    := \left\{\left(I_1 x_0 + I_2\Phi^{\Delta T}(x_0), J_{2n}^{\top} \frac{\Phi^{\Delta T}(x_0)-x_0}{\Delta T}\right)
     : x_0 \in X \cap \Omega \right\},
\]
and
\[
    \mathcal M_{\mathrm{B}}^{\Delta T}
    := \left\{\left(I_1 x_0 + I_2\Phi^{\Delta T}(x_0), J_{2n}^{\top} \frac{\Phi^{\Delta T}(x_0)-x_0}{\Delta T}\right): x_0 \in X \cap \Omega^- \right\},
\]
with
\[
\Omega^- := \left\{(q,p)\in\Omega : p \le 0\right\}.
\]
In both cases, $x(\Delta T;x_0)$ is approximated using the implicit midpoint rule with the micro
time step $\Delta t$.

\noindent
For testing, we draw $10$ initial conditions with $q_0 \sim \mathcal U([0,\pi])$ and $p_0 = 0$,
and evolve each trajectory up to $T = 6.0$ using
(i) the proposed symplectic predictor,
(ii) the implicit midpoint rule with step $\Delta T$, and
(iii) the reference micro-step solution (time step $\Delta t$). Note that
\[
\nabla^2\Ham(q,p)=\begin{pmatrix}9.81\cos q&0\\[2pt]0&1\end{pmatrix},
\qquad
L_K:=\sup_{(q,p)\in K}\|\nabla^2\Ham(q,p)\|_2\le 9.81,
\]
so that
\[
\Delta T^\star_K=\min\left\{T,\ \tfrac{\log 2}{9.81}\right\}\approx 7.07\times 10^{-2}.
\]
Hence, the hypotheses of \Cref{thm:forward-compact} are guaranteed for $\Delta T<7.07\times 10^{-2}$, implying existence of a generating function; since the underlying Grönwall-type estimate is conservative, we test $\Delta T\in\{0.1, 0.05, 0.025\}$, including a regime with $\Delta T>\Delta T^\star_K$.

\noindent As a first numerical result, we present the convergence of the greedy procedure in
\Cref{fig:fgreedy-convergence}.

% --- Figure: f-greedy convergence ---
\begin{figure}[h]
  \centering
  \begin{subfigure}[t]{0.64\textwidth}
    \centering
    \includegraphics[width=\linewidth]{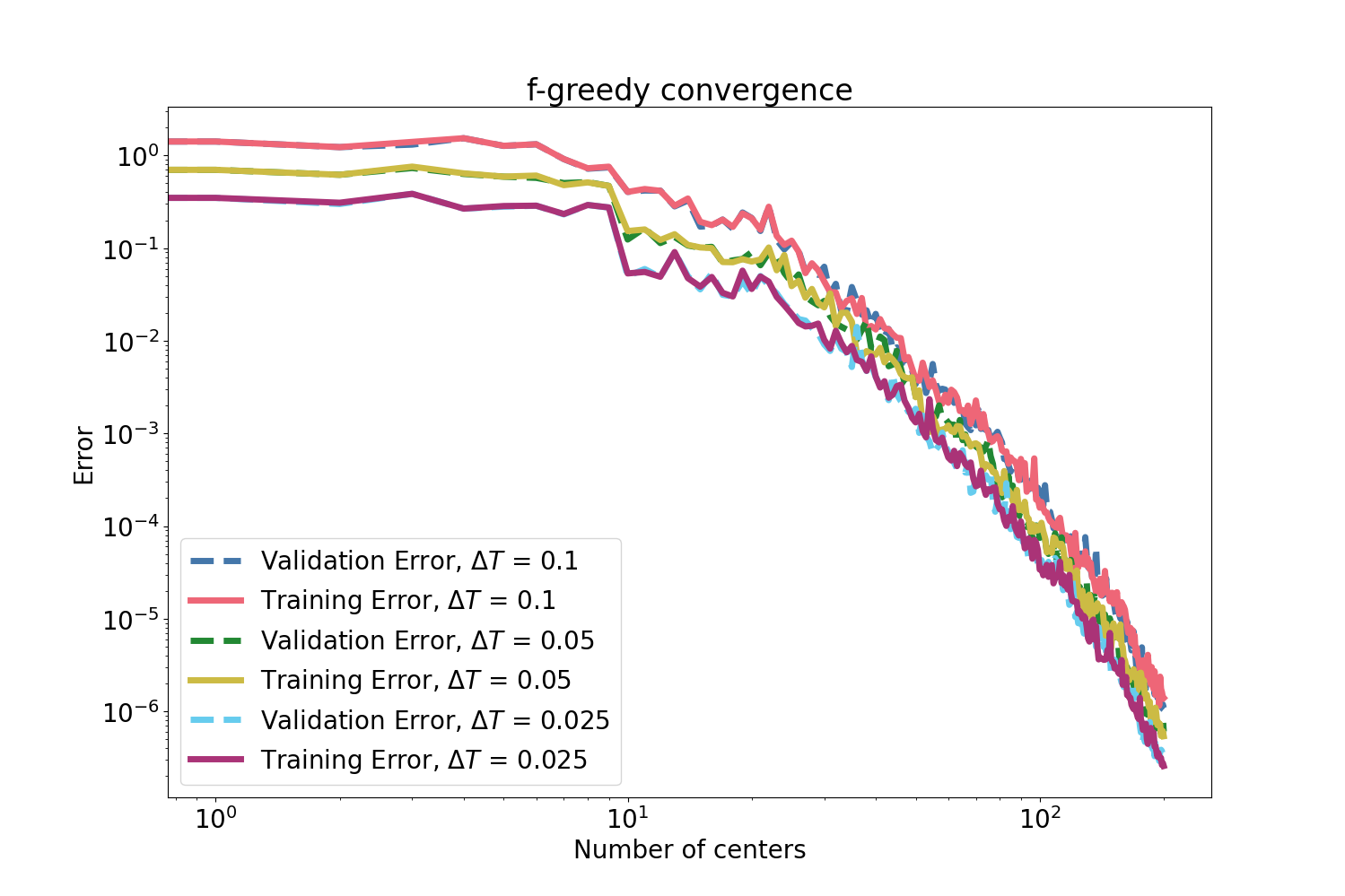}
    \caption{$f$-greedy interpolation error versus the number of selected centers for three macro time step sizes, showing training (solid) and validation (dashed) curves.}
    \label{fig:fgreedy-convergence}
  \end{subfigure}\hfill
  \begin{subfigure}[t]{0.35\textwidth}
    \centering
    \includegraphics[width=\linewidth]{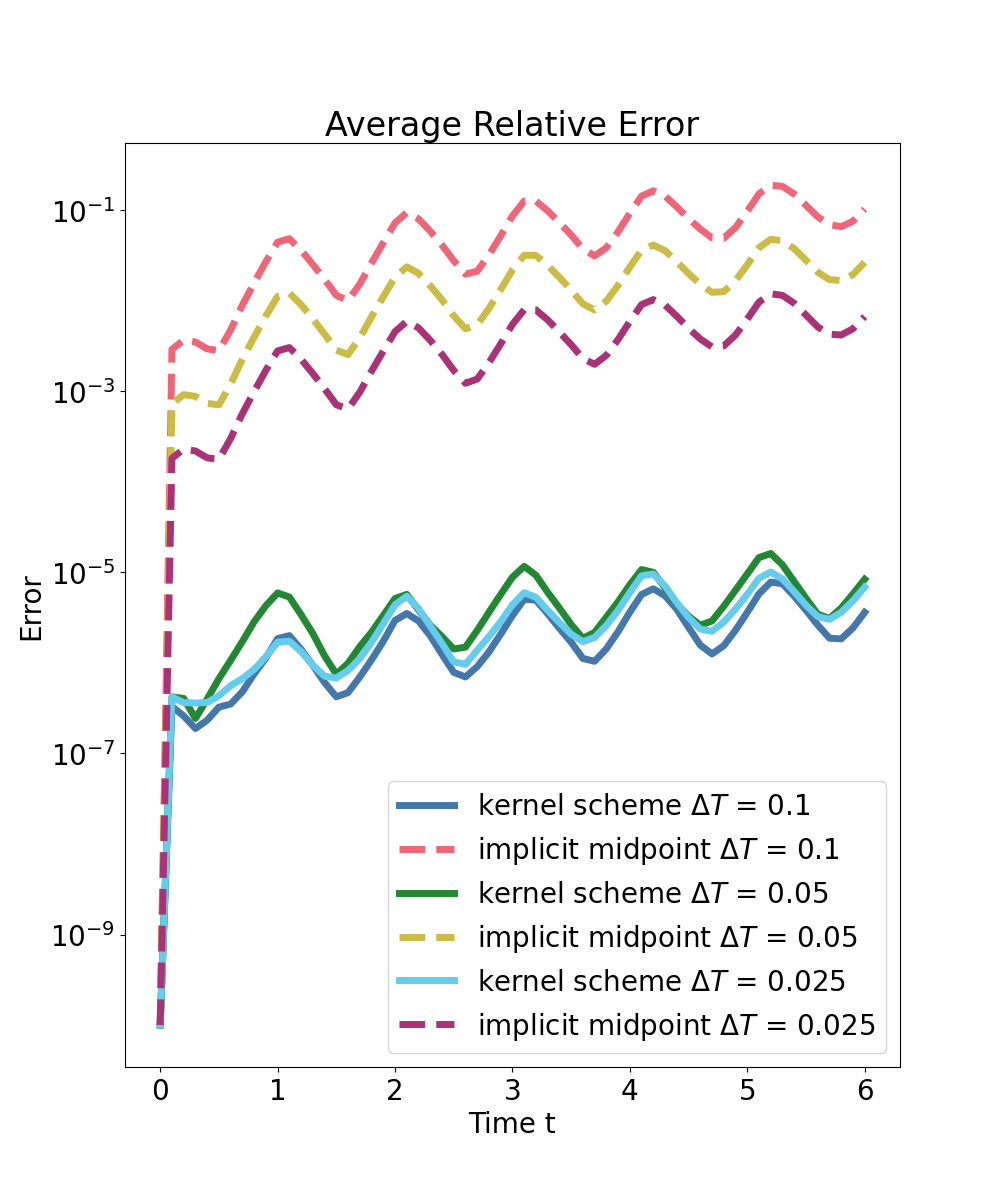}
    \caption{Relative error over time comparing the symplectic kernel scheme (solid) with the implicit-midpoint baseline (dashed) for three macro time step sizes.}
    \label{fig:rel-reduction-error}
  \end{subfigure}
  \caption{Pendulum: (a) $f$-greedy convergence vs.\ centers; (b) relative error over time.}
\end{figure}

\noindent
For
$\Delta T \in \{0.1,0.05,0.025\}$, we report the maximum residual error as a function of the number of centers  both for training and validation data. After a short plateau (for roughly the first $15$ centers),
all curves exhibit an almost straight-line decay on the log--log axes, i.e., an approximately
algebraic convergence, reaching values of order $\sim 10^{-6}$ by about $200$ centers.
Training and validation errors track closely for all $\Delta T$, with the validation error
slightly above the training error, indicating good generalization and no visible overfitting.
Moreover, the error decay for smaller $\Delta T$ starts from a slightly lower level and
attains a slightly lower minimal error.

\noindent
Next, we apply the kernel scheme to predict the $10$ trajectories corresponding to the
$10$ randomly chosen initial conditions described above, and present the average relative
error. The results are shown in \Cref{fig:rel-reduction-error}. Over the entire time
horizon, the kernel scheme achieves errors between $\sim 10^{-7}$ and $\sim 10^{-5}$.
The implicit-midpoint curves are $3$--$4$ orders of magnitude larger at the same $\Delta T$,
rising from $\sim 10^{-3}$ to $\sim 10^{-1}$ while displaying similar oscillations.
Decreasing $\Delta T$ uniformly lowers all implicit-midpoint curves (best for $\Delta T = 0.025$,
then $0.05$, then $0.1$); however, the kernel predictor remains more accurate for every step size.
In contrast and remarkably, even though the kernel scheme exhibits slightly faster convergence for smaller
$\Delta T$, it performs best for the largest $\Delta T$, likely due to the smaller number of
macro steps required to span the trajectory, which reduces error accumulation.

\noindent
Next, we show the same experiment, but with a reduced training set $\mathcal M_{\mathrm{B}}^{\Delta T}$. We reduce the maximum number of centers by a factor of $1/2$, yielding a fill distance comparable to that in the previous experiment with the larger domain.

\begin{figure}[h]
  \centering
  \begin{subfigure}[t]{0.64\textwidth}
     \centering
     \includegraphics[width = \textwidth]{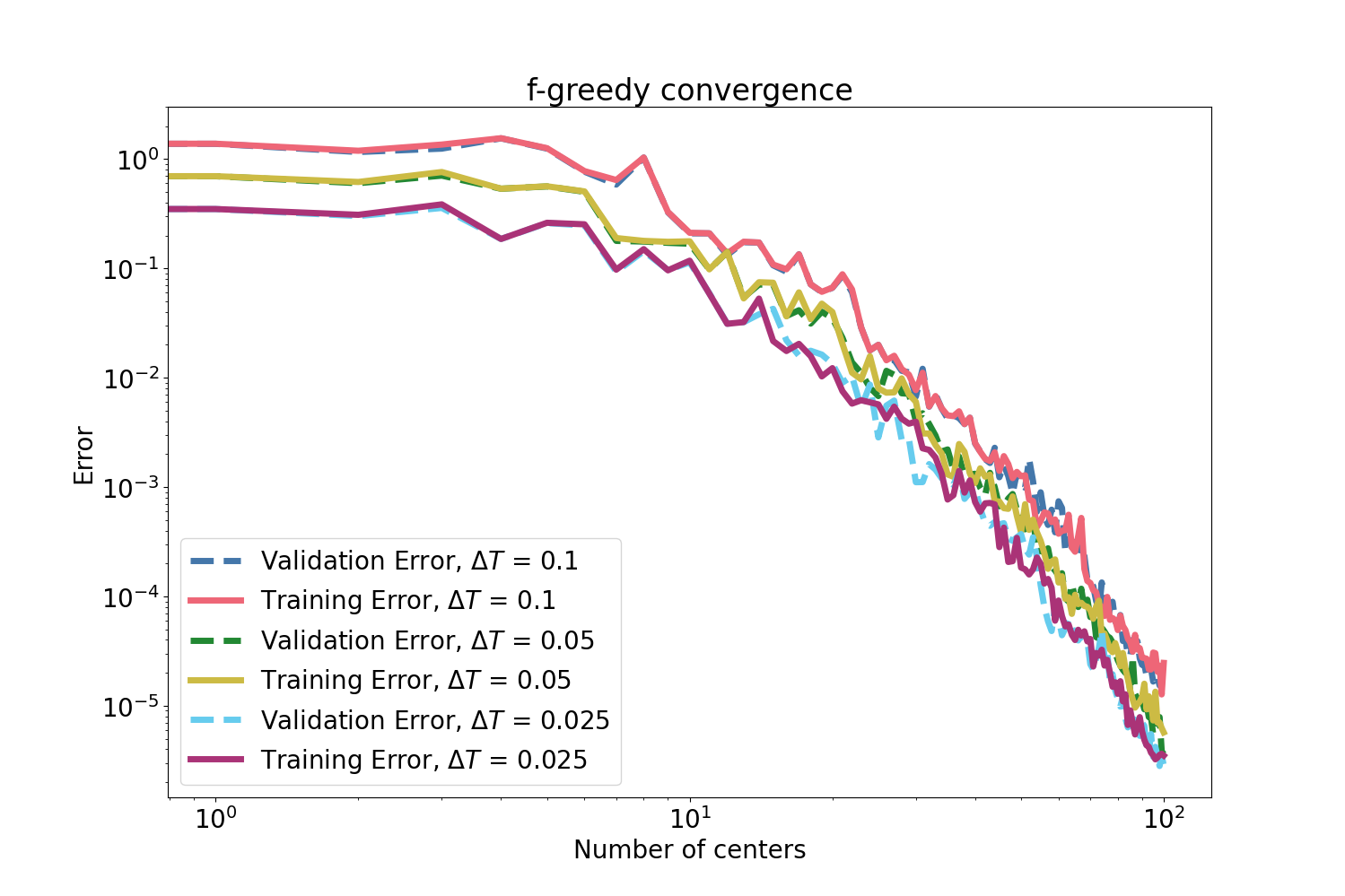}
     \caption{$f$-greedy interpolation error versus the number of selected centers for three macro time step sizes, showing training (solid) and validation (dashed) curves.}
     \label{fig:fgreedy-convergence-gen}
  \end{subfigure}\hfill
  \begin{subfigure}[t]{0.35\textwidth}
    \centering
    \includegraphics[width = \textwidth]{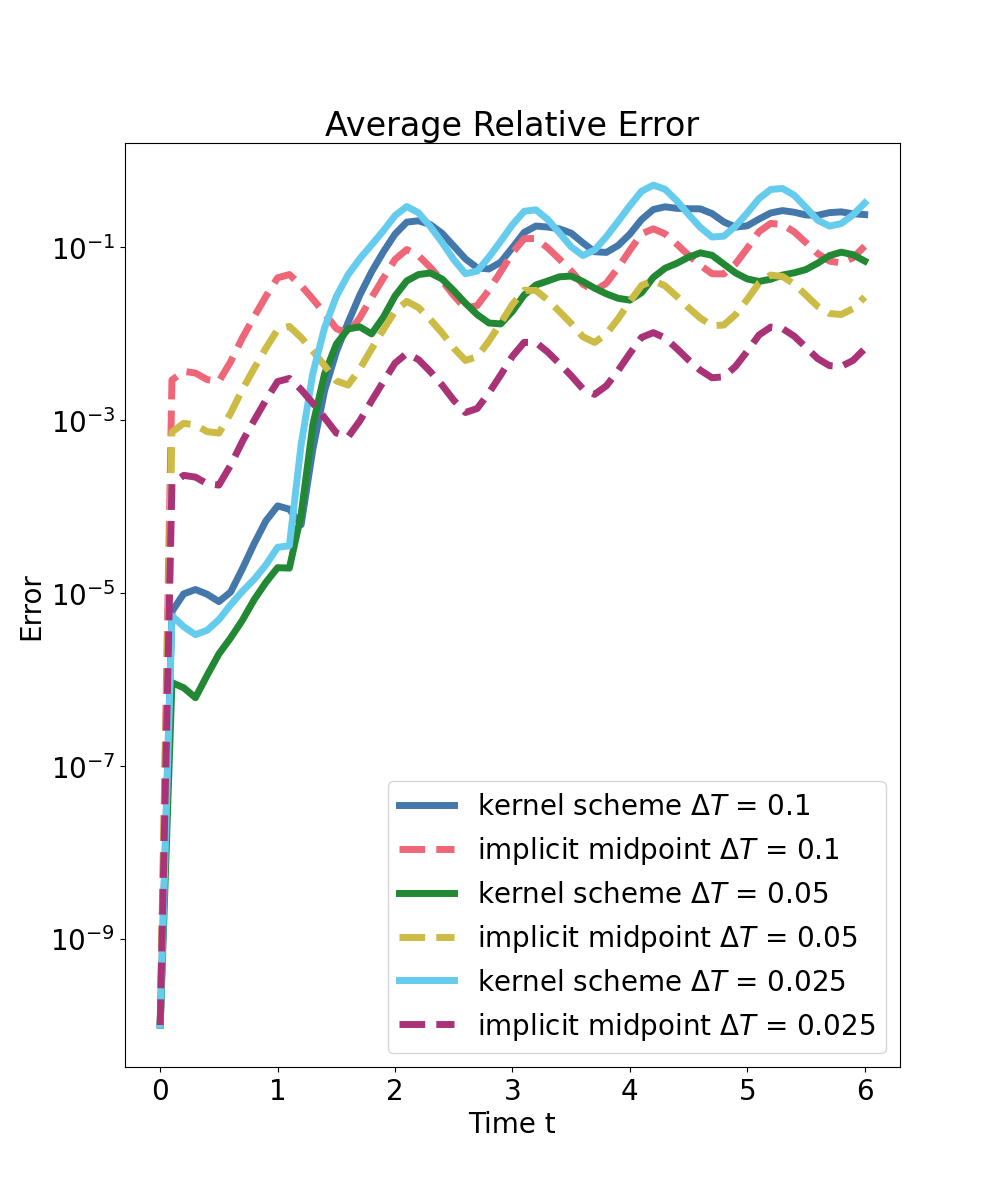}
    \caption{Relative error over time comparing the symplectic kernel scheme (solid) with the implicit-midpoint baseline (dashed) for three macro time step sizes.}
    \label{fig:rel-reduction-error-gen}
  \end{subfigure}
  \caption{Pendulum (reduced training set): (a) $f$-greedy convergence vs.\ centers; (b) relative error over time.}
\end{figure}

\noindent
In \Cref{fig:fgreedy-convergence-gen} we report the maximum residual error as a function of the number
of centers for $\Delta T \in \{0.1,0.05,0.025\}$. The decay is very similar to the original
experiment with the larger training set.

\noindent
In \Cref{fig:rel-reduction-error-gen}, the relative error is shown, averaged over the
same $10$ random initial conditions. Over the horizon $T = 6$, the kernel scheme starts with very
small errors (about $10^{-7}$–$10^{-5}$ near $t \approx 0$) but then these increase as the
trajectory leaves the training domain at $t = \pi\sqrt{l/g} \approx 1.00$ and the error settles
between $10^{-2}$ and $10^{-1}$, growing only mildly over time. Beyond the training horizon
($t \approx 1.00$), the error of the symplectic kernel predictor remains bounded, with the small
oscillations typical of symplectic schemes. For $\Delta T = 0.05$, its accuracy is comparable to
that of the implicit midpoint scheme with $\Delta T = 0.1$ up to $T = 6$. Note that these results still consistently indicate very good performance of all the symplectic schemes, as a non-symplectic integrator, e.g., explicit Euler often produces relative error several orders of magnitude larger than 100\% \cite{hairer2006,hairer2012numerical,chyba2009role,csillik2004symplectic}.

\noindent
To highlight the generalization capabilities, in \Cref{fig:pendulum-gen-trajectories} we compare
one pendulum trajectory obtained from the kernel scheme (solid blue, $\Delta T = 0.025$) with the
reference solution (orange dashed), for both the angle $q(t)$ (top panel) and the angular momentum
$p(t)$ (bottom panel).
\begin{figure}[h]
    \centering
    \includegraphics[width=0.7\linewidth]{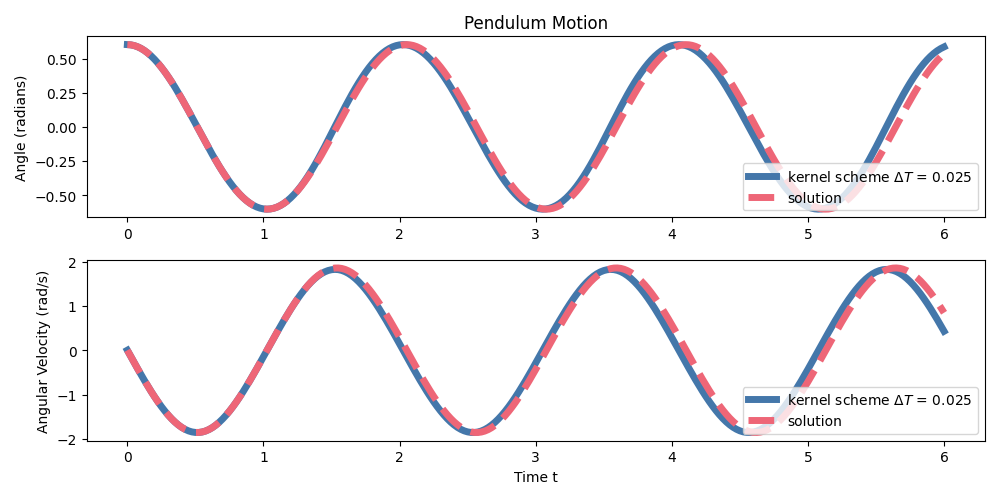}
    \caption{Pendulum generalization test: comparison of one trajectory obtained from the kernel scheme ($\Delta T = 0.025$) with the reference solution.}
    \label{fig:pendulum-gen-trajectories}
\end{figure}
\noindent
The two curves almost coincide over the entire time interval, demonstrating very good
generalization beyond the training domain: the amplitudes and qualitative shape of the oscillations
are preserved, and no spurious growth or unphysical behavior is observed. The remaining discrepancy
is dominated by a slight phase shift between predictor and reference, rather than by an incorrect
or nonphysical trajectory.

\subsection{Nonlinear spring--mass chain with fixed ends}

We next consider a nonlinear spring--mass chain, that is, a row of point masses connected by springs between two fixed walls. Let $n\in\mathbb{N}$ point masses be arranged on a line with displacements 
\[
q=(q_1,\dots,q_n)^\top\]
that are measured from the equilibrium configuration,
momenta
\[ p=(p_1,\dots,p_n)^\top,
\]
and mass matrix $M=\mathrm{diag}(m_1,\dots,m_n)$. In the experiments below we take $m_1=\dots=m_n=1$, so that $p = M\dot q = \dot q$. Fixed ends are enforced via virtual nodes $q_0\equiv 0$ and $q_{n+1}\equiv 0$. The spring elongations are given by
\[
\delta_i(q):=q_{i+1}-q_i,\qquad i=0,\dots,n,
\]
which can be written compactly as $\delta = Bq$ with
\[
B =
\begin{bmatrix}
  1 & 0 & 0 & \cdots & 0 \\
 -1 & 1 & 0 & \cdots & 0 \\
  0 & -1 & 1 & \ddots & \vdots \\
  \vdots & \ddots & \ddots & \ddots & 0 \\
  0 & \cdots & 0 & -1 & 1 \\
  0 & \cdots & \cdots & 0 & -1
\end{bmatrix} \in \mathbb{R}^{(n+1)\times n},
\]
so that
\[
(Bq)_0 = q_1,\qquad (Bq)_i = q_{i+1}-q_i\ (1\le i\le n-1),\qquad (Bq)_n = - q_n,
\]
and hence $Bq=(\delta_0,\dots,\delta_n)^\top$.

\noindent
Given $C^1$ spring potentials $f_i:\mathbb{R}\to\mathbb{R}$ and
\[
\sigma(z):=\left(f_0'(z_0),\dots,f_n'(z_n)\right)^\top,\qquad z=(z_i)_{i=0}^n\in\mathbb{R}^{n+1},
\]
the potential energy and its gradient are
\[
V(q)=\sum_{i=0}^n f_i\left((Bq)_i\right),\qquad
\nabla_q V(q)=B^\top\sigma(Bq).
\]
We consider identical quartic spring potentials of the form
\[
f_i(\delta_i)=\tfrac12 \alpha \delta_i^2+\tfrac{\beta}{4} \delta_i^4,
\qquad
\sigma_i(\delta_i)=f_i'(\delta_i)=\alpha \delta_i+\beta \delta_i^3,
\]
with parameters $\alpha=1$ and $\beta=0.25$ for all $i=0,\dots,n$. In the absence of damping, the equations of motion read
\[
M\ddot q + B^\top\sigma(Bq)=0.
\]
Introducing the state $x=(q,p)$ and the Hamiltonian
\[
\mathcal H(q,p)=\tfrac12 p^\top M^{-1}p + V(q),
\]
we obtain the canonical Hamiltonian system
\[
\dot x = J_{2n} \nabla H(x),
\]
with the standard symplectic matrix $J_{2n}$. In the numerical experiments below we fix $n=3$.

\noindent
We study two training scenarios that differ in the sampling of initial conditions. In both cases we first draw candidate states from a bounded box and retain only those below a prescribed energy level. Specifically, let
\[
\hat\Omega=\left([-q_{\max}, q_{\max}]^{ n}\times[-p_{\max}, p_{\max}]^{ n}\right)\subset\mathbb{R}^{2n},
\qquad
q_{\max}=0.5,\ \ p_{\max}=0.5,
\]
and define the energy-bounded subset
\[
\Omega=\left\{(q,p)\in\hat\Omega:\ \mathcal H(q,p)\le H_{\max}\right\},
\qquad
\mathcal H(q,p)=\tfrac12 p^\top M^{-1}p+\sum_{i=0}^{n} f_i\left(\delta_i(q)\right),
\quad H_{\max}=0.5 .
\]
We generate states
\[
(q^{(j)},p^{(j)})\sim\mathcal{U}(\hat\Omega),
\]
and retain only those satisfying the energy constraint:
\[
X=\left\{(q^{(j)},p^{(j)})\in\hat\Omega:\ H(q^{(j)},p^{(j)})\le H_{\max}\right\}\subset\Omega
\]
until $N_s = 10000$ states have been collected. 

\noindent
The training data for scenario~A are defined as
\[
\mathcal M_{\mathrm{A}}^{\Delta T}
    := \left\{\left(I_1 x_0 + I_2\Phi^{\Delta T}(x_0), J_{2n}^{\top} \frac{\Phi^{\Delta T}(x_0)-x_0}{\Delta T}\right):\ x_0\in X \right\},
\]
that is, pairs of states at times $0$ and $\Delta T$ along trajectories starting from $x_0\in X$. In scenario~B, we restrict the initial conditions to a half-space in momentum,
\[
\Omega^- = \left\{(q,p)\in\Omega:\; p_2\le 0\right\},
\]
and construct
\[
\mathcal M_{\mathrm{B}}^{\Delta T}
    := \left\{\left(I_1 x_0 + I_2\Phi^{\Delta T}(x_0), J_{2n}^{\top} \frac{\Phi^{\Delta T}(x_0)-x_0}{\Delta T}\right) :\ x_0\in X\cap \Omega^- \right\}.
\]
In both cases, $x(\Delta T;x_0)$ is approximated by the implicit midpoint rule with micro time step $\Delta t$.

\noindent
For evaluation we draw $10$ test initial conditions independently of the training data according to
\[
q_0\sim\mathcal{U} \left([0, q_{\max}]^3\right),\qquad p_0=0,
\]
and integrate each test trajectory up to $T=10.0$ using
(i) the proposed symplectic kernel predictor,
(ii) the implicit midpoint method with macro time step $\Delta T$, and
(iii) a high-fidelity reference solution obtained by the implicit midpoint rule with micro time
step $\Delta t$.

\noindent Note, that we have
\[
\nabla^2\Ham(q,p)=
\begin{pmatrix}
B^\top \textrm{diag}  \left(1+3\beta (Bq)_i^2\right)_{i=0}^n B & 0\\[2pt]
0 & I_n
\end{pmatrix},
\qquad
L_K\le \max\left\{1,\ \|B\|_2^2\left(1+3\beta \delta_K^2\right)\right\},
\]
with $\delta_K:=\sup_{(q,p)\in K}\|Bq\|_\infty$. \noindent For $n=3$ and $q_i\in[-0.5,0.5]$, each elongation is a difference of two coordinates (or a boundary value),
\[
\delta_0=q_1,\qquad \delta_1=q_2-q_1,\qquad \delta_2=q_3-q_2,\qquad \delta_3=-q_3,
\]
hence $|\delta_i|\le 1$ for all $i$ and therefore $\delta_K=\sup_{(q,p)\in K}\|Bq\|_\infty\le 1$.
Moreover,
\[
\|B\|_2^2=\|B^\top B\|_2=\lambda_{\max}(B^\top B).
\]
Since $B^\top B$ is tridiagonal with diagonal entries $2$ and off-diagonals $-1$, the Gershgorin
circle theorem yields $\sigma(B^\top B)\subset[0,4]$, hence
\[
\|B^\top B\|_2=\lambda_{\max}(B^\top B)\le 4
\qquad\text{and}\qquad
\|B\|_2^2\le 4.
\]
Therefore,
\[
L_K \le 4\left(1+\tfrac34\right)=7,
\qquad
\Delta T_K^\star=\min\left\{T,\ \tfrac{\log 2}{L_K}\right\}\ \ge\ \tfrac{\log 2}{7}
\approx \ 9.90\times 10^{-2}.
\]

% --- Figure: mass-spring chain, base experiment ---
\begin{figure}[h]
  \centering
  \begin{subfigure}[t]{0.64\textwidth}
    \centering
    \includegraphics[width=\textwidth]{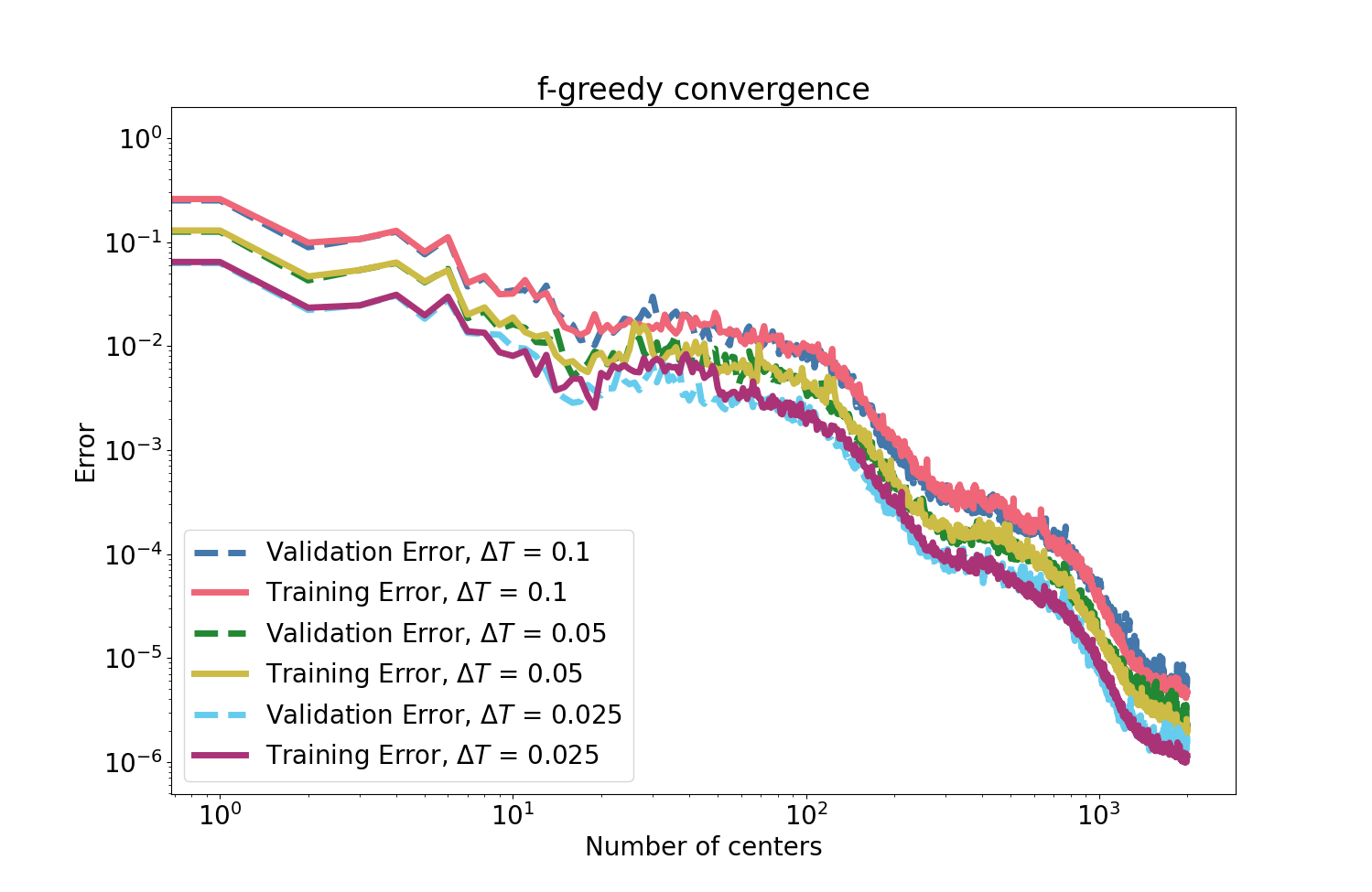}
    \caption{$f$-greedy interpolation error versus the number of selected centers (mass--spring chain), training (solid) and validation (dashed).}
    \label{fig:fgreedy-convergence-mass-spring}
  \end{subfigure}\hfill
  \begin{subfigure}[t]{0.35\textwidth}
    \centering
    \includegraphics[width=\textwidth]{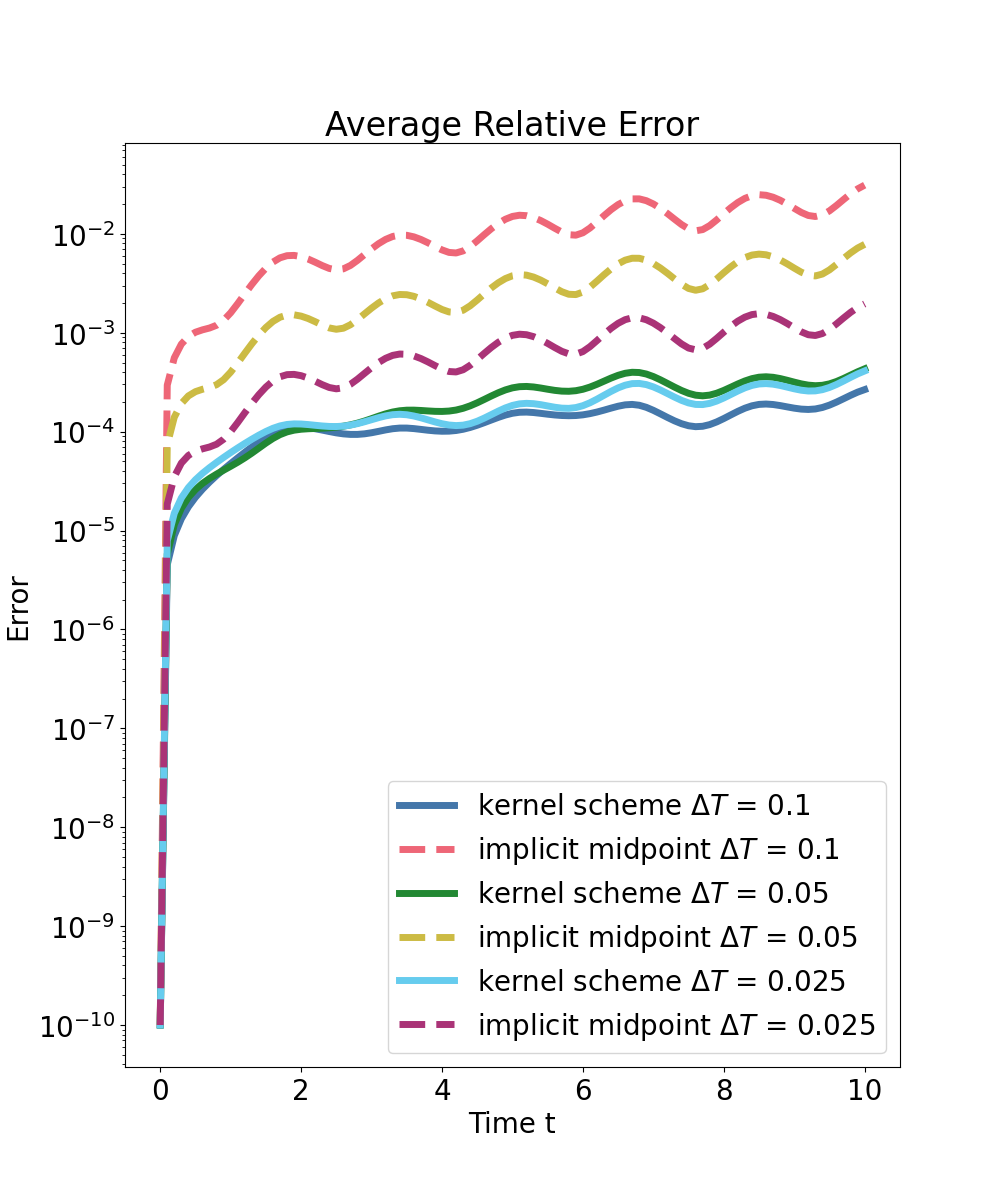}
    \caption{Relative error over time (mass--spring chain), kernel predictor (solid) vs.\ implicit midpoint (dashed).}
    \label{fig:rel-reduction-error-mass-spring}
  \end{subfigure}
  \caption{Mass--spring chain: (a) $f$-greedy convergence vs.\ centers; (b) relative error over time.}
\end{figure}

\noindent
As a first numerical result we report in \Cref{fig:fgreedy-convergence-mass-spring} the convergence
of the greedy center selection. The maximum residual error is shown as a function of the number of centers
$m$ for $\Delta T \in \{0.1,0.05,0.025\}$. For all three macro time steps, the error decreases
steadily from approximately $10^{-1}$ to about $10^{-3}$ around $m \approx 10^2$, and reaches values
close to $10^{-6}$ by $m \approx 10^3$. Training and validation curves are very close over the
entire range of $m$, with the validation error only slightly above the training error, indicating
good generalization and no apparent overfitting. As in the previous examples, smaller macro time
steps yield slightly lower error levels, but the qualitative decay behavior is essentially the same
for all $\Delta T$.

\noindent
We then apply the kernel scheme to the $10$ test trajectories and show the average relative
error in \Cref{fig:rel-reduction-error-mass-spring}. Over the full time interval
$[0,10]$, the kernel predictor achieves errors between approximately $10^{-5}$ and a few
$\times 10^{-4}$, with only mild temporal oscillations. The structure-preserving baseline (implicit
midpoint with step size $\Delta T$) is systematically less accurate: its error ranges from about
$10^{-3}$ to a few $\times 10^{-2}$, i.e., typically one to almost two orders of magnitude larger
than that of the kernel predictor at the same $\Delta T$, while displaying a similar oscillatory
pattern. Reducing $\Delta T$ lowers all implicit-midpoint curves as expected, but the kernel
predictor remains more accurate for every step size and throughout the entire time interval.

\noindent
We next repeat the experiment with the reduced training set $\mathcal M_{\mathrm{B}}^{\Delta T}$ (and a maximum of $1000$ centers); see
\Cref{fig:fgreedy-convergence-mass_spring-gen,fig:rel-reduction-error-mass-spring-gen}.

% --- Figure: mass-spring chain, reduced-training experiment ---
\begin{figure}[h]
  \centering
  \begin{subfigure}[t]{0.64\textwidth}
    \centering
    \includegraphics[width=\textwidth]{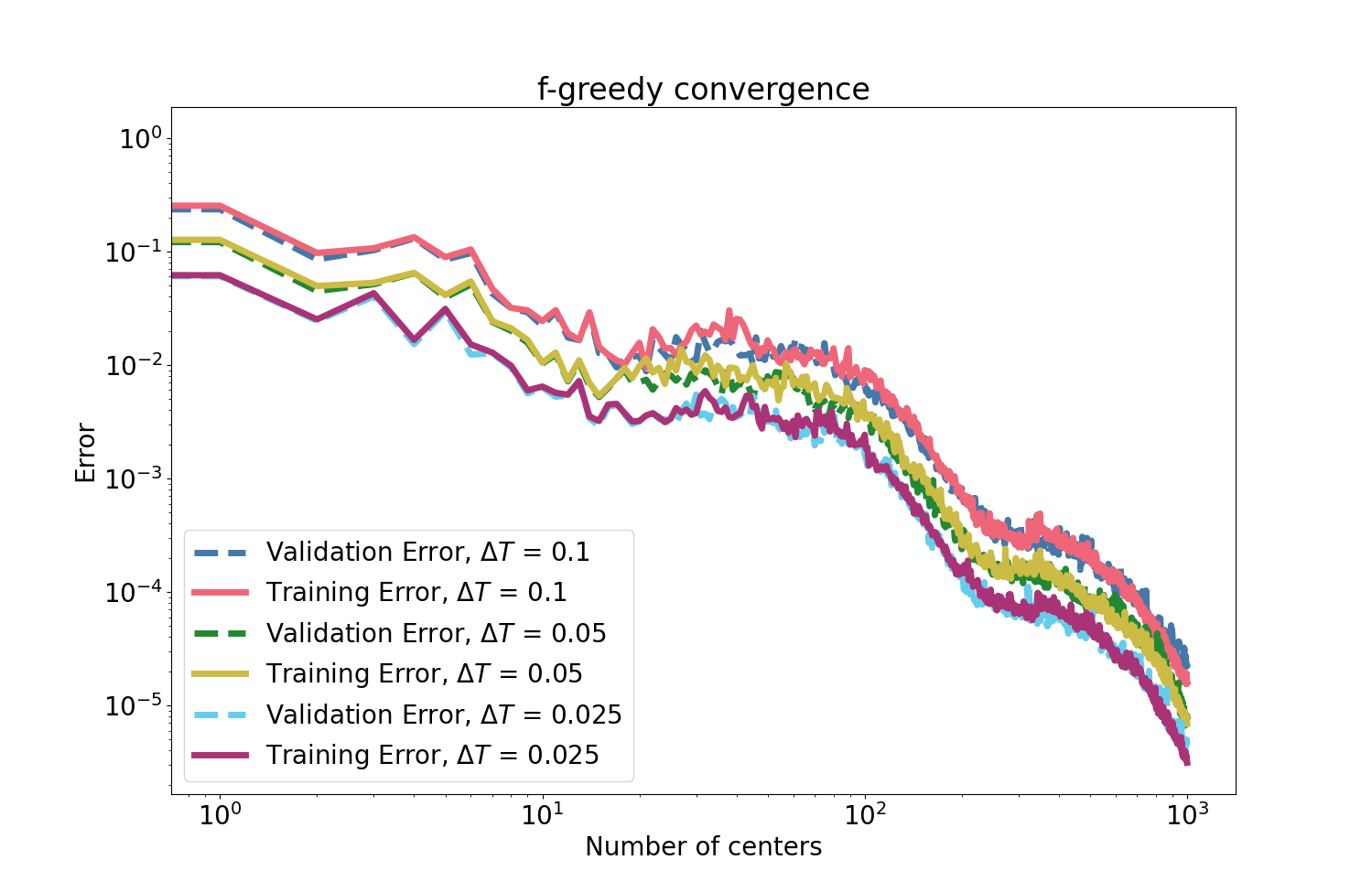}
    \caption{$f$-greedy interpolation error versus the number of selected centers (mass--spring chain, reduced training set), training (solid) and validation (dashed).}
    \label{fig:fgreedy-convergence-mass_spring-gen}
  \end{subfigure}\hfill
  \begin{subfigure}[t]{0.35\textwidth}
    \centering
    \includegraphics[width=\textwidth]{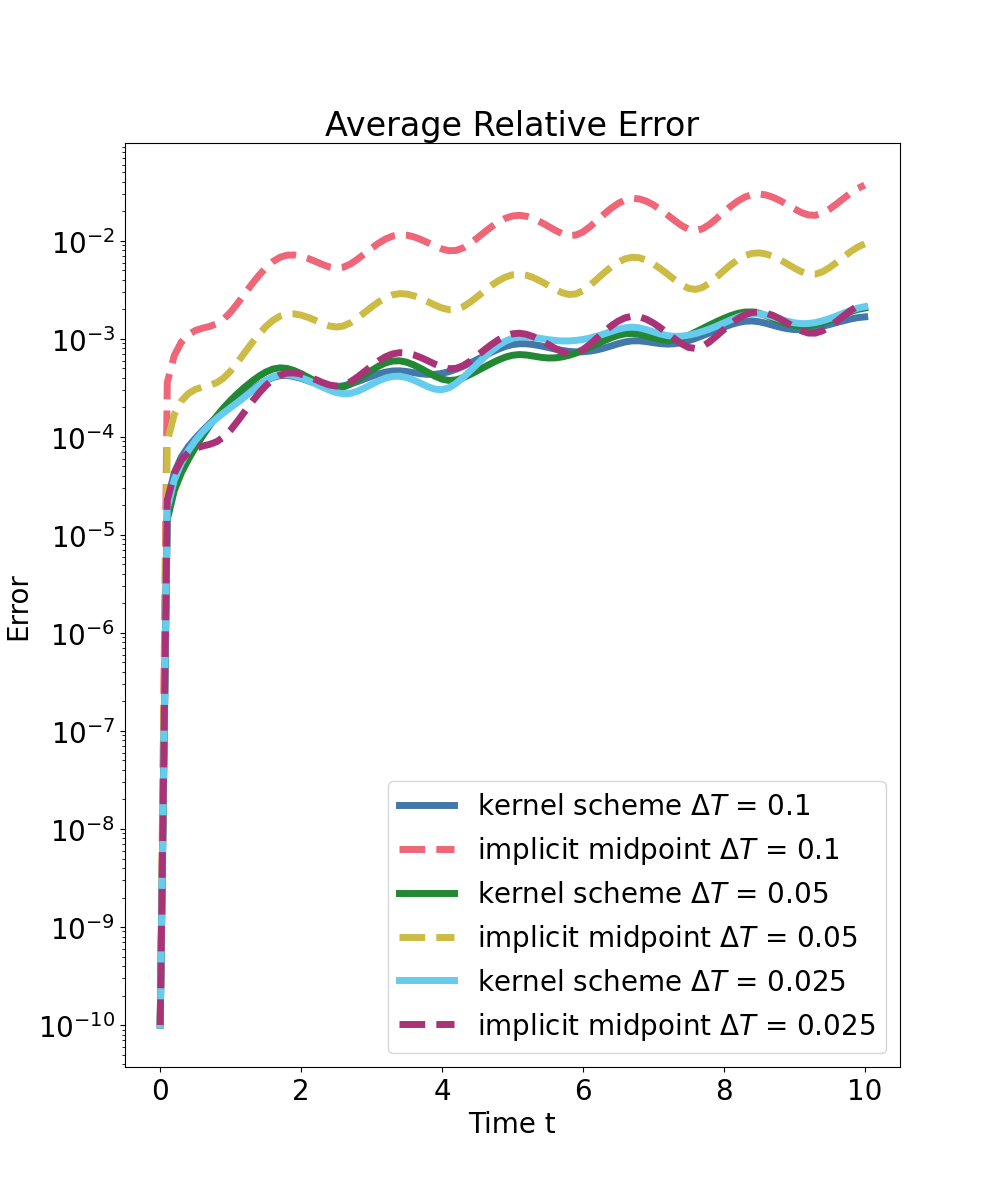}
    \caption{Relative error over time (mass--spring chain, reduced training set), kernel predictor (solid) vs.\ implicit midpoint (dashed).}
    \label{fig:rel-reduction-error-mass-spring-gen}
  \end{subfigure}
  \caption{Mass--spring chain (reduced training set): (a) $f$-greedy convergence vs.\ centers; (b) relative error over time.}
\end{figure}
\noindent
In \Cref{fig:fgreedy-convergence-mass_spring-gen}, the maximum residual error again decreases
monotonically with the number of centers for all $\Delta T \in \{0.1,0.05,0.025\}$. The curves
start between approximately $10^{-1}$ and $3\times 10^{-1}$ and drop to the $10^{-3}$ level around
$m \approx 100$, reaching about $10^{-6}$ by $m \approx 10^3$. Training and validation errors
remain close throughout the range of $m$, with the validation curves only slightly above the
training curves, again indicating good generalization and no visible overfitting. As before,
smaller macro time steps yield lower error levels, but the decay behavior is essentially identical
for all three $\Delta T$.

\noindent
In \Cref{fig:rel-reduction-error-mass-spring-gen}, the average relative error over
$10$ random initial conditions remains well controlled on $[0,10]$, with the kernel scheme
achieving errors between roughly $10^{-5}$ and $10^{-3}$ depending on $\Delta T$. The
implicit-midpoint baseline is consistently less accurate, with errors between about $10^{-4}$ and
a few $\times 10^{-2}$, i.e., typically one to almost two orders of magnitude larger at matched
$\Delta T$, while again exhibiting similar oscillatory patterns. The kernel predictor
generalizes well along the entire trajectories: its error does not exhibit uncontrolled growth and
remains below the baseline for most of the time. For each $\Delta T $, the kernel scheme is
approximately as accurate as the implicit midpoint rule with the smallest macro time step
$\Delta T = 0.025$ over the full interval up to $T = 10$.

\noindent
The generalization experiment for the spring--mass chain (training on
$\Omega^-=\{(q,p)\in\Omega: p_2\le 0\}$ and testing on $p_2>0$) performs almost as well as the
fully sampled case. This behavior can be related to the mechanical Hamiltonian structure
$\mathcal H(q,p)=\tfrac12 p^\top M^{-1}p+V(q)$, which depends on $p$ only through the quadratic kinetic
energy and is therefore invariant under $p\mapsto -p$. The associated Hamiltonian vector field
changes sign when $p$ is reversed. Consequently, the ``unseen'' half-space $\{p_2>0\}$ essentially
contains the same trajectories as $\{p_2<0\}$, traversed with opposite momentum. In addition, the
symplectic Euler argument $\xi_j=(q_0^{(j)},p_{\Delta T}^{(j)})$ already contains many samples with positive
$p_{\Delta T,2}$ even when $p_{0,2}\le 0$, and the learning targets are difference quotients of
this sign-reversing vector field. Combined with the weakly nonlinear, small-amplitude regime
considered here, these symmetries make extrapolation across $p_2=0$ comparatively benign and help
explain the strong generalization performance.

\subsection{Discretized wave equation}
We consider the one-dimensional wave equation on a finite interval. For time $t \in I = (0,T)$
and spatial variable $\xi \in \Omega := (0,L)$, find $u : \bar I \times \bar \Omega \to \R$ such that
\begin{align*}
u_{tt}(t,\xi) &= c^2 u_{\xi\xi}(t,\xi) && \text{in } I\times\Omega,\\
u(t,\xi) &= 0 && \text{on } I\times\partial\Omega,\\
u(0,\xi) &= u_0(\xi),\qquad
u_t(0,\xi) = v_0(\xi) && \text{in } \Omega,
\end{align*}
with wave speed $c = 0.3$ domain length $L = 1$ and end time $T = 6$ in the numerical experiments.

\noindent
We discretize $\Omega$ by a uniform grid $\{\xi_i\}_{i=1}^N \subset (0,L)$ with $N = 1000$ and enforce homogeneous
Dirichlet data at $\xi = \{0,L\}$. Let $D_{\xi\xi} \in \R^{N\times N}$ be the standard (symmetric
positive definite) central-difference matrix for $-\partial_{\xi\xi}$ with homogeneous Dirichlet
boundaries. With $u(t) \in \R^N$ the vector of nodal values, the semi-discrete equation reads
\[
\ddot u(t) = -c^2 D_{\xi\xi} u(t).
\]
Introduce the phase–space state
\[
x(t)=\begin{bmatrix} q(t) \\ p(t) \end{bmatrix}
=\begin{bmatrix} u(t) \\ \dot u(t) \end{bmatrix}\in\R^{2N},
\]
and the quadratic Hamiltonian
\[
\Ham(x)=\tfrac12 p^\top p+\tfrac12 c^2 q^\top D_{\xi\xi} q.
\]
\noindent Then the dynamics take canonical Hamiltonian form
\[
\dot x(t)
= J_{2N} \nabla_x \Ham\big(x(t)\big)
= J_{2N} H x(t),\qquad
J_{2N}=\begin{bmatrix}0&I_N\\ -I_N&0\end{bmatrix},
\]
with the symmetric matrix
\[
H=\begin{bmatrix}c^2 D_{\xi\xi} & 0\\[2pt] 0 & I_N\end{bmatrix},
\qquad
x(0)=\big[u_0(\xi_1),\dots,u_0(\xi_N),\ v_0(\xi_1),\dots,v_0(\xi_N)\big]^\top.
\]
\noindent Note that, since $\Ham$ is quadratic, the hypotheses of \Cref{thm:quadratic} apply; consequently, a (global) generating function exists for all step sizes $\Delta T$ outside a discrete resonance set.

\noindent 
To compress the $2N$-dimensional Hamiltonian system while preserving structure, we use symplectic
model order reduction \cite{Peng2016,Buchfink2019,afkham2017structure} to project onto a symplectic
subspace spanned by $V \in \R^{2N\times 2n}$, $n \ll N$, satisfying $V^\top J_{2N} V = J_{2n}$.
Reduced coordinates are obtained with the symplectic inverse
\[
x_{\mathrm{red}}(t) = V^{+}x(t),\qquad
V^{+}:=J_{2n}^\top V^\top J_{2N},\qquad
x(t)\approx V x_{\mathrm{red}}(t).
\]
For the quadratic Hamiltonian $\Ham(x)=\tfrac12 x^\top H x$ the reduced Hamiltonian is
\[
\Ham_{\mathrm{red}}(x_{\mathrm{red}})
=\tfrac12 x_{\mathrm{red}}^\top \big(V^\top H V\big) x_{\mathrm{red}},
\]
and the reduced dynamics remain a canonical Hamiltonian system:
\[
\dot x_{\mathrm{red}}(t)
= J_{2n} \nabla_{x_{\mathrm{red}}} \Ham_{\mathrm{red}}(x_{\mathrm{red}})
= J_{2n} \big(V^\top H V\big) x_{\mathrm{red}}(t),\qquad
x_{\mathrm{red}}(0)=V^{+}x(0).
\]

\noindent
In practice, we construct $V$ via the complex SVD (cSVD): collect snapshots
$Q=[q(t_k)]$ and $P=[p(t_k)]$, form the complex snapshot matrix $Y := Q + \mathrm{i}P$, compute
the thin SVD $Y \approx U\Sigma W^*$, and set
\[
V=
\begin{bmatrix}
\operatorname{Re}U & -\operatorname{Im}U\\[2pt]
\operatorname{Im}U & \operatorname{Re}U
\end{bmatrix}.
\]
This yields a structure-preserving reduced-order model that conserves the quadratic energy and
inherits the stability properties of the full system.

\noindent
In this experiment we do  not sample $(q,p)$ from a hypercube.
Instead, we assemble initial conditions from single sine modes with unit amplitude. Let
\[
B := 2,\qquad
\mathcal N := \{1,\dots,B\}.
\]
On the Dirichlet grid $\xi_i = i h$ with $h = L/(N+1)$ ($i=1,\dots,N$), define the discrete
sine vectors
\[
\Phi_n := \left(\sin(n\pi \xi_i/L)\right)_{i=1}^N \in \R^N,\qquad n\in\mathcal N.
\]
We introduce an enumeration $\{(a_j,b_j)\}_{j=1}^{B^2}  := \mathcal N \times \mathcal N$. For each pair we set
\[
q_0^{(j)} = \Phi_{a_j},\qquad p_0^{(j)} = \Phi_{b_j},\qquad
x_0^{(j)} := \begin{bmatrix} q_0^{(j)} \\[2pt] p_0^{(j)} \end{bmatrix}\in\R^{2N},
\quad j=1,\dots,B^2.
\]

\noindent
We assemble the phase-space snapshot matrix
\[
X := \left[x_0^{(1)},\dots,x_0^{(B^2)}\right]\in\R^{2N\times B^2},
\]
and apply the cSVD procedure to $X$ to obtain the symplectic basis
$V\in\R^{2N\times 2n}$ as described above.

\noindent
In the reduced system, we then sample initial conditions directly in the reduced
phase space. More precisely, we draw
\[
z_0 \sim \mathcal{U}\left([{-}z_{\max},z_{\max}]^{2n}\right),
\]
and define the corresponding full initial state as $x_0 = V z_0$.
We retain only those satisfying the energy constraint
\[
Z_{\mathrm{train}}=\left\{z_0^j\in\hat\Omega:\ H(z_0^j)\le H_{\max}\right\}\subset\Omega
\]
with $H_{\max} = 5$ until $N_s = 20000$ states have been collected.

\noindent The training set for the kernel predictor is then given by
\[
\mathcal M^{\Delta T}
    := \left\{\left(I_1 z_0 + I_2\Phi^{\Delta T}(z_0), J_{2n}^{\top} \frac{\Phi^{\Delta T}(z_0)-z_0}{\Delta T}\right) :\ z_0\in Z_{\mathrm{train}} \right\}.
\]
This two-stage procedure (mode-based generation
of full-order initial data, followed by cSVD-based reduction and hypercube
sampling in reduced coordinates) concentrates the basis on
low-frequency wave patterns while preserving flexibility in the reduced
sampling used to train the kernel predictor.

\noindent
For testing we draw $10$ test initial conditions independently of the training
set, using the same mode-based sampling procedure as above. Each test
trajectory is then evolved up to $T=6.0$ using  
(i) the proposed symplectic kernel predictor,  
(ii) the implicit midpoint method with macro time step size $\Delta T$, and  
(iii) a high-fidelity reference solution computed by the implicit midpoint rule
with micro time step $\Delta t$.

\noindent
As a first numerical result, we report in \Cref{fig:fgreedy-convergence-wave}
the convergence behavior of the $f$-greedy center selection.

% --- Figure: wave model, base experiment ---
\begin{figure}[h]
  \centering
  \begin{subfigure}[t]{0.64\textwidth}
    \centering
    \includegraphics[width=\textwidth]{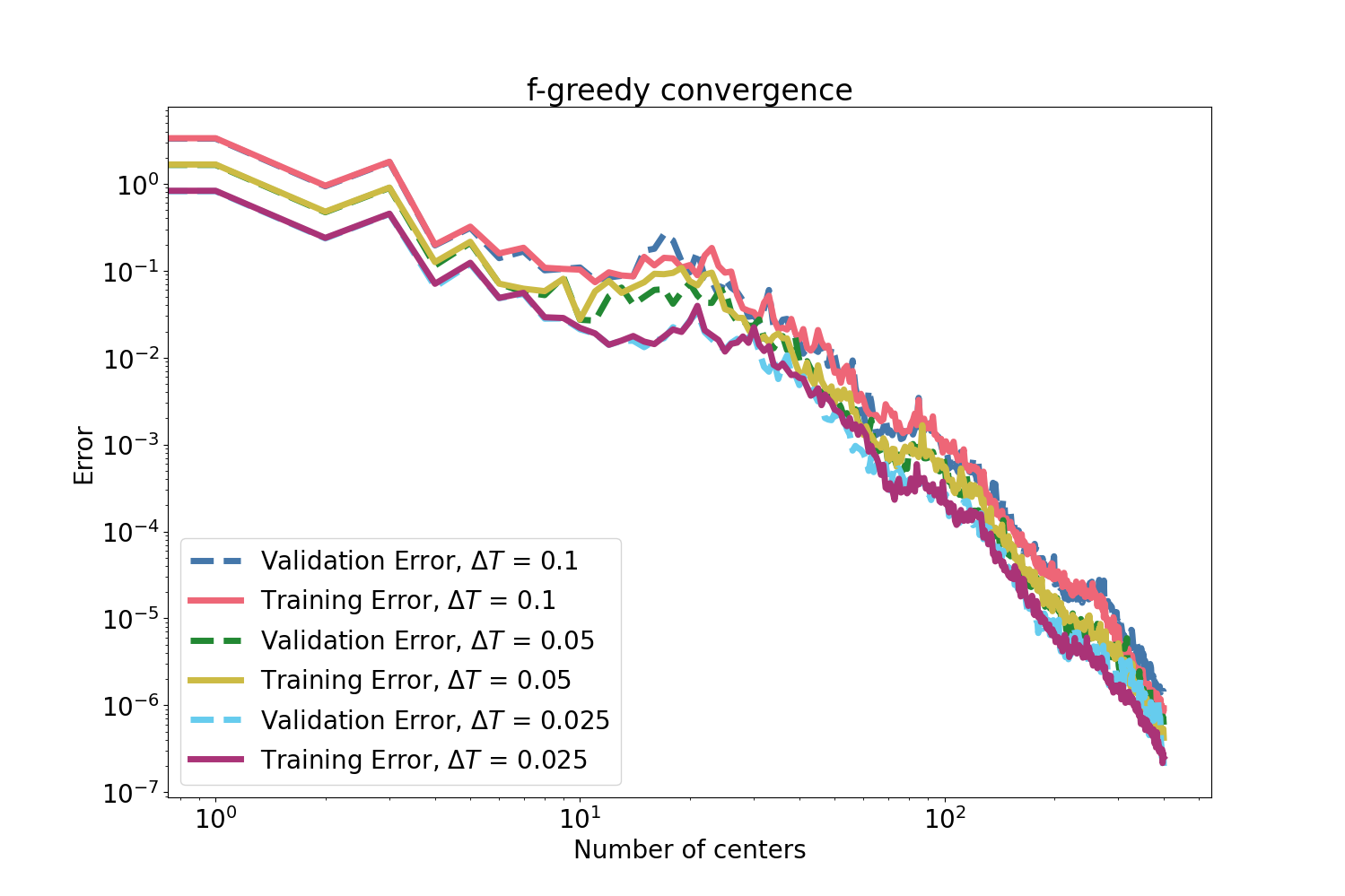}
    \caption{$f$-greedy interpolation error versus the number of selected centers (wave model), training (solid) and validation (dashed).}
    \label{fig:fgreedy-convergence-wave}
  \end{subfigure}\hfill
  \begin{subfigure}[t]{0.35\textwidth}
    \centering
    \includegraphics[width=\textwidth]{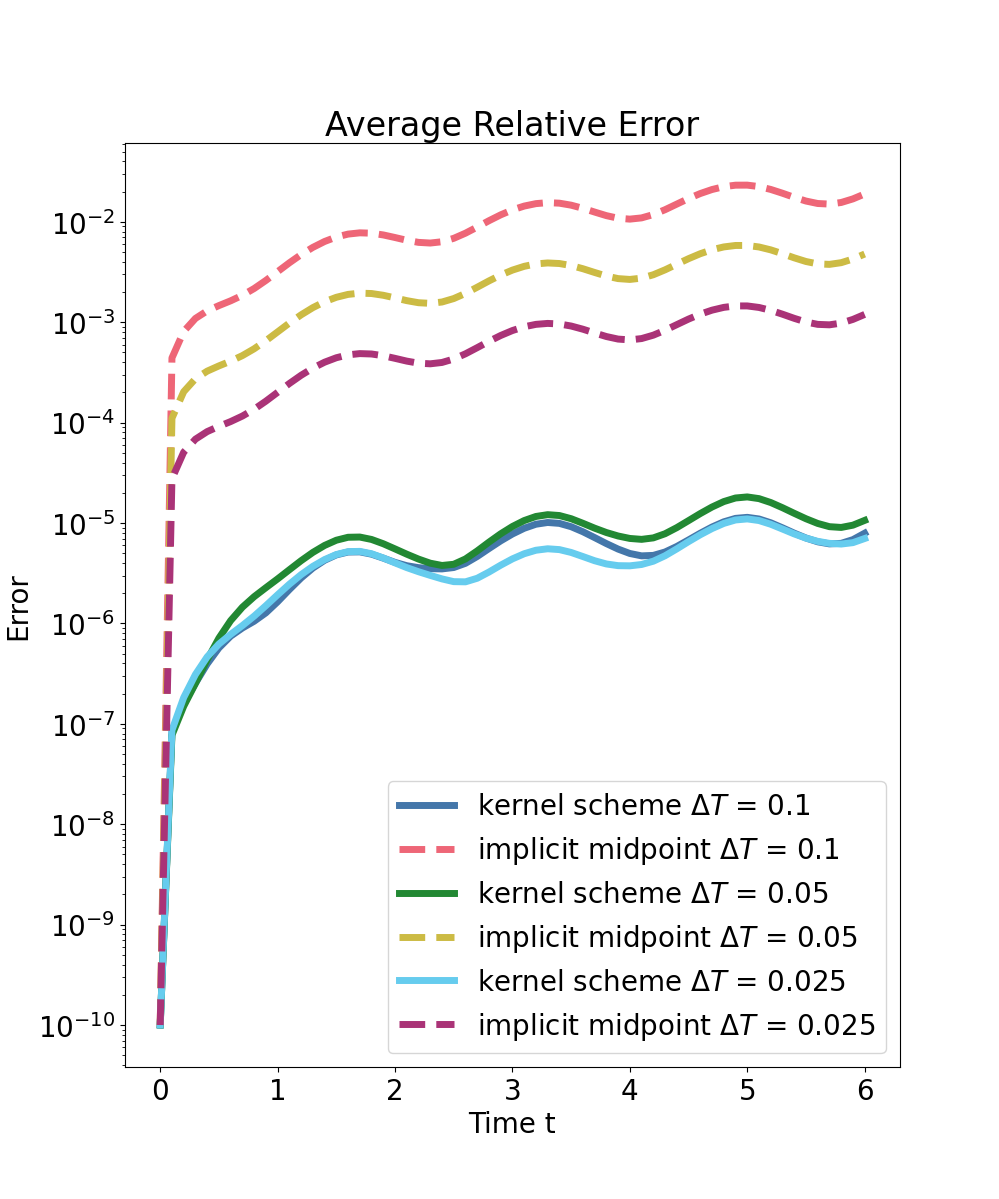}
    \caption{Average relative error over time (wave model), kernel predictor (solid) vs.\ implicit midpoint (dashed).}
    \label{fig:rel-reduction-error-wave}
  \end{subfigure}
  \caption{Reduced 1D wave model: (a) $f$-greedy convergence vs.\ centers; (b) average relative error over time.}
\end{figure}

\noindent In \Cref{fig:fgreedy-convergence-wave} we report the maximum residual error as a
function of the number of centers for $\Delta T\in\{0.1,0.05,0.025\}$. For all
three macro time steps, the curves exhibit a short pre-asymptotic regime for small
numbers of centers (up to roughly $m\approx 10$), followed by an almost
straight-line decay on the log--log scale. This indicates algebraic convergence of
the interpolation error over nearly three orders of magnitude: starting from
values of order $10^0$–$10^1$ at $m=1$, the error drops below $10^{-3}$ after
about $m\approx 30$ centers and reaches values close to $10^{-6}$ by
$m\approx 300-400$. Training and validation errors remain very close for all
$\Delta T$, with the validation curves lying slightly above the training
curves, indicating good generalization and no overfitting. As
expected, smaller macro time steps yield somewhat lower error levels across the
entire range of $m$, but the qualitative decay behavior is essentially the
same for all three step sizes.

\noindent
In \Cref{fig:rel-reduction-error-wave} we show the average relative error for the ten test trajectories over the time interval $[0,6]$. The kernel
predictor achieves errors in the range $10^{-7}$–$10^{-5}$: after a rapid
initial increase from machine precision, the error levels off and exhibits
small temporal oscillations, characteristic of symplectic updates for
oscillatory systems. The three kernel curves for
$\Delta T\in\{0.1,0.05,0.025\}$ nearly coincide, indicating that the method is
robust with respect to the macro time step size in this reduced setting. In
contrast, the implicit midpoint baseline with the same macro time step experiences
significantly larger errors, ranging from about $10^{-3}$ up to roughly
$10^{-1}$ by $t=6$, with a clear ordering:
$\Delta T=0.1$ (largest error), $\Delta T=0.05$, and $\Delta T=0.025$
(smallest error). Thus, for every tested macro time step the kernel predictor
reduces the error by roughly three orders of magnitude compared to the
structure-preserving baseline, while maintaining stable, oscillatory error
profiles over the entire time horizon.

\section{Conclusion and Outlook}\label{Sec:Conclusion}
In this work we proposed a kernel-based surrogate model for Hamiltonian dynamics that is
symplectic by construction and tailored to large time horizons. The core idea is to learn a
scalar potential $s$ whose gradient, evaluated on a mixed argument $(q_0,p_{\Delta T})$, enters an
implicit symplectic-Euler update. This yields a discrete flow map that preserves the canonical
symplectic structure exactly and inherits the favorable long-time
behavior of symplectic integrators. By formulating the learning task as a HB
interpolation problem for gradients, we can leverage the RKHS framework to construct surrogates
with rigorous approximation guarantees and controlled complexity.

\noindent On the theoretical side, we established existence results for the target potential underlying the
symplectic Euler mixed argument. For general Hamiltonian systems we derived conditions on the exact flow
$\Phi^{\Delta T}$ under which there exists a generating function $S^{\Delta T}(q,P)$ whose gradients reproduce
the discrete flow increment on a mixed $(q_0,p_{\Delta T})$ chart. These conditions can be verified
locally under a step-size restriction on $\Delta T$ based on uniform invertibility of the
$(q,p)\mapsto(q,P)$ mapping on compact forward-invariant sets, and globally for quadratic
Hamiltonians outside a discrete resonance set. In addition, we extended existing convergence
theory for greedy kernel approximation to the gradient HB setting, proving
residual bounds.

\noindent On the algorithmic side, we combined the proposed symplectic kernel predictor with model order reduction
(via the cSVD) to obtain a low-dimensional canonical surrogate model for a
high-dimensional discretized PDE. This produces reduced models that remain Hamiltonian and
energy-conserving, while significantly lowering the cost of both training and online evaluation of
the kernel surrogate. In all components, the construction preserves the underlying geometric
structure of the original system.

\noindent The numerical experiments on three benchmark problems---the mathematical pendulum, a nonlinear
spring--mass chain with fixed ends, and a discretized wave equation with symplectic MOR---confirm
the effectiveness of the proposed approach. Across all test cases, the greedy procedure
produces sparse surrogates with nearly algebraic decay of the interpolation error over several
orders of magnitude, and training/validation errors remain closely aligned, indicating good
generalization. In long-time simulations, the symplectic kernel predictor consistently outperforms
a structure-preserving baseline (implicit midpoint with the same macro step), typically reducing
trajectory errors by two to three orders of magnitude while maintaining bounded, oscillatory error
profiles characteristic of symplectic schemes, even in generalization settings where parts of
phase space are unseen during training.

\noindent The present work focuses on learning a time–$\Delta T$ flow map for a fixed macro time
step, and on kernel models with a fixed prediction horizon. An interesting direction for future
research is to relax this constraint and treat the macro step size as an additional variable.
One option is to learn a family of generating functions $S^{\Delta T}$ parameterized by
$\Delta T$, or to augment the input with a time parameter and equip it with a suitable kernel,
thereby obtaining a single surrogate that can be evaluated for a continuum of macro step sizes.
Similarly, the prediction horizon $T$ of the kernel model can be promoted to an explicit parameter,
allowing the construction of surrogates that learn the joint dependence on state and time and
thus support variable-step or adaptive-in-time structure-preserving prediction. We expect such
extensions to broaden the applicability of the proposed framework, in particular for problems
where accuracy and efficiency requirements naturally call for adaptive macro time stepping.

\section*{Funding}
Funded by Deutsche Forschungsgemeinschaft (DFG,
German Research Foundation) under Germany´s
Excellence Strategy – EXC 2075/2 – 390740016.

\section*{Competing interests}
The authors report there are no competing interests to declare.

\section*{Appendix}

First, we focus on how the point-wise derivative error can be bounded by the derivative power function and the error in
the RKHS norm.

% ---------------- Lemma 1 ----------------
\begin{lemma}[Power-function control via \eqref{eqn:rep_deriv}]\label{lem:pf}
For every $m\ge 0$, $x\in \Omega$, and $\ell\in\mathcal J$,
\[
|\partial_\ell e_m(x)|
 = \big|\ip{e_m}{(I-\Pi_m) \partial_\ell^{(2)} k(\cdot,x)}\big|
 \le \norm{e_m} P_m(x,\ell).
\]
Consequently, $\max_{\ell\in\mathcal J}\sup_{x\in \Omega}|\partial_\ell e_m(x)|\le \norm{e_m} \sup_{x,\ell}P_m(x,\ell)$.
\end{lemma}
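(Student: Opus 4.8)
The plan is to combine three elementary facts: the derivative reproducing identity \eqref{eqn:rep_deriv}, the orthogonality of $e_m$ to $V_m$ that is built into the definition $s_m = \Pi_m u$, and the Cauchy--Schwarz inequality in $\Hk$.

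First I would recall that, since $k \in C^{2}(\Omega\times\Omega)$, the representer $\partial_\ell^{(2)} k(\cdot,x)$ belongs to $\Hk$ and the derivative reproducing property \eqref{eqn:rep_deriv} gives, for every $f \in \Hk$,
\[
\partial_\ell f(x) = \ip{f}{\partial_\ell^{(2)} k(\cdot,x)}.
\]
Applying this with $f = e_m = u - \Pi_m u$ yields $\partial_\ell e_m(x) = \ip{e_m}{\partial_\ell^{(2)} k(\cdot,x)}$.

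Next I would use that $\Pi_m$ is the orthogonal projector onto $V_m$, hence self-adjoint and idempotent, so that $e_m = (I-\Pi_m)u$ satisfies $\Pi_m e_m = 0$. Consequently, for any $g \in \Hk$,
\[
\ip{e_m}{g} = \ip{e_m}{(I-\Pi_m)g} + \ip{e_m}{\Pi_m g} = \ip{e_m}{(I-\Pi_m)g},
\]
because $\ip{e_m}{\Pi_m g} = \ip{\Pi_m e_m}{g} = 0$. Taking $g = \partial_\ell^{(2)} k(\cdot,x)$ gives the claimed identity for $|\partial_\ell e_m(x)|$.

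Finally, Cauchy--Schwarz together with the definition $P_m(x,\ell) = \norm{(I-\Pi_m)\partial_\ell^{(2)} k(\cdot,x)}$ yields $|\partial_\ell e_m(x)| \le \norm{e_m}\, P_m(x,\ell)$; taking the supremum over $x \in \Omega$ and $\ell \in \mathcal J$ then gives the stated consequence. I do not expect a genuine obstacle here: the only point deserving care is that the objects involved really lie in $\Hk$ and that the first-order partial derivative point-evaluation functionals are continuous on $\Hk$, which is precisely the standing hypothesis recorded around \eqref{eqn:rep_deriv} (namely $k \in C^{2}(\Omega\times\Omega)$ for a suitable RKHS).
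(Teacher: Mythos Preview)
Your proposal is correct and essentially identical to the paper's proof: both use the derivative reproducing identity \eqref{eqn:rep_deriv} for $e_m$, exploit $e_m\perp V_m$ (equivalently $\Pi_m e_m=0$ with $\Pi_m$ self-adjoint) to insert $(I-\Pi_m)$ in the second slot, and then apply Cauchy--Schwarz followed by a supremum. The only cosmetic difference is that you phrase the orthogonality via self-adjointness of $\Pi_m$, whereas the paper states $\ip{e_m}{\Pi_m\partial_\ell^{(2)}k(\cdot,x)}=0$ directly from $e_m\perp V_m$.
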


\begin{proof}
By the reproducing property \eqref{eqn:rep_deriv},
\[
\partial_\ell e_m(x)=\ip{e_m}{\partial_\ell^{(2)} k(\cdot,x)}.
\]
Let $V_m=\mathrm{span}\{\partial^{(2)}_{\ell_i}k(\cdot,x_i):i=1,\dots,m\}$ and let $\Pi_m$ be the orthogonal projector onto $V_m$. Since $s_m=\Pi_m u$, we have $e_m=u-s_m=(I-\Pi_m)u$, hence $e_m\perp V_m$ and in particular
\[
\ip{e_m}{\Pi_m \partial_\ell^{(2)} k(\cdot,x)}=0.
\]
Therefore,
\[
\partial_\ell e_m(x)
=\ip{e_m}{\partial_\ell^{(2)} k(\cdot,x)}
=\ip{e_m}{(I-\Pi_m) \partial_\ell^{(2)} k(\cdot,x)}.
\]
Taking absolute values and using Cauchy--Schwarz,
\[
|\partial_\ell e_m(x)|
=\big|\ip{e_m}{(I-\Pi_m) \partial_\ell^{(2)} k(\cdot,x)}\big|
\le \norm{e_m} \big\|(I-\Pi_m) \partial_\ell^{(2)} k(\cdot,x)\big\|_{\Hk}.
\]
By definition of the power function,
$P_m(x,\ell):=\big\|(I-\Pi_m) \partial_\ell^{(2)} k(\cdot,x)\big\|_{\Hk}$,
which gives the claimed inequality. Maximizing over $x\in \Omega$ and $\ell\in\mathcal J$ yields
\[
\max_{\ell\in\mathcal J}\sup_{x\in \Omega}|\partial_\ell e_m(x)|
\le \norm{e_m} \sup_{\ell\in\mathcal J}\sup_{x\in \Omega} P_m(x,\ell).
\]
\end{proof}

\noindent Next, we focus on quantifying the error decay in the RKHS norm. 

% ---------------- Lemma 2 ----------------
\begin{lemma}[Orthogonal update and projection error]\label{lem:orth}
Let
\begin{align}
a_m &:=  |\partial_{\ell_{m+1}} e_m(x_{m+1})| =
\big|\ip{e_m}{(I-\Pi_m)\partial^{(2)}_{\ell_{m+1}}k(\cdot,x_{m+1})}\big|,\\
b_m &:= P_m(x_{m+1},\ell_{m+1})
=\big\|(I-\Pi_m)\partial^{(2)}_{\ell_{m+1}}k(\cdot,x_{m+1})\big\|.
\end{align}
Then
\begin{align*}
s_{m+1} &= s_m+\frac{\ip{e_m}{(I-\Pi_m)\partial^{(2)}_{\ell_{m+1}}k(\cdot,x_{m+1})}}{b_m^{ 2}} 
\left((I-\Pi_m)\partial^{(2)}_{\ell_{m+1}}k(\cdot,x_{m+1})\right),\\
\norm{e_{m+1}}^{2}\ &=\ \norm{e_m}^{2}-\frac{a_m^{2}}{b_m^{2}}.
\end{align*}
\end{lemma}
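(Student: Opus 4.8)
The plan is to reduce both identities to elementary Hilbert-space bookkeeping for nested orthogonal projectors. Write $w_m := \partial^{(2)}_{\ell_{m+1}} k(\cdot,x_{m+1}) \in \Hk$ for the Riesz representer of the newly selected functional $f\mapsto\partial_{\ell_{m+1}} f(x_{m+1})$, and set $g_m := (I-\Pi_m)w_m$, so that $b_m = \norm{g_m}$. Since the selected Hermite--Birkhoff functionals are linearly independent (equivalently, by strict positive definiteness of $k$ and distinctness of the pairs $(x_i,\ell_i)$), $w_m \notin V_m$, hence $g_m \ne 0$ and $b_m > 0$; if instead $e_m = 0$ there is nothing to prove. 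From $w_m = \Pi_m w_m + g_m$ with $\Pi_m w_m \in V_m$ we obtain the orthogonal decomposition $V_{m+1} = V_m \oplus \Span\{g_m\}$, and in particular $V_m \subset V_{m+1}$, so $\Pi_{m+1}\Pi_m = \Pi_m$.

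First I would derive the update for $s_{m+1}$. Decomposing $u = s_m + e_m$ and using $\Pi_{m+1} s_m = s_m$,
\[
s_{m+1} = \Pi_{m+1}u = s_m + \Pi_{m+1}e_m .
\]
Since $e_m = (I-\Pi_m)u \perp V_m$, the element $\Pi_{m+1}e_m$ lies in $V_{m+1}\cap V_m^{\perp} = \Span\{g_m\}$; writing $\Pi_{m+1}e_m = c\,g_m$ and pairing with $g_m$, using self-adjointness of $\Pi_{m+1}$ and $\Pi_{m+1}g_m = g_m$,
\[
c\,b_m^2 = \ip{\Pi_{m+1}e_m}{g_m} = \ip{e_m}{\Pi_{m+1}g_m} = \ip{e_m}{g_m},
\]
so $c = \ip{e_m}{g_m}/b_m^2$, which is precisely the stated formula once $g_m = (I-\Pi_m)\partial^{(2)}_{\ell_{m+1}}k(\cdot,x_{m+1})$ is recalled. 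Moreover $\ip{e_m}{g_m} = \ip{e_m}{(I-\Pi_m)w_m} = \ip{e_m}{w_m} = \partial_{\ell_{m+1}}e_m(x_{m+1})$, the middle equality since $e_m\perp\Pi_m w_m\in V_m$ and the last by the derivative reproducing property \eqref{eqn:rep_deriv}; hence $|\ip{e_m}{g_m}| = a_m$, consistent with the definition of $a_m$.

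For the norm identity I would invoke the Pythagorean theorem for the orthogonal splitting $e_m = e_{m+1} + \Pi_{m+1}e_m$, where $e_{m+1} = u - s_{m+1} = (I-\Pi_{m+1})e_m$:
\[
\norm{e_m}^2 = \norm{e_{m+1}}^2 + \norm{\Pi_{m+1}e_m}^2
= \norm{e_{m+1}}^2 + \frac{|\ip{e_m}{g_m}|^2}{b_m^4}\,\norm{g_m}^2
= \norm{e_{m+1}}^2 + \frac{a_m^2}{b_m^2},
\]
which rearranges to the claim. Everything here is routine; the only steps requiring a word of care are the nondegeneracy $b_m > 0$ (guaranteed by the standing linear-independence assumption on the selected functionals) and the identification $V_{m+1} = V_m \oplus \Span\{g_m\}$ as an orthogonal direct sum, from which both the nesting $\Pi_{m+1}\Pi_m = \Pi_m$ and the one-dimensionality of $V_{m+1}\cap V_m^{\perp}$ follow.
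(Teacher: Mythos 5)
Your proof is correct and follows essentially the same route as the paper: identify the one-dimensional orthogonal complement $\Span\{(I-\Pi_m)\partial^{(2)}_{\ell_{m+1}}k(\cdot,x_{m+1})\}$ of $V_m$ inside $V_{m+1}$ and exploit nested projections. Two cosmetic differences are worth noting. First, for the norm identity you invoke Pythagoras for the decomposition $e_m = e_{m+1} + \Pi_{m+1}e_m$, whereas the paper expands $\norm{e_m - \alpha_m w_m}^2$ directly; both give the same computation, and yours is marginally cleaner. Second, and more substantively, you dispose of the degeneracy $b_m = 0$ by appealing to linear independence of the selected Hermite--Birkhoff functionals, which in turn relies on fairly specific structural hypotheses on the kernel (translation invariance, integrability, smoothness, see the cited Wendland Theorem~16.4). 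The paper instead makes no such assumption inside the lemma and handles $b_m = 0$ explicitly as a trivial case ($w_m = 0 \Rightarrow V_{m+1} = V_m \Rightarrow e_{m+1} = e_m$, and $a_m = 0$ too), augmented by the stated convention that $0/0 = 0$. This is more robust: the lemma then holds verbatim even if the greedy procedure selects a pair $(x_{m+1},\ell_{m+1})$ whose representer already lies in $V_m$ (which can happen when the gradient residual vanishes identically without $e_m$ being zero). Your deflection ``if $e_m = 0$ there is nothing to prove'' does not cover that scenario. The fix is easy and matches the paper: drop the claim $b_m > 0$ and instead verify that both identities hold trivially when $b_m = 0$.
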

\noindent (We adopt the convention that if $b_i=0$ which implies $a_i=0$, the fraction is $0$.)
\begin{proof}
Recall $V_m=\operatorname{span}\{\partial^{(2)}_{\ell_i}k(\cdot,x_i):i=1,\dots,m\}$ and that
$s_m=\Pi_m u$ is the orthogonal projector of $u$ onto $V_m$, so $e_m=u-s_m=(I-\Pi_m)u$ and
$e_m\perp V_m$. Put
\[
w_m\ :=\ (I-\Pi_m) \partial^{(2)}_{\ell_{m+1}}k(\cdot,x_{m+1}) \in \Hk.
\]
Since $\Pi_m$ is an orthogonal projector ($\Pi_m=\Pi_m^*$ and $\Pi_m^2=\Pi_m$), for any
$v\in V_m$ we have $\Pi_m v=v$ and hence
\begin{align*}
\ip{w_m}{v}
&=\ip{\partial^{(2)}_{\ell_{m+1}}k(\cdot,x_{m+1})-\Pi_m\partial^{(2)}_{\ell_{m+1}}k(\cdot,x_{m+1})}{v} \\
&=\ip{\partial^{(2)}_{\ell_{m+1}}k(\cdot,x_{m+1})}{v}-\ip{\partial^{(2)}_{\ell_{m+1}}k(\cdot,x_{m+1})}{\Pi_m v}\\
&=0.
\end{align*}
Thus $w_m\perp V_m$ and therefore
\[
V_{m+1}\ =\ V_m\oplus \operatorname{span}\{w_m\}
\]
is an orthogonal direct sum.

\noindent If $b_m=\norm{w_m}=0$, then $w_m=0$, so $V_{m+1}=V_m$, $s_{m+1}=s_m$ and $e_{m+1}=e_m$. In this case
$a_m=|\ip{e_m}{w_m}|=0$, and both displayed identities hold trivially. Hence we may assume $b_m>0$
in the remainder.

\noindent Because $V_{m+1}$ is an orthogonal sum and $s_{m+1}$ is the orthogonal projection of $u$ onto
$V_{m+1}$, the standard formula for projection onto a one-dimensional orthogonal extension gives
\[
s_{m+1}\ =\ s_m+\alpha_m  w_m,
\qquad
\alpha_m\ :=\ \frac{\ip{u-s_m}{w_m}}{\norm{w_m}^2}
\ =\ \frac{\ip{e_m}{w_m}}{b_m^2}.
\]
This yields the asserted update formula for $s_{m+1}$.

\noindent For the projection error identity, compute
\[
e_{m+1}\ =\ u-s_{m+1}\ =\ u-\left(s_m+\alpha_m w_m\right)\ =\ e_m-\alpha_m w_m.
\]
Hence
\[
\begin{aligned}
\norm{e_{m+1}}^{2}
&= \norm{e_m-\alpha_m w_m}^{2}
= \norm{e_m}^{2}-2\alpha_m \ip{e_m}{w_m}+\alpha_m^{2} \norm{w_m}^{2} \\
&= \norm{e_m}^{2}-2 \frac{\ip{e_m}{w_m}}{\norm{w_m}^2} \ip{e_m}{w_m}
   +\left(\frac{\ip{e_m}{w_m}}{\norm{w_m}^2}\right)^{2} \norm{w_m}^{2} \\
&= \norm{e_m}^{2}-\frac{|\ip{e_m}{w_m}|^{2}}{\norm{w_m}^{2}}
=\ \norm{e_m}^{2}-\frac{a_m^{2}}{b_m^{2}},
\end{aligned}
\]
because $a_m=|\ip{e_m}{w_m}|$ by definition and
$b_m=\norm{w_m}$, i.e., $\norm{e_{m}}^2$ is nonincreasing.
\end{proof}
\noindent In the next lemma, we bound the geometric mean of the quotients
$|\partial_{\ell_{m+1}} e_m(x_{m+1})|/P_m(x_{m+1},\ell_{m+1})$ over a block.

% ---------------- Lemma 3 ----------------
\begin{lemma}\label{lem:amgm}
For any $m\ge 1$,
\[
\left[\ \prod_{i=m+1}^{2m}\frac{a_i}{b_i}\ \right]^{1/m}\ \le\ m^{-1/2}  \|e_{m+1}\|_{\Hk}.
\]

\end{lemma}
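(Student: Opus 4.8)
The plan is to combine the one-step error-decay identity from Lemma~\ref{lem:orth} with a telescoping argument over the block $i=m+1,\dots,2m$ and a single application of the arithmetic--geometric mean (AM--GM) inequality. The indices are chosen exactly so that the factor $1/m$ produced by AM--GM and the bound $\|e_{m+1}\|_{H_k(\Omega)}$ produced by telescoping combine into the asserted $m^{-1/2}\|e_{m+1}\|_{H_k(\Omega)}$.

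First I would rewrite Lemma~\ref{lem:orth} in the form $a_i^2/b_i^2=\|e_i\|_{H_k(\Omega)}^2-\|e_{i+1}\|_{H_k(\Omega)}^2$, valid for all $i\ge 0$ under the stated convention that the quotient is $0$ when $b_i=0$ (which also forces $e_{i+1}=e_i$, so the identity still holds). In particular every term is nonnegative, so $(\|e_i\|_{H_k(\Omega)}^2)_i$ is nonincreasing, and summing over the block telescopes:
\[
\sum_{i=m+1}^{2m}\frac{a_i^2}{b_i^2}
=\|e_{m+1}\|_{H_k(\Omega)}^2-\|e_{2m+1}\|_{H_k(\Omega)}^2
\le \|e_{m+1}\|_{H_k(\Omega)}^2 .
\]
Next I would apply AM--GM to the $m$ nonnegative numbers $a_i^2/b_i^2$, $i=m+1,\dots,2m$:
\[
\left(\prod_{i=m+1}^{2m}\frac{a_i^2}{b_i^2}\right)^{1/m}
\le \frac1m\sum_{i=m+1}^{2m}\frac{a_i^2}{b_i^2}
\le \frac1m\,\|e_{m+1}\|_{H_k(\Omega)}^2 .
\]
Since $a_i/b_i\ge 0$, the left-hand side equals $\bigl(\prod_{i=m+1}^{2m} a_i/b_i\bigr)^{2/m}$, so taking square roots gives $\bigl(\prod_{i=m+1}^{2m} a_i/b_i\bigr)^{1/m}\le m^{-1/2}\|e_{m+1}\|_{H_k(\Omega)}$, which is the claim.

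I do not expect any genuine obstacle here; the argument is essentially routine once Lemma~\ref{lem:orth} is in hand. The only point deserving a comment is the degenerate case $b_i=0$ for some $i$ in the block: there the $f$-greedy rule selects $(x_{i+1},\ell_{i+1})$ maximizing $|\partial_\ell e_i(x)|$, and $a_i\le\|e_i\|_{H_k(\Omega)}b_i=0$ by Cauchy--Schwarz (Lemma~\ref{lem:pf}), so all first derivatives of $e_i$ vanish and hence $a_j=0$ for every $j\ge i$; under the convention the product on the left is then $0$ and the inequality is trivial. Otherwise all $b_i>0$ and the displayed chain of inequalities applies verbatim.
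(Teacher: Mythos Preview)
Your proposal is correct and follows essentially the same route as the paper: rewrite Lemma~\ref{lem:orth} as $a_i^2/b_i^2=\|e_i\|_{H_k(\Omega)}^2-\|e_{i+1}\|_{H_k(\Omega)}^2$, telescope over $i=m+1,\dots,2m$, apply AM--GM to the $m$ nonnegative terms, and take square roots. Your treatment of the degenerate case $b_i=0$ is slightly more detailed than the paper's (which simply notes the product is then $0$), but otherwise the arguments coincide.
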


\begin{proof}
From Lemma~\ref{lem:orth}, for every $i\ge 0$,
\[
\|e_{i+1}\|_{\Hk}^{2}=\|e_i\|_{\Hk}^{2}-\frac{a_i^{2}}{b_i^{2}}.
\]
Set $t_i:=a_i^{2}/b_i^{2}\ge 0$ for $i=m+1,\dots,2m$. Summing gives
\[
\sum_{i=m+1}^{2m} t_i
=\|e_{m+1}\|_{\Hk}^{2}-\|e_{2m+1}\|_{\Hk}^{2}
\ \le\ \|e_{m+1}\|_{\Hk}^{2}.
\]
By the Arithmetic mean--geometric mean (AM--GM) inequality applied to $t_{m+1},\dots,t_{2m}$,
\[
\left(\ \prod_{i=m+1}^{2m} t_i\ \right)^{1/m}
\le \frac{1}{m}\sum_{i=m+1}^{2m} t_i
\le \frac{\|e_{m+1}\|_{\Hk}^{2}}{m}.
\]
Taking square roots yields
\[
\left(\ \prod_{i=m+1}^{2m} \frac{a_i^{2}}{b_i^{2}}\ \right)^{1/(2m)}
\ \le\ m^{-1/2}  \|e_{m+1}\|_{\Hk}.
\]
Since
\(
\left(\prod_{i=m+1}^{2m} \frac{a_i^{2}}{b_i^{2}}\right)^{1/(2m)}
=\left[\prod_{i=m+1}^{2m}\frac{a_i}{b_i}\right]^{1/m},
\)
the claim follows. If some $b_i=0$, then $a_i=0$ and the product on the left is $0$,
so the inequality is trivial.
\end{proof}

\noindent Next, we combine the three lemmas to finally prove \Cref{thm:HB-mean} and bound the geometric mean of the maximum derivative errors,
i.e., of $\max_{\ell\in\mathcal J}\ \sup_{x\in \Omega} |\partial_\ell e_i(x)|$.

\begin{proof}[Proof of \hypertarget{proof:HB-mean}{Theorem \ref{thm:HB-mean}}]
Fix $m\ge 1$ and define $a_i, b_i$ as in Lemma~\ref{lem:orth}. \\ 
\noindent By Lemma~\ref{lem:orth}, $\|e_{i+1}\|_{\Hk}^2=\|e_i\|_{\Hk}^2-a_i^2/b_i^2\le \|e_i\|_{\Hk}^2$,
hence $\|e_i\|_{\Hk}\le \|e_{m+1}\|_{\Hk}$ for all $i\in\{m+1,\dots,2m\}$.
Lemma~\ref{lem:amgm} gives
\begin{equation}\label{eq:block-amgm}
\Bigg(\ \prod_{i=m+1}^{2m}\frac{a_i}{b_i}\ \Bigg)^{1/m}\ \le\ m^{-1/2}  \|e_{m+1}\|_{\Hk}.
\end{equation}
For each $i\in\{m+1,\dots,2m\}$, the $f$-greedy rule picks $(x_{i+1},\ell_{i+1})$
with
\[
a_i=|\partial_{\ell_{i+1}} e_i(x_{i+1})|
=\max_{\ell\in\mathcal J}\ \sup_{x\in\Omega} |\partial_\ell e_i(x)|.
\]
Thus
\begin{align}
\Bigg(\prod_{i=m+1}^{2m}\max_{\ell \in \mathcal J}\sup_x |\partial_\ell e_i(x)|\Bigg)^{1/m}
&=\Bigg(\prod_{i=m+1}^{2m} a_i\Bigg)^{1/m}
=\Bigg(\prod_{i=m+1}^{2m} \frac{a_i}{b_i}\Bigg)^{1/m}
   \Bigg(\prod_{i=m+1}^{2m} b_i\Bigg)^{1/m} \notag \\ 
   &\le\ m^{-1/2} \|e_{m+1}\|_{H_k(\Omega)} 
\left[\ \prod_{i=m+1}^{2m} P_i(x_{i+1},\ell_{i+1})\ \right]^{1/m}.\label{eq:HB-mean-res-error}
\end{align}

\noindent Let $u_i := \max_{\ell \in \mathcal J}\sup_{x\in\Omega}|\partial_\ell e_i(x)| \ge 0$. Then
\[
\min_{m+1\le i\le 2m} u_i
= \Bigg(\prod_{i=m+1}^{2m} \min_{m+1\le i\le 2m} u_i\Bigg)^{ 1/m}
\le \Bigg(\prod_{i=m+1}^{2m} u_i\Bigg)^{ 1/m},
\]
and thus, by \eqref{eq:HB-mean-res-error},
\[
\min_{m+1\le i\le 2m}\ \max_{\ell \in \mathcal J}\ \sup_{x\in\Omega}|\partial_\ell e_i(x)|
\ \le\ m^{-1/2} \|e_{m+1}\|_{H_k(\Omega)} 
\left[\ \prod_{i=m+1}^{2m} P_i(x_{i+1},\ell_{i+1})\ \right]^{1/m}.
\]

\noindent
Using
\[
\|\nabla e_i(x)\|_2 \le \sqrt n\ \max_{\ell \in \mathcal J}|\partial_\ell e_i(x)| \quad\text{for all } x\in\Omega
\]
yields
\begin{equation}\label{eq:grad-block}
\min_{m+1\le i\le 2m}\ \|\nabla e_i\|_{L^\infty(\Omega)}
\ \le\ \sqrt{n}  m^{-1/2} \|e_{m+1}\|_{H_k(\Omega)} \left[\ \prod_{i=m+1}^{2m} P_i(x_{i+1},\ell_{i+1})\ \right]^{1/m}.
\end{equation}

\end{proof}


\begin{thebibliography}{38}
\providecommand{\natexlab}[1]{#1}
\providecommand{\url}[1]{\texttt{#1}}
\expandafter\ifx\csname urlstyle\endcsname\relax
  \providecommand{\doi}[1]{doi: #1}\else
  \providecommand{\doi}{doi: \begingroup \urlstyle{rm}\Url}\fi

\bibitem[da~Silva(2008)]{daSilva2008}
Ana~Cannas da~Silva.
\newblock \emph{{L}ectures on {S}ymplectic {G}eometry}.
\newblock Springer Berlin Heidelberg, Berlin, Heidelberg, 2008.
\newblock ISBN 978-3-540-45330-7.
\newblock \doi{10.1007/978-3-540-45330-7}.

\bibitem[Hairer et~al.(2006)Hairer, Hochbruck, Iserles, and Lubich]{hairer2006}
Ernst Hairer, Marlis Hochbruck, Arieh Iserles, and Christian Lubich.
\newblock Geometric numerical integration.
\newblock \emph{Oberwolfach Reports}, 3\penalty0 (1):\penalty0 805--882, 2006.
\newblock \doi{10.4171/OWR/2006/14}.

\bibitem[Carlberg et~al.(2019)Carlberg, Jameson, Kochenderfer, Morton, Peng,
  and Witherden]{carlberg2019recovering}
Kevin~T. Carlberg, Antony Jameson, Mykel~J. Kochenderfer, Jeremy Morton, Liqian
  Peng, and Freddie~D Witherden.
\newblock Recovering missing {CFD} data for high-order discretizations using
  deep neural networks and dynamics learning.
\newblock \emph{J. Comput. Phys.}, 395:\penalty0 105--124, 2019.
\newblock \doi{10.1016/j.jcp.2019.05.041}.

\bibitem[Schölkopf and Smola(2001)]{scholkopf2002learning}
Bernhard Schölkopf and Alexander~J. Smola.
\newblock \emph{Learning with Kernels: Support Vector Machines, Regularization,
  Optimization, and Beyond}.
\newblock The MIT Press, Cambridge, MA, 2001.
\newblock \doi{10.7551/mitpress/4175.001.0001}.

\bibitem[Döppel et~al.(2024)Döppel, Wenzel, Herkert, Haasdonk, and
  Votsmeier]{doeppel2024goal}
Felix Döppel, Tizian Wenzel, Robin Herkert, Bernard Haasdonk, and Martin
  Votsmeier.
\newblock {G}oal-{O}riented {T}wo-{L}ayered {K}ernel {M}odels as {A}utomated
  {S}urrogates for {S}urface {K}inetics in {R}eactor {S}imulations.
\newblock \emph{Chemie Ingenieur Technik}, 96:\penalty0 759--768, 2024.
\newblock \doi{10.1002/cite.202300178}.

\bibitem[Deisenroth et~al.(2013)Deisenroth, Fox, and
  Rasmussen]{deisenroth2013gaussian}
Marc~Peter Deisenroth, Dieter Fox, and Carl~Edward Rasmussen.
\newblock Gaussian processes for data-efficient learning in robotics and
  control.
\newblock \emph{IEEE transactions on pattern analysis and machine
  intelligence}, 37\penalty0 (2):\penalty0 408--423, 2013.
\newblock \doi{10.1109/TPAMI.2013.218}.

\bibitem[Zhong et~al.(2019)Zhong, Wang, and Bi]{zhong2019implicit}
De-Yun Zhong, Li-Guan Wang, and Lin Bi.
\newblock Implicit surface reconstruction based on generalized radial basis
  functions interpolant with distinct constraints.
\newblock \emph{Applied Mathematical Modelling}, 71:\penalty0 408--420, 2019.

\bibitem[La~Rocca and Power(2008)]{la2008double}
Antonino La~Rocca and Henry Power.
\newblock A double boundary collocation hermitian approach for the solution of
  steady state convection--diffusion problems.
\newblock \emph{Computers \& Mathematics with applications}, 55\penalty0
  (9):\penalty0 1950--1960, 2008.

\bibitem[De~Marchi et~al.(2018)De~Marchi, Iske, and Santin]{de2018image}
Stefano De~Marchi, Armin Iske, and Gabriele Santin.
\newblock Image reconstruction from scattered radon data by weighted positive
  definite kernel functions.
\newblock \emph{Calcolo}, 55\penalty0 (1):\penalty0 2, 2018.

\bibitem[Ehring and Haasdonk(2024)]{ehring2024hermite}
Tobias Ehring and Bernard Haasdonk.
\newblock Hermite kernel surrogates for the value function of high-dimensional
  nonlinear optimal control problems.
\newblock \emph{Advances in Computational Mathematics}, 50\penalty0
  (3):\penalty0 36, 2024.
\newblock \doi{10.1007/s10444-024-10128-5}.

\bibitem[Wendland(2004)]{wendland_2004}
Holger Wendland.
\newblock \emph{Scattered {D}ata {A}pproximation}.
\newblock Cambridge Monographs on Applied and Computational Mathematics.
  Cambridge University Press, Cambridge, UK, 2004.
\newblock \doi{10.1017/CBO9780511617539}.

\bibitem[Greydanus et~al.(2019)Greydanus, Dzamba, and
  Yosinski]{greydanus2019hamiltonian}
Samuel Greydanus, Misko Dzamba, and Jason Yosinski.
\newblock {H}amiltonian neural networks.
\newblock \emph{Advances in Neural Information Processing Systems}, 32, 2019.
\newblock \doi{10.48550/arXiv.1906.01563}.

\bibitem[David and Méhats(2023)]{DAVID2023112495}
Marco David and Florian Méhats.
\newblock Symplectic learning for {H}amiltonian neural networks.
\newblock \emph{Journal of Computational Physics}, 494:\penalty0 112495, 2023.
\newblock ISSN 0021-9991.
\newblock \doi{10.1016/j.jcp.2023.112495}.

\bibitem[Bertalan et~al.(2019)Bertalan, Dietrich, Mezi{\'c}, and
  Kevrekidis]{bertalan2019learning}
Tom Bertalan, Felix Dietrich, Igor Mezi{\'c}, and Ioannis~G Kevrekidis.
\newblock On learning {H}amiltonian systems from data.
\newblock \emph{Chaos: An Interdisciplinary Journal of Nonlinear Science},
  29\penalty0 (12), 2019.
\newblock \doi{10.1063/1.5128231}.

\bibitem[Chen et~al.(2020)Chen, Zhang, Arjovsky, and
  Bottou]{Chen2020Symplectic}
Zhengdao Chen, Jianyu Zhang, Martin Arjovsky, and Léon Bottou.
\newblock Symplectic recurrent neural networks.
\newblock In \emph{International Conference on Learning Representations}, 2020.
\newblock \doi{10.48550/arXiv.1909.13334}.

\bibitem[Zhong et~al.(2020)Zhong, Dey, and Chakraborty]{zhong2019symplectic}
Yaofeng~Desmond Zhong, Biswadip Dey, and Amit Chakraborty.
\newblock Symplectic {ODE}-{N}et: Learning {H}amiltonian dynamics with control.
\newblock In \emph{In International Conference on Learning Representations},
  2020.
\newblock \doi{10.48550/arXiv.1909.12077}.

\bibitem[Jin et~al.(2020)Jin, Zhang, Zhu, Tang, and
  Karniadakis]{jin2020sympnets}
Pengzhan Jin, Zhen Zhang, Aiqing Zhu, Yifa Tang, and George~Em Karniadakis.
\newblock {S}ymp{N}ets: Intrinsic structure-preserving symplectic networks for
  identifying {H}amiltonian systems.
\newblock \emph{Neural Networks}, 132:\penalty0 166--179, 2020.
\newblock \doi{10.1016/j.neunet.2020.08.017}.

\bibitem[Burby et~al.(2020)Burby, Tang, and Maulik]{burby2020fast}
Joshua~William Burby, Qi~Tang, and Romit Maulik.
\newblock Fast neural {P}oincar{\'e} maps for toroidal magnetic fields.
\newblock \emph{Plasma Physics and Controlled Fusion}, 63\penalty0
  (2):\penalty0 024001, 2020.
\newblock \doi{10.1088/1361-6587/abcbaa}.

\bibitem[Chen and Tao(2021)]{chen2021data}
Renyi Chen and Molei Tao.
\newblock Data-driven prediction of general {H}amiltonian dynamics via learning
  exactly-symplectic maps.
\newblock In \emph{International conference on machine learning}, pages
  1717--1727. PMLR, 2021.
\newblock \doi{10.48550/arXiv.2103.05632}.

\bibitem[Horn et~al.(2024)Horn, Ulibarrena, Koren, and Zwart]{Horn2024}
Philipp Horn, Veronica~Saz Ulibarrena, Barry Koren, and Simon~Portegies Zwart.
\newblock A generalized framework of neural networks for {{H}am}iltonian
  systems.
\newblock \emph{Journal of Computational Physics}, 521:\penalty0 113536, 2024.
\newblock \doi{10.1016/j.jcp.2024.113536}.

\bibitem[Benner et~al.(2017)Benner, Ohlberger, Cohen, and
  Willcox]{benner2017model}
Peter Benner, Mario Ohlberger, Albert Cohen, and Karen Willcox.
\newblock \emph{Model reduction and approximation: theory and algorithms}.
\newblock SIAM, 2017.
\newblock \doi{10.1137/1.9781611974829}.

\bibitem[Benner et~al.(2020)Benner, Schilders, Grivet-Talocia, Quarteroni,
  Rozza, and Miguel~Silveira]{benner2020model}
Peter Benner, Wil Schilders, Stefano Grivet-Talocia, Alfio Quarteroni,
  Gianluigi Rozza, and Lu{\'\i}s Miguel~Silveira.
\newblock \emph{Model Order Reduction: Volume 2: Snapshot-Based Methods and
  Algorithms}.
\newblock De Gruyter, 2020.
\newblock \doi{10.1515/9783110671490}.

\bibitem[Santin and Haasdonk(2021)]{Santin2021VKOGA}
Gabriele Santin and Bernard Haasdonk.
\newblock Kernel methods for surrogate modeling.
\newblock In \emph{Model Order Reduction}, volume~2. De Gruyter, 2021.
\newblock \doi{10.1515/9783110498967-009}.

\bibitem[Wirtz and Haasdonk(2013)]{wirtz2013vectorial}
Daniel Wirtz and Bernard Haasdonk.
\newblock A vectorial kernel orthogonal greedy algorithm.
\newblock \emph{Dolomites Research Notes on Approximation}, 6:\penalty0
  83–100, 2013.
\newblock \doi{10.14658/PUPJ-DRNA-2013-Special_Issue-10}.

\bibitem[Gao et~al.(2010)Gao, Kwan, and Shi]{gao2010sparse}
Junbin Gao, Paul~W. Kwan, and Daming Shi.
\newblock Sparse kernel learning with {LASSO} and {B}ayesian inference
  algorithm.
\newblock \emph{Neural Networks}, 23\penalty0 (2):\penalty0 257--264, 2010.
\newblock \doi{10.1016/j.neunet.2009.07.001}.

\bibitem[Wenzel et~al.(2023)Wenzel, Santin, and Haasdonk]{wenzel2023analysis}
Tizian Wenzel, Gabriele Santin, and Bernard Haasdonk.
\newblock Analysis of target data-dependent greedy kernel algorithms:
  Convergence rates for $f$-, $f${\textperiodcentered}${P}$- and
  $f/{P}$-greedy.
\newblock \emph{Constructive Approximation}, 57\penalty0 (1):\penalty0 45--74,
  2023.
\newblock \doi{10.1007/s00365-022-09592-3}.

\bibitem[Santin et~al.(2018)Santin, Wittwar, and Haasdonk]{santin2018greedy}
Gabriele Santin, Dominik Wittwar, and Bernard Haasdonk.
\newblock Greedy regularized kernel interpolation.
\newblock \emph{preprint arXiv:1807.09575}, 2018.
\newblock \doi{10.48550/arXiv.1807.09575}.

\bibitem[Santin et~al.(2024)Santin, Wenzel, and Haasdonk]{santin2024optimality}
Gabriele Santin, Tizian Wenzel, and Bernard Haasdonk.
\newblock On the optimality of target-data-dependent kernel greedy
  interpolation in {S}obolev reproducing kernel {H}ilbert spaces.
\newblock \emph{SIAM Journal on Numerical Analysis}, 62\penalty0 (5):\penalty0
  2249--2275, 2024.
\newblock \doi{10.1137/23M1587956}.

\bibitem[Schaback(2019)]{schaback2019greedy}
Robert Schaback.
\newblock A greedy method for solving classes of {PDE} problems.
\newblock \emph{preprint arXiv:1903.11536}, 2019.
\newblock \doi{10.48550/arXiv.1903.11536}.

\bibitem[Albrecht and Iske(2025)]{albrecht2025convergence}
Kristof Albrecht and Armin Iske.
\newblock On the convergence of generalized kernel-based interpolation by
  greedy data selection algorithms.
\newblock \emph{BIT Numerical Mathematics}, 65\penalty0 (1):\penalty0 5, 2025.
\newblock \doi{10.1007/s10543-024-01048-3}.

\bibitem[Wenzel et~al.(2025)Wenzel, Winkle, Santin, and
  Haasdonk]{wenzel2025adaptive}
Tizian Wenzel, Daniel Winkle, Gabriele Santin, and Bernard Haasdonk.
\newblock Adaptive meshfree approximation for linear elliptic partial
  differential equations with {PDE}-greedy kernel methods.
\newblock \emph{BIT Numerical Mathematics}, 65\penalty0 (1):\penalty0 11, 2025.
\newblock \doi{10.1007/s10543-025-01053-0}.

\bibitem[Cline(2017)]{cline2017variational}
Douglas Cline.
\newblock \emph{Variational principles in classical mechanics}.
\newblock University of Rochester River Campus Librarie, 2017.

\bibitem[Hairer and Lubich(2012)]{hairer2012numerical}
Ernst Hairer and Christian Lubich.
\newblock Numerical solution of ordinary differential equations.
\newblock \emph{The Princeton companion to applied mathematics}, pages
  293--305, 2012.

\bibitem[Chyba et~al.(2009)Chyba, Hairer, and Vilmart]{chyba2009role}
Monique Chyba, Ernst Hairer, and Gilles Vilmart.
\newblock The role of symplectic integrators in optimal control.
\newblock \emph{Optimal control applications and methods}, 30\penalty0
  (4):\penalty0 367--382, 2009.

\bibitem[Csillik(2004)]{csillik2004symplectic}
Iharka Csillik.
\newblock Symplectic and regularization methods.
\newblock \emph{Technische Mechanik-European Journal of Engineering Mechanics},
  24\penalty0 (1):\penalty0 67--73, 2004.

\bibitem[Peng and Mohseni(2016)]{Peng2016}
Liqian Peng and Kamran. Mohseni.
\newblock {S}ymplectic {M}odel {R}eduction of {H}amiltonian {S}ystems.
\newblock \emph{SIAM Journal on Scientific Computing}, 38\penalty0
  (1):\penalty0 A1--A27, 2016.
\newblock \doi{10.1016/j.cma.2023.116402}.

\bibitem[Buchfink et~al.(2019)Buchfink, Bhatt, and Haasdonk]{Buchfink2019}
Patrick Buchfink, Ashish Bhatt, and Bernard Haasdonk.
\newblock Symplectic {M}odel {O}rder {R}eduction with {N}on-{O}rthonormal
  {B}ases.
\newblock \emph{Mathematical and Computational Applications}, 24\penalty0 (2),
  2019.
\newblock ISSN 2297-8747.
\newblock \doi{10.3390/mca24020043}.

\bibitem[Afkham and Hesthaven(2017)]{afkham2017structure}
Babak~Maboudi Afkham and Jan~S. Hesthaven.
\newblock Structure preserving model reduction of parametric {H}amiltonian
  systems.
\newblock \emph{SIAM Journal on Scientific Computing}, 39\penalty0
  (6):\penalty0 A2616--A2644, 2017.
\newblock \doi{10.1137/17M1111991}.

\end{thebibliography}
\end{document}